\let\intertext\shortintertext
\theoremstyle{plain}
\newtheorem{theorem}{Theorem}[section]
\newtheorem{proposition}[theorem]{Proposition}
\newtheorem{lemma}[theorem]{Lemma}
\newtheorem{corollary}[theorem]{Corollary}
\theoremstyle{definition}
\newtheorem{example}[theorem]{Example}
\newtheorem{remark}[theorem]{Remark}
\newtheorem{assumption}[theorem]{Assumption}
\theoremstyle{remark}
\numberwithin{equation}{section}
\DeclareMathAlphabet\mathbfit{OML}{cmm}{b}{it}
\def\Z{\mathbb Z}
\def\Q{\mathbb Q}
\def\CP{\mathbb{CP}}
\DeclareMathOperator{\Hom}{Hom}
\DeclareMathOperator{\Ext}{Ext}
\DeclareMathOperator{\Tor}{Tor}
\DeclareMathOperator{\im}{im}
\let\MFsm\setminus
\def\setminus{\mathbin{\mkern-2mu\MFsm\mkern-2mu}}
\def\HH{H}
\def\OO{\mathcal O}
\def\II{\mathcal I}
\def\DD{\mathcal D}
\def\III{\mathbb I}
\def\kk{\Bbbk}
\def\dirlim{\mathop{\underrightarrow\lim}}
\DeclareMathOperator{\depth}{depth}
\DeclareMathOperator{\ann}{ann}
\def\pair#1{{\langle#1\rangle}}
\def\HT{H_{T}}
\def\CT{C_{T}}
\def\veeHT{{\check H}_{T}}
\def\veeCT{{\check C}_{T}}
\def\veehCT{{\check C}^{T}}
\def\AB{AB}
\def\barAB{{\smash{\overline{AB\mathstrut}}\mathstrut}}
\def\m{\mathfrak m}
\def\hCT{C^{T\!}}
\def\hHT{H^{T\!}}
\def\hathHT{\hat{H}^{T\!}}
\def\hatCT{\hat{C}_{T}}
\def\hatHT{\hat{H}_{T}}
\def\dd{d}
\def\eg{\emph{e.\,g.}}
\def\ie{\emph{i.\,e.}}
\def\cf{\emph{cf.}}
\def\Cf{\emph{Cf.}}
\begin{document}

\title[Equivariant cohomology, syzygies and orbit structure]{Equivariant cohomology, syzygies \\ and orbit structure}
\author{Christopher Allday}
\address{Department of Mathematics, University of Hawaii,
  2565~McCarthy Mall, Honolulu, HI~96822, U.S.A.}
\email{chris@math.hawaii.edu}
\author{Matthias Franz}
\address{Department of Mathematics, University of Western Ontario,
      London, Ont.\ N6A\;5B7, Canada}
\email{mfranz@uwo.ca}
\author{Volker Puppe}
\address{Fachbereich Mathematik, Universität Konstanz, 78457 Konstanz, Germany}
\email{volker.puppe@uni-konstanz.de}
\thanks{M.\,F.\ was partially supported by an NSERC Discovery Grant.}

\hypersetup{pdfauthor=\authors}

\subjclass[2010]{Primary 55N91; secondary 13D02, 57P10}

\begin{abstract}
  Let \(X\) be a ``nice'' space with an action of a torus~\(T\).
  We consider the Atiyah--Bredon sequence of equivariant cohomology modules
  arising from the filtration of~\(X\) by orbit dimension.
  We show that a front piece of this 
  sequence is
  exact if and only if the \(H^{*}(BT)\)-module~%
  \(\HT^{*}(X)\) is a certain syzygy.
  Moreover, we express
  the cohomology of that sequence
  as an \(\Ext\)~module involving a suitably defined equivariant homology of~\(X\).

  One consequence is that the GKM~method for computing equivariant cohomology
  applies to a Poincaré duality space 
  if and only if 
  the equivariant Poincaré pairing is perfect.
\end{abstract}

\maketitle


\section{Introduction}

Consider an action of the torus~\(T\cong(S^{1})^{r}\) on a space~\(X\)
satisfying some mild conditions (stated in
Assumption~\ref{ass:orbit-filtration}).
Let \(X_{i}\) be the \(T\)-equivariant \(i\)-skeleton of~\(X\), \ie, the union of all orbits
of dimension at most~\(i\).
By a result of Chang--Skjelbred~ \cite[Prop.~2.4]{ChangSkjelbred:1974},
the sequence 
\begin{equation}
  \label{eq:chang-skjelbred}
  \let\longrightarrow\rightarrow
  0
  \longrightarrow \HT^{*}(X)
  \longrightarrow \HT^{*}(X_0)
  \longrightarrow \HT^{*+1}(X_1, X_0)
\end{equation}
is exact if the equivariant cohomology~\(\HT^{*}(X)\) with rational coefficients
is free over the polynomial ring~\(R=H^{*}(BT)\).
Roughly at the same time,
Atiyah~\cite{Atiyah:1974} (in the context of equivariant \(K\)-theory)
and Bredon~\cite{Bredon:1974} (using Atiyah's method) proved
the stronger result that under the same hypothesis the following longer sequence is exact:
\begin{equation}\label{eq:atiyah-bredon}
  \let\longrightarrow\rightarrow
  0
  \longrightarrow \HT^{*}(X)
  \longrightarrow \HT^{*}(X_0)
  \longrightarrow \HT^{*+1}(X_1, X_0)
  \longrightarrow \cdots
  \longrightarrow \HT^{*+r}(X_r, X_{r-1})
  \longrightarrow 0.
\end{equation}
In recent years, this ``Atiyah--Bredon sequence'' has been studied by Franz--Puppe \cite{FranzPuppe:2007},~\cite{FranzPuppe:2011}
and Goertsches--Töben~\cite{GoertschesToeben:2010}. Moreover, it is implicit
in papers of De\,Con\-cini--Procesi--Vergne on transversally elliptic operators, splines
and the infinitesimal index~\cite{DeConciniProcesiVergne:2010}, \cite{DeConciniProcesiVergne:2011}.
It is also related to Schenck's work on splines and
equivariant Chow groups of toric varieties~\cite{Schenck:1997},~\cite{Schenck:2012}
and to the generalization of intersection cohomology for toric varieties
studied by Barthel--Brasselet--Fieseler--Kaup \cite{BarthelBrasseletFieselerKaup:2002}.

The assumption that \(\HT^{*}(X)\) be a free \(R\)-module
is known to hold for large classes of spaces,
including compact Hamiltonian \(T\)-manifolds
and rationally smooth, projective 
complex algebraic varieties with an algebraic action of the complexification of~\(T\),
\cf~\cite[Thm.~14.1]{GoreskyKottwitzMacPherson:1998}.
In all these cases
the ``Chang--Skjelbred sequence''~\eqref{eq:chang-skjelbred} provides a powerful way to compute \(\HT^{*}(X)\),
including the cup~product, out of data
related only to the fixed points and the one-dimensional orbits.
In the important special case where \(X_{1}\) is a finite union of \(2\)-spheres,
glued together at their poles,
this is often referred to as the ``GKM method'',
following work of Goresky--Kottwitz--MacPherson~\cite[Thm.~7.2]{GoreskyKottwitzMacPherson:1998}.
It should be noted that one only needs exactness of a very small part of the Atiyah--Bredon sequence
in order to apply this method.
This suggests that the sequence~\eqref{eq:chang-skjelbred}
might be exact under much weaker assumptions than the freeness of~\(\HT^{*}(X)\).

In this paper we address the following questions:
\begin{enumerate}
\item \label{qq1}
  Under which condition is the Chang--Skjelbred sequence exact? In particular:
  For which \(T\)-spaces does the GKM~method work?
\item If the Atiyah--Bredon sequence is not exact, what is the meaning
  of its cohomology?
\end{enumerate}

In fact, we look at a more general question than~\eqref{qq1}:
Under which condition is the Atiyah--Bredon sequence exact
from the left up to (and including) the \(i\)-th position,
\ie, up to the term~\(\HT^{*}(X_{i},X_{i-1})\)?
(We refer to~\(\HT^{*}(X)\) as position~\(i=-1\).)

To answer this question, we need a notion from commutative algebra.
A finitely generated \(R\)-module~\(M\) is called a \emph{\(j\)-th syzygy}
if there is an exact sequence
\begin{equation}
  0\to M\to F^{1}\to \dots \to F^{j}
\end{equation}
with finitely generated free \(R\)-modules~\(F^{1}\),~\ldots,~\(F^{j}\).
The first syzygies are exactly the torsion-free \(R\)-modules,
and the \(j\)-th syzygies with~\(j\ge r\) are the free modules, \cf~Section~\ref{sec:Torsion-freeness}.
The following, which is  part of Theorem \ref{thm:conditions-partial-exactness}, therefore
implies Atiyah--Bredon's result as well as its converse.

\begin{theorem}
  \label{thm:intro-partial-exactness}
  Let \(j\ge0\). Then
  the Atiyah--Bredon sequence~\eqref{eq:atiyah-bredon}
  is exact at all positions~\(i\le j-2\)
  if and only if
  \(\HT^{*}(X)\) is a \(j\)-th syzygy.  
\end{theorem}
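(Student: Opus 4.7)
My plan is to translate the exactness of the front piece of the Atiyah--Bredon sequence into a depth condition on \(\HT^{*}(X)\) at each homogeneous prime of \(R = H^{*}(BT)\), and then invoke the Evans--Griffith criterion which characterises \(j\)-th syzygies precisely by such a depth bound.

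The first step is to analyse the terms \(\AB^i := \HT^{*+i}(X_i,X_{i-1})\) as graded \(R\)-modules. Using excision and the localisation theorem, one decomposes \(\AB^i\) over the connected components of the open stratum \(X_i \setminus X_{i-1}\); each summand is free over the equivariant cohomology of its isotropy quotient, which is a polynomial quotient \(R/\mathfrak p\) where \(\mathfrak p\) has height~\(i\). Hence \(\AB^i\) is Cohen--Macaulay over \(R\) of Krull dimension \(r-i\), with all associated primes of height exactly~\(i\). In particular \(\AB^i_\mathfrak p = 0\) whenever \(\operatorname{ht}\mathfrak p < i\), and when \(\operatorname{ht}\mathfrak p \ge i\) the localisation \(\AB^i_\mathfrak p\) is a free module over a regular quotient of \(R_\mathfrak p\) of dimension \(\operatorname{ht}\mathfrak p - i\).

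Next, I would prove both implications by a dimension-shifting argument. If the truncated sequence
\[
  0 \to \HT^{*}(X) \to \AB^0 \to \AB^1 \to \cdots \to \AB^{j-2}
\]
is exact, split it into short exact sequences \(0 \to Z^k \to \AB^k \to Z^{k+1} \to 0\) with \(Z^0 = \HT^{*}(X)\) and localise at any homogeneous prime \(\mathfrak p\); iterative application of the depth lemma together with the Cohen--Macaulay bound on each \(\AB^k_\mathfrak p\) yields \(\depth \HT^{*}(X)_\mathfrak p \ge \min(j,\operatorname{ht}\mathfrak p)\), so by Evans--Griffith the module \(\HT^{*}(X)\) is a \(j\)-th syzygy. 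Conversely, assuming \(\HT^{*}(X)\) is a \(j\)-th syzygy, I would induct on \(j\): the base case \(j=1\) is injectivity of \(\HT^{*}(X) \to \HT^{*}(X_0)\), which by the localisation theorem is equivalent to torsion-freeness, and for the inductive step the depth bound forces the connecting maps in the localised complex to have the required kernel through position \(j-2\).

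The main obstacle is controlling depth uniformly at \emph{every} homogeneous prime, not just at the irrelevant ideal. Because \(\AB^i_\mathfrak p\) vanishes for \(i > \operatorname{ht}\mathfrak p\), the localised complex truncates automatically to length \(\operatorname{ht}\mathfrak p\), and the depth lemma must be applied with bounds that vary with \(\operatorname{ht}\mathfrak p\); keeping this bookkeeping consistent across all heights and simultaneously matching the induction on \(j\) is the delicate technical point. Once this local analysis is in place, the equivalence with the global syzygy condition follows cleanly from Evans--Griffith, and the \(j=r\) case recovers Atiyah--Bredon's original result together with its converse.
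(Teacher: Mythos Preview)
Your forward implication (exactness \(\Rightarrow\) syzygy) is sound and, notably, bypasses the paper's main machinery: the short exact sequences \(0\to Z^{k}\to\AB^{k}\to Z^{k+1}\to 0\), the Cohen--Macaulay property of each~\(\AB^{k}\) (Proposition~\ref{thm:stratum-cm}), and the inclusion \(Z^{j-1}\hookrightarrow\AB^{j-1}\) (which forces every associated prime of~\(Z^{j-1}\) to have height~\(j-1\)) do combine via the depth lemma and the Evans--Griffith criterion to give the result. The paper instead uses its identification \(H^{i}(\AB^{*}(X))=\Ext_{R}^{i}(\hHT_{*}(X),R)\) (Theorem~\ref{thm:exthab}) and dualises a free resolution of~\(\hHT_{*}(X)\), so your argument for this half is genuinely more elementary.

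The converse, however, has a real gap. A depth bound on~\(\HT^{*}(X)_{\p}\) does not by itself force exactness of the localised Atiyah--Bredon complex at the required positions: even when \(\HT^{*}(X)_{\p}\) is free over~\(R_{\p}\) you would need a \emph{local} Atiyah--Bredon theorem to conclude, and the depth lemma only transfers information from exactness to depth, not the other way. The paper's argument here is geometric rather than prime-by-prime. It passes through freeness of~\(\HT^{*}(X)\) over each subring~\(H^{*}(BT'')\) for rank-\(j\) quotient tori~\(T''\), and then inducts on~\(j\) by restricting the action to a corank-one subtorus~\(K\subset T\): for a suitably generic~\(a\in R\) with \(H^{*}(BK)=R/(a)\), the \(K\)-equivariant Atiyah--Bredon complex coincides with \(\barAB^{*}(X)/a\) through position~\(j-2\), so the inductive hypothesis for~\(K\) together with a universal-coefficient short exact sequence shows that \(a\) is a non-zero-divisor on~\(H^{j-2}(\barAB^{*}(X))\); a separate localisation argument (using Lemma~\ref{thm:exact-2}) shows every element of~\(H^{j-2}(\barAB^{*}(X))\) is killed by some such~\(a\), whence \(H^{j-2}(\barAB^{*}(X))=0\). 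This reduction to a smaller torus acting on the same space---not merely localisation at primes of~\(R\)---is the missing idea in your sketch.
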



To address the second question, we consider
a suitably defined \emph{equivariant homology}~\(\hHT_{*}(X)\) of~\(X\).
We stress that this is \emph{not} the homology of the Borel construction~\(X_{T}\), see Section~\ref{sec:equiv-homology}.
Equivariant Poincaré duality holds in the sense that
for a rational Poincaré duality space~\(X\) of formal dimension~\(n\)
capping with the equivariant fundamental class
gives an isomorphism~\(\HT^{*}(X)\to \hHT_{*}(X)\) of degree~\(-n\).
Moreover,
let \(H^{*}(\AB^{*}(X))\) be the cohomology
of the Atiyah--Bredon sequence~\eqref{eq:atiyah-bredon},
considered as a complex of \(R\)-modules and with the term~\(\HT^{*}(X)\)
omitted. Our main result,
Theorem~\ref{thm:exthab-ss}, implies 
that \(H^{*}(\AB^{*}(X))\) is completely determined by~\(\hHT_{*}(X)\):

\begin{theorem}
  \label{thm:intro-main}
  Let \(X\) be a \(T\)-space.
  For any~\(j\ge0\) there is an isomorphism of \(R\)-modules
  \begin{equation*}
    H^{j}(\AB^{*}(X)) = \Ext_{R}^{j}(\hHT_{*}(X),R).
  \end{equation*}
  In particular,
  if \(X\) is a rational Poincaré duality space of formal dimension~\(n\), then for any~\(j\ge0\)
  there is an isomorphism of \(R\)-modules of degree~\(-n\)
  \begin{equation*}
    H^{j}(\AB^{*}(X)) = \Ext_{R}^{j}(\HT^{*}(X),R).
  \end{equation*}
\end{theorem}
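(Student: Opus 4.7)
The plan is to realize both sides of the first isomorphism as outputs of a single spectral sequence attached to the orbit filtration of~$X$.

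I would first work at the chain level. Let $\hCT_*(X)$ be a complex of $R$-modules computing $\hHT_*(X)$ (as presumably constructed in Section~\ref{sec:equiv-homology}), equipped with a filtration by subcomplexes $\hCT_*(X_i)$ induced by the orbit filtration $X_0 \subset X_1 \subset \cdots \subset X_r = X$. The successive quotients compute the relative equivariant homologies $\hHT_*(X_i,X_{i-1})$. Replacing $R$ by an injective resolution $I^\bullet$, the double complex $\Hom_R(\hCT_*(X), I^\bullet)$ has total cohomology $\Ext_R^*(\hHT_*(X), R)$, and its filtration by the orbit-type columns yields a spectral sequence.

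The crucial local input is that for each~$i\ge 0$,
\begin{equation*}
  \Ext_R^j\bigl(\hHT_*(X_i, X_{i-1}),\, R\bigr) =
  \begin{cases}
    \HT^{*+i}(X_i, X_{i-1}) & j=i, \\ 0 & j\neq i.
  \end{cases}
\end{equation*}
On the open stratum $X_i \setminus X_{i-1}$ all stabilizers have rank $r-i$, so every associated prime of both $\HT^*(X_i,X_{i-1})$ and $\hHT_*(X_i,X_{i-1})$ has height~$i$ in~$R$. A direct analysis at fixed orbit type (reducing locally to a product of a torus quotient with fixed-point data) should show that these are Cohen--Macaulay modules of codimension~$i$ interchanged by $\Ext_R^i(-,R)$ --- a local duality or universal coefficient phenomenon.

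Granted this, the spectral sequence has $E_1$ page (up to reindexing) equal to the Atiyah--Bredon complex $\AB^*(X)$ and abutment $\Ext_R^*(\hHT_*(X), R)$. Because the local $\Ext$'s concentrate in a single degree, the sequence degenerates at $E_2$, and the edge homomorphisms yield the claimed isomorphism $H^j(\AB^*(X)) = \Ext_R^j(\hHT_*(X),R)$. The second statement follows immediately by substituting $\hHT_*(X)$ with $\HT^*(X)$ via the equivariant Poincaré duality isomorphism of degree~$-n$.

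The main obstacle is the local duality step: establishing the Cohen--Macaulay property of $\hHT_*(X_i,X_{i-1})$, identifying its unique nonvanishing $\Ext$ module with $\HT^{*+i}(X_i,X_{i-1})$, and, most delicately, checking that this identification carries the spectral sequence differential on~$E_1$ to the boundary map of the orbit filtration --- so that the $E_1$ complex is genuinely $\AB^*(X)$ with its Atiyah--Bredon differentials, not merely term-by-term. Everything else is a formal spectral sequence argument.
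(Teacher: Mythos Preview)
There is a genuine gap in the abutment claim. The total cohomology of~$\Hom_R(\hCT_*(X),I^\bullet)$ is $\HT^*(X)$, not $\Ext_R^*(\hHT_*(X),R)$: since $\hCT_*(X)$ is free over~$R$, the quasi-isomorphism $R\to I^\bullet$ is preserved by $\Hom_R(\hCT_*(X),-)$, and the cohomology of $\Hom_R(\hCT_*(X),R)$ is $\HT^*(X)$ (it is the double $R$-dual of~$\CT^*(X)$; see Lemma~\ref{quiso-Hom-R-I}). The $\Ext$~modules appear only as the $E_2$~page of the filtration by $I$-degree, not as the abutment.

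So what you actually have are two filtrations on the same complex, both converging to~$\HT^*(X)$: the orbit filtration~$\OO$ with $E_1=\AB^*(X)$, and the $I$-degree filtration~$\II$ with $E_2^p=\Ext_R^p(\hHT_*(X),R)$. The theorem amounts to showing that their $E_2$~pages coincide. Neither spectral sequence need degenerate at~$E_2$; the concentration of the local $\Ext$'s in a single degree only identifies each $E_1$~\emph{term} of~$\OO$ with an $\Ext$~module, and says nothing about higher differentials, so your degeneration claim is unfounded. The paper compares $\OO$ and~$\II$ by introducing a third ``diagonal'' filtration $\DD^p=\OO^p\cap\II^p$ and running auxiliary spectral sequences to show that the inclusions $\DD\hookrightarrow\OO$ and $\DD\hookrightarrow\II$ induce isomorphisms of spectral sequences from the $E_1$~and $E_2$~pages on, respectively. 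That diagonal comparison is exactly where the Cohen--Macaulay input (Proposition~\ref{thm:stratum-cm}) and the degeneration of the \emph{homological} orbit spectral sequence (Corollary~\ref{thm:hHT-orbit-degeneration}) are used. Without this device, knowing the columns of~$\AB^*(X)$ individually as $\Ext$~modules does not yield $H^j(\AB^*(X))=\Ext_R^j(\hHT_*(X),R)$.
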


The Atiyah--Bredon sequence is the \(E_{1}\)~page of the spectral sequence
induced by the equivariant skeletons~\(X_{i}\) and converging to~\(\HT^{*}(X)\).
Similarly, \(\Ext_{R}^{*}(\hHT_{*}(X),R)\) is the \(E_{2}\)~page
of a universal coefficient spectral sequence computing \(\HT^{*}(X)\) out of
the equivariant chains on~\(X\),
see Section~\ref{sec:universal-coeff}. In Theorem~\ref{thm:exthab-ss} we
actually prove
that these two spectral sequences are isomorphic
from the \(E_{2}\)~page on.

Let \(X\) be a rational Poincaré duality space.
Since the equivariant coefficient ring~\(R\) is not a field (unless \(r=0\)),
the 
isomorphism between \(\HT^{*}(X)\) and \(\hHT_{*}(X)\)
does not imply that the corresponding equivariant Poincaré pairing
\begin{equation}
    \label{eq:poincare-cup-equiv-intro}
  \HT^{*}(X) \otimes \HT^{*}(X) \to R
\end{equation}
is non-degenerate, let alone perfect.
For instance, one has~\(\HT^{*}(X)=\Q\) for~\(X=T\),
so that the map~\eqref{eq:poincare-cup-equiv-intro}
is trivial in this case.

Recall that
a (graded symmetric) \(R\)-bilinear pairing~%
\( 
  M\times M\to R
\) 
is called \emph{perfect} if it induces an isomorphism \(M\to\Hom_{R}(M,R)\).
Moreover,
an \(R\)-module~\(M\) is called \emph{reflexive}
if the canonical map
\begin{equation}
  M\to\Hom_{R}(\Hom_{R}(M,R),R)
\end{equation}
is an isomorphism.
Because reflexive \(R\)-modules are exactly the second syzygies,
our next result is an immediate consequence of
Theorems \ref{thm:intro-partial-exactness}~and~\ref{thm:intro-main}.
It essentially answers an open point raised by
Guillemin--Ginzburg--Karshon, 
see Remark~\ref{rem:ggk}.

\begin{corollary}
  Let \(X\) be a rational Poincaré duality space. Then the following are equivalent:
  \begin{enumerate}
  \item The Chang--Skjelbred sequence~\eqref{eq:chang-skjelbred} is exact.
  \item The \(R\)-module~\(\HT^{*}(X)\) is reflexive.
  \item The equivariant Poincaré pairing~\eqref{eq:poincare-cup-equiv-intro}
    is perfect.
  \end{enumerate}
\end{corollary}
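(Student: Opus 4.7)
The plan is to deduce all three equivalences from Theorems~\ref{thm:intro-partial-exactness} and~\ref{thm:intro-main}, together with the standard fact (recalled in Section~\ref{sec:Torsion-freeness}) that a finitely generated \(R\)-module is reflexive if and only if it is a second syzygy.

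For (1)~\(\Leftrightarrow\)~(2), I would observe that the Chang--Skjelbred sequence~\eqref{eq:chang-skjelbred} is precisely the truncation of the Atiyah--Bredon sequence~\eqref{eq:atiyah-bredon} to positions \(i\le 0\). Theorem~\ref{thm:intro-partial-exactness} applied with \(j=2\) then says that its exactness is equivalent to \(\HT^{*}(X)\) being a second syzygy, which by the reflexive/second-syzygy dictionary is exactly~(2).

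For the equivalence with~(3), write \(M=\HT^{*}(X)\) and \(M^{\vee}=\Hom_{R}(M,R)\). The Chang--Skjelbred map \(M\to \HT^{*}(X_{0})\) lands in the kernel of the next differential, which is by definition \(H^{0}(\AB^{*}(X))\); hence it factors through a natural map \(\delta\colon M\to H^{0}(\AB^{*}(X))\). Inspecting~\eqref{eq:chang-skjelbred} shows that (1) holds if and only if \(\delta\) is an isomorphism. Now the Poincaré-duality case of Theorem~\ref{thm:intro-main} at \(j=0\) gives an identification \(H^{0}(\AB^{*}(X))\cong\Ext^{0}_{R}(M,R)=M^{\vee}\). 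The composition \(M\xrightarrow{\delta} H^{0}(\AB^{*}(X))\cong M^{\vee}\) should coincide with the adjoint \(D\colon M\to M^{\vee}\) of the equivariant Poincaré pairing~\eqref{eq:poincare-cup-equiv-intro}, in which case (3) is by definition the statement that \(D\) is an isomorphism, so (1)~\(\Leftrightarrow\)~(3).

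The main obstacle is establishing the identification of \(\delta\) (post-composed with the Theorem~\ref{thm:intro-main} isomorphism) with the pairing adjoint~\(D\). This requires unwinding how the isomorphism \(H^{0}(\AB^{*}(X))\cong M^{\vee}\) is constructed: it arises from the universal-coefficient edge map of Section~\ref{sec:universal-coeff} composed with the equivariant Poincaré duality isomorphism \(\HT^{*}(X)\cong \hHT_{*}(X)\) (cap-product with the equivariant fundamental class). I would therefore verify that capping against the fundamental class intertwines the skeletal filtration used to define \(\AB^{*}(X)\) with the equivariant chain filtration entering the universal-coefficient spectral sequence; granting the expected naturality of both constructions, the identification \(\delta\leftrightarrow D\) should follow essentially formally, and the corollary then reduces to the two theorems.
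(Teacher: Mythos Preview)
Your argument is correct and matches the paper's approach: \((1)\Leftrightarrow(2)\) via Theorem~\ref{thm:intro-partial-exactness} with \(j=2\) and the reflexive/second-syzygy dictionary, and \((1)\Leftrightarrow(3)\) via the identification \(H^{0}(\AB^{*}(X))\cong\Hom_{R}(\HT^{*}(X),R)\) combined with equivariant Poincaré duality. The ``obstacle'' you flag---that the edge map \(\delta\) composed with the isomorphism of Theorem~\ref{thm:intro-main} really is the pairing adjoint \(D\)---is handled by the second sentence of Theorem~\ref{thm:exthab}, which explicitly states that under the isomorphism \(H^{0}(\AB^{*}(X))\cong\Hom_{R}(\hHT_{*}(X),R)\) the map \(\HT^{*}(X)\to H^{0}(\AB^{*}(X))\) corresponds to the canonical evaluation map; composing with the Poincaré duality isomorphism \(\hHT_{*}(X)\cong\HT^{*}(X)[-n]\) (cap with \(o_{T}\)) then gives \(\alpha\mapsto(\beta\mapsto\langle\alpha\cup\beta,o_{T}\rangle)=D(\alpha)\) on the nose, so no further unwinding of the spectral-sequence construction is needed.
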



For any~\(j\ge-1\) there are \(T\)-spaces 
such that the sequence~\eqref{eq:atiyah-bredon} is exact
at all positions~\(i\le j\),
but not further, see Section~\ref{sec:toric}.
The situation changes if one restricts to rational Poincaré duality spaces:
Allday~\cite{Allday:1985} has shown that
in this case torsion-freeness of~\(\HT^{*}(X)\) 
implies freeness if~\(r=2\), \ie, if \(T\cong S^{1}\times S^{1}\).
There are counterexamples for~\(r\ge3\) due to Franz and Puppe, \cf~Section~\ref{sec:mutants}.
The correct generalization of Allday's result 
is as follows. In the light of Theorem~\ref{thm:intro-partial-exactness}, our result
says roughly that
if the first half of the Atiyah--Bredon sequence is exact, then so is the rest:

\begin{corollary}
  Let \(X\) be a rational Poincaré duality space.
  If \(\HT^{*}(X)\) is a syzygy of order~\(\ge r/2\), then it is free over~\(R\).
\end{corollary}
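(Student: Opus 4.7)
Set $M=\HT^*(X)$ and $R=H^*(BT)$, let $r=\dim T$, and let $n$ be the formal dimension of~$X$.  The strategy combines the $\Ext$-interpretation of Atiyah--Bredon cohomology (Theorems~\ref{thm:intro-partial-exactness} and~\ref{thm:intro-main}), the Auslander--Buchsbaum formula, and the equivariant Poincaré self-duality $M\cong\Hom_R(M,R)$ of degree~$-n$ (from the preceding corollary), fed into the biduality spectral sequence of the regular ring~$R$.

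First I would extract vanishings of $\Ext^i_R(M,R)$.  By Theorem~\ref{thm:intro-partial-exactness}, $M$ being a $j$-th syzygy with $j\ge r/2$ gives $H^i(\AB^*(X))=0$ for $1\le i\le j-2$, which Theorem~\ref{thm:intro-main} (in its Poincaré-duality form) identifies with $\Ext^i_R(M,R)$; thus $\Ext^i_R(M,R)=0$ in that range.  Since a $j$-th syzygy over the polynomial ring~$R$ has $\depth_R M\ge j$, the Auslander--Buchsbaum formula gives $\mathrm{pd}_R M\le r-j$, so $\Ext^i_R(M,R)=0$ for $i>r-j$ as well.  Combining the two, $\Ext^i_R(M,R)$ can be non-zero only for $i\in\{0\}\cup[j-1,r-j]$.

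Next, because $j\ge r/2\ge 2$ (the cases $r\le 2$ are handled by Allday's original argument or are elementary over a PID), $M$ is reflexive, and the preceding corollary then yields $M\cong\Hom_R(M,R)$ of degree~$-n$, i.e.\ an isomorphism of $R$-modules $M\cong M^{*}$.  I feed this self-duality into the biduality spectral sequence
\[
  E_2^{p,q}=\Ext^p_R(\Ext^q_R(M,R),R),
\]
which over the regular ring~$R$ converges to $M$ concentrated on the diagonal $p-q=0$ (and to zero off of it), and which by Ischebeck's inequality satisfies $E_2^{p,q}=0$ for $p<q$ (the module $\Ext^q_R(M,R)$ has grade~$\ge q$).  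On the row $q=0$ one has $E_2^{p,0}=\Ext^p_R(M^{*},R)\cong\Ext^p_R(M,R)$ by self-duality, while off-diagonal convergence forces $E_\infty^{p,0}=0$ for $p\ge 1$.  Out-going differentials from~$(p,0)$ vanish for dimensional reasons, while an in-coming differential $d_k\colon(p-k,k-1)\to(p,0)$ can be non-trivial only if both $\Ext^{k-1}_R(M,R)\ne 0$ (forcing $k\ge j$) and the grade-forced inequality $p\ge 2k-1$ hold, hence only if $p\ge 2j-1$.  Under the hypothesis $j\ge r/2$ this arrival zone satisfies $p\ge 2j-1\ge r-1$, whereas the potentially non-vanishing range of $\Ext^p_R(M,R)$ lies in $[j-1,r-j]\subseteq[r/2-1,r/2]$, which is disjoint from $[r-1,\infty)$ for $r\ge 3$.

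Consequently no differential touches~$(p,0)$ for $p$ in the relevant range, so $\Ext^p_R(M,R)\cong E_\infty^{p,0}=0$ for every $p\ge 1$; hence $\mathrm{pd}_R M=0$, and the finitely generated graded projective module $M$ is free over~$R$.  The main obstacle is the differential analysis in the third paragraph; the numerical input $j\ge r/2$ is used precisely to push the grade-forced arrival zone $p\ge 2j-1$ past the Auslander--Buchsbaum-truncated non-vanishing range $[j-1,r-j]$, so that the bottom row of the biduality spectral sequence survives to~$E_\infty$ and is then killed by convergence.
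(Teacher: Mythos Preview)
Your biduality spectral sequence argument has the differentials going the wrong way. In the spectral sequence
\(E_2^{p,q}=\Ext^p_R(\Ext^q_R(M,R),R)\Rightarrow M\)
(concentrated on the diagonal \(p=q\)), obtained from \(R\Hom_R(R\Hom_R(M,R),R)\simeq M\), the differentials are
\(d_r\colon E_r^{p,q}\to E_r^{p+r,\,q+r-1}\);
they \emph{increase} \(q\), because this is the fourth-quadrant hypercohomology spectral sequence \(\Ext^p(H^{-q}(M^\vee),R)\Rightarrow H^{p+q}\) reindexed via \(q\mapsto -q\). Hence the bottom row \(q=0\) has only \emph{outgoing} differentials, not incoming ones, and the vanishing \(E_\infty^{p,0}=0\) for \(p>0\) merely says that \(\Ext^p_R(M^*,R)\) is eventually killed by maps into higher rows, not that it is zero. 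The purely algebraic statement you are aiming at is in fact false: for \(r=3\) and \(j=2\), the second Koszul syzygy \(M=K_2\) over \(\kk[t_1,t_2,t_3]\) is a second syzygy with \(K_2\cong K_2^*\) up to a shift (Lemma~\ref{thm:Ext-Koszul}) and \(\mathrm{pd}_R K_2=1=r-j\), yet \(\Ext^1_R(K_2,R)=\kk\ne 0\), so \(K_2\) is not free even though all of your hypotheses (a), (b), (c) hold.

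The paper's proof (Proposition~\ref{thm:generalization-allday}) supplies the missing ingredient, which is genuinely topological rather than module-theoretic: Lemma~\ref{thm:exact-2} shows that the \emph{augmented} Atiyah--Bredon complex \(\barAB^*(X)\) is the \(E_1\)~page of a spectral sequence converging to~\(0\), so its cohomology cannot be concentrated in only one or two adjacent positions. Your Ext computations are correct and confine \(H^i(\barAB^*(X))\) to the adjacent positions \(i\in[j-1,r-j]\subset\{j-1,j\}\); it is Lemma~\ref{thm:exact-2}---not the biduality spectral sequence---that then forces exactness everywhere and hence freeness of \(\HT^*(X)\).
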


\medbreak

The paper is organized as follows:
In Section~\ref{sec:algebra} we collect some results from commutative algebra
that we will need later on. In particular, we discuss
Cohen--Macaulay modules, syzygies and the Koszul resolution.
In Section~\ref{sec:equiv-cohomology} we review the singular Cartan model
for \(T\)-equivariant cohomology and introduce equivariant homology.
As in the non-equivariant theory,
homology and cohomology are related via a universal coefficient theorem
and via Poincaré duality.
In Section~\ref{sec:main-result} we prove our main result,
the spectral sequence version of Theorem~\ref{thm:intro-main}.
Section~\ref{sec:applications} contains consequences of the main result,
in particular for the partial exactness of the Atiyah--Bredon sequence
and for Poincaré duality spaces.
We conclude with two examples in Section~\ref{sec:examples}.

\section{Algebraic background}
\label{sec:algebra}

\subsection{Standing assumptions}

Unless specified otherwise, we work over a ground field~\(\kk \) of arbitrary characteristic,
and all tensor products are taken over~\(\kk \).
The letter~\(R\) denotes
a polynomial ring in \(r\)~indeterminates of degree~\(\dd\ge1\) with coefficients in~\(\kk \),
and \(\m\lhd R\) its maximal graded ideal.
All \(R\)-modules are assumed to be graded.
We consider \(\kk \) as an \(R\)-module (concentrated in degree~\(0\)) via the canonical augmentation.
For an \(R\)-module~\(M\) and an~\(l\in\Z\)
the notation~\(M[l]\) denotes a degree shift by~\(l\), so that the degree~\(l'\)~piece of~\(M[l]\)
is the degree~\(l'-l\)~piece of~\(M\).
We write ``\(\subset\)'' for (not necessarily proper) inclusion of sets.

\smallskip

In the following subsections we review some notions from commutative algebra.
Apart from the references given below,
the reader might also
find the summary of results 
in~\cite[App.~A]{AlldayPuppe:1993} helpful.
They were compiled with applications in equivariant cohomology in mind.

All \(R\)-modules are assumed to be finitely generated for the rest of this section.

\subsection{Cohen--Macaulay modules}

Let \(M\) be an \(R\)-module.
A sequence \(a_{1}\),~\ldots,~\(a_{j}\) in~\(R\) is \emph{\(M\)-regular}
if \(a_{i}\) is not a zero-divisor in~\(M/(a_{1},\dots,a_{i-1})M\)
for~\(1\le i\le j\).
If an \(M\)-regular sequence of length~\(r\) exists, then \(M\) is free over~\(R\).

Assume \(M\ne 0\). The depth of~\(M\)
is the common length of all maximal \(M\)-regular sequences.
One always has
\begin{equation}
  \depth M \le \dim M,
\end{equation}
where \(\dim M\) is the Krull dimension of the ring~\(R/{\ann M}\).
If equality holds, one calls \(M\) \emph{Cohen--Macaulay}.
We will often make use of the following characterization,
\cf~the proof of~\cite[Prop.~A1.16]{Eisenbud:2005}.

\begin{proposition}
  \label{thm:dim-depth-ext}
  Let \(M\) be a non-zero \(R\)-module.
  \begin{enumerate} 
    \item \(\dim M\) is the largest integer~\(i\) such that \(\Ext_{R}^{r-i}(M,R)\ne0\).
    \item \(\depth M\) is the smallest integer~\(i\) such that \(\Ext_{R}^{r-i}(M,R)\ne0\).
    \item \(M\) is Cohen--Macaulay of dimension~\(j\) if and only if \(\Ext_{R}^{r-i}(M,R)=0\) for all~\(i\ne j\).
  \end{enumerate}
\end{proposition}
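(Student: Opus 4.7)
The plan is to deduce all three parts from standard invariants of a finitely generated graded module over the polynomial ring~\(R\), in the spirit of the argument in Eisenbud's book cited just before the statement. Since \(R\) is regular of Krull dimension~\(r\), every finitely generated graded \(M\) admits a finite minimal graded free resolution \(F_\bullet \to M\) of length \(\operatorname{pd}_R M \le r\), so the set
\begin{equation*}
  S(M) = \bigl\{\,k\ge 0 \,:\, \Ext_R^{k}(M,R)\ne 0\,\bigr\}
\end{equation*}
is a subset of \(\{0,\dots,r\}\). The task is to identify its maximum with \(r-\depth M\) and its minimum with \(r-\dim M\); (3) is then immediate.

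First I would show \(\max S(M) = \operatorname{pd}_R M\). Apply \(\Hom_R(-,R)\) to the minimal resolution. By minimality the differentials of \(F_\bullet\) have entries in~\(\m\), so the same holds for the dual complex \(\Hom_R(F_\bullet,R)\); Nakayama's lemma then forces the cokernel at the top spot to be non-zero, giving \(\Ext_R^{\operatorname{pd}_R M}(M,R)\ne 0\), while higher \(\Ext\)'s vanish trivially. Combined with the Auslander--Buchsbaum formula \(\operatorname{pd}_R M + \depth M = \depth R = r\), this yields \(\max S(M) = r - \depth M\), which is exactly part~(2).

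Next I would identify \(\min S(M) = r - \dim M\). The cleanest route is graded local duality: since \(R\) is Gorenstein, \(\Ext_R^{k}(M,R)\) is Matlis-dual, up to a degree shift, to the local cohomology module \(H_{\m}^{r-k}(M)\); by Grothendieck's vanishing theorem \(H_\m^{j}(M)\) is non-zero precisely for \(j\in[\depth M,\dim M]\). Translating back, \(S(M)\) is an interval with endpoints \(r-\dim M\) and \(r-\depth M\), and in particular \(\min S(M)=r-\dim M\), proving~(1). (An alternative path sidesteps local duality: \(\min S(M)\) is by definition the grade of \(M\) on~\(R\), and since \(R\) is Cohen--Macaulay and catenary one has \(\operatorname{grade} M = \operatorname{height}(\ann M) = r-\dim M\).)

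Finally, part~(3) drops out formally: \(M\) is Cohen--Macaulay of dimension~\(j\) iff \(\depth M = \dim M = j\), iff the interval of indices \(i\) with \(\Ext_R^{r-i}(M,R)\ne 0\) degenerates to the single point~\(\{j\}\). The only genuinely non-trivial input is the determination of \(\min S(M)\); both the local-duality and the grade-height arguments there are standard, but this is the step that really uses that \(R\) is Cohen--Macaulay (indeed Gorenstein). Everything else is minimal resolutions plus Auslander--Buchsbaum.
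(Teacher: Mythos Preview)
The paper does not supply its own proof of this proposition; it simply refers the reader to the proof of \cite[Prop.~A1.16]{Eisenbud:2005}. Your argument is correct and is essentially the standard one found there: Auslander--Buchsbaum together with the minimality of the resolution pins down the top nonvanishing \(\Ext\), and either local duality or the identity \(\operatorname{grade}(\ann M,R)=\operatorname{height}(\ann M)=r-\dim M\) (valid because \(R\) is Cohen--Macaulay) pins down the bottom one.

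One small imprecision worth fixing: it is not true in general that \(H_\m^{j}(M)\ne0\) for \emph{every} \(j\in[\depth M,\dim M]\); take for instance \(M=R\oplus\kk\) with \(r\ge2\). What Grothendieck's theorems give is vanishing outside this range together with nonvanishing at the two endpoints, and that is all you need. Correspondingly, \(S(M)\) is contained in the interval \([r-\dim M,\,r-\depth M]\) and contains both endpoints, but need not be the whole interval. This does not affect parts~(1)--(3), which only concern \(\min S(M)\) and \(\max S(M)\).
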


The following well-known property of Cohen--Macaulay modules will be crucial for us,
\cf~
\cite[Lemma~7.5]{Atiyah:1974} or~\cite[Cor.~A.6.16]{AlldayPuppe:1993}.

\begin{lemma}
  \label{thm:CM-map-0}
  Let \(M\) be a Cohen--Macaulay \(R\)-module of dimension~\(j\).
  Then any non-zero submodule of~\(M\) has dimension~\(j\).
  Equivalently, any map~\(N\to M\) of \(R\)-modules
  is trivial if \(\dim N<j\).
\end{lemma}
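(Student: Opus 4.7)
The plan is first to note the equivalence of the two formulations. For any $R$-linear map $f\colon N\to M$, the image $\im f\subset M$ satisfies $\ann\im f\supset\ann N$, hence $\dim\im f\le\dim N$; so if every non-zero submodule of $M$ has dimension $j$, then $\dim N<j$ forces $\im f=0$. Conversely, a non-zero submodule $N\subset M$ is the image of its (non-trivial) inclusion, hence under the second statement must have $\dim N\ge j$; combined with $\dim N\le\dim M=j$ (which holds whenever $N\subset M$, as $\ann M\subset\ann N$), this gives $\dim N=j$. So it suffices to show that every non-zero submodule $N\subset M$ satisfies $\dim N\ge j$.

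For this I would invoke the standard associated-prime description of the Cohen--Macaulay condition: for a Cohen--Macaulay module $M$, every $\p\in\operatorname{Ass}(M)$ satisfies $\dim R/\p=\dim M=j$ (so $M$ has no embedded or low-dimensional primes). Granting this, the lemma follows quickly: since $N\ne 0$ is finitely generated, $\operatorname{Ass}(N)$ is non-empty, and $\operatorname{Ass}(N)\subset\operatorname{Ass}(M)$; picking any $\p\in\operatorname{Ass}(N)$ yields an injection $R/\p\hookrightarrow N$, whence $\dim N\ge\dim R/\p=j$.

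The main obstacle is justifying the unmixedness statement for $\operatorname{Ass}(M)$, since Proposition~\ref{thm:dim-depth-ext} by itself only controls $\dim M$ and $\depth M$ as global invariants, not the individual primes in $\operatorname{Ass}(M)$. The cleanest route is the standard inequality $\depth M\le\dim R/\p$ for any $\p\in\operatorname{Ass}(M)$, combined with $\dim R/\p\le\dim M$ and the Cohen--Macaulay identity $\depth M=\dim M$, which together pin down $\dim R/\p=j$. Alternatively, one can simply refer to \cite[App.~A]{AlldayPuppe:1993} or the parallel argument of \cite[Lemma~7.5]{Atiyah:1974}, both of which establish exactly this statement in the present setting. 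Everything else is bookkeeping.
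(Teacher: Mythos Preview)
Your argument is correct: the equivalence of the two formulations is handled cleanly, and the main step via associated primes is the standard one (using \(\depth M\le\dim R/\p\le\dim M\) for \(\p\in\operatorname{Ass}(M)\), forced to equalities by the Cohen--Macaulay hypothesis, together with \(\emptyset\ne\operatorname{Ass}(N)\subset\operatorname{Ass}(M)\)). Note that the paper does not actually supply a proof of this lemma; it simply records it as well-known and points to \cite[Lemma~7.5]{Atiyah:1974} and \cite[Cor.~A.6.16]{AlldayPuppe:1993}, the same sources you invoke. So your write-up is not so much a different route as a spelled-out version of what the paper leaves to the references.
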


\subsection{Torsion-freeness}
\label{sec:Torsion-freeness}

Any free \(R\)-module is torsion-free, but of course the converse is false for~\(r>1\).
We review a useful way to interpolate between these two notions.
A good reference for this material is \cite[Sec.~16E]{BrunsVetter:1988}.

Reflexive \(R\)-modules and syzygies have been defined in the introduction.
For convenience, we call any \(R\)-module a zeroth syzygy.
By Hilbert's Syzygy Theorem,  an \(R\)-module~\(M\) is an \(r\)-th syzygy if and only if
it is free over~\(R\). This holds for not finitely-generated~\(M\) as well.

\begin{proposition}
  \label{thm:torsionfree}
  The following are equivalent for any \(R\)-module~\(M\) and any \(j\ge 1\):
  \begin{enumerate}
  \item \label{tf1} \(M\) is a \(j\)-th syzygy.
  \item \label{tf2} Every \(R\)-regular sequence of length at most~\(j\) is \(M\)-regular.
  \item \label{tf3}
    One of the following conditions holds, depending on~\(j\):
    \begin{enumerate}
    \item[\(j=1\):] \(M\) is torsion-free.
    \item[\(j=2\):] \(M\) is reflexive.
    \item[\(j\ge3\):] \(M\) is reflexive and \(\Ext_{R}^{i}(\Hom_{R}(M,R),R)=0\) for all~\(1\le i\le j-2\).
    \end{enumerate}
  \end{enumerate}
\end{proposition}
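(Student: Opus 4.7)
The plan centres on the Auslander--Bridger transpose $\operatorname{Tr} M$, which encodes all three conditions in a single Ext-vanishing statement. Given a finite free presentation $F_1 \xrightarrow{\phi} F_0 \to M \to 0$, set $\operatorname{Tr} M := \coker\bigl(\phi^*\colon F_0^* \to F_1^*\bigr)$, where I abbreviate $(-)^* := \Hom_R(-,R)$. Splitting the resulting left exact sequence $0 \to M^* \to F_0^* \to F_1^* \to \operatorname{Tr} M \to 0$ at its middle into two short exact pieces and dualizing each produces the four-term exact sequence
\begin{equation*}
  0 \to \Ext_R^1(\operatorname{Tr} M, R) \to M \to M^{**} \to \Ext_R^2(\operatorname{Tr} M, R) \to 0,
\end{equation*}
together with dimension-shift isomorphisms $\Ext_R^{i+2}(\operatorname{Tr} M, R) \cong \Ext_R^i(M^*, R)$ for $i \ge 1$. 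Reading this off, condition~(3) is equivalent to the single statement that $\Ext_R^i(\operatorname{Tr} M, R) = 0$ for $1 \le i \le j$.

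With this dictionary in hand, (3)~$\Rightarrow$~(1) is essentially formal: take a free resolution $\cdots \to G_1 \to G_0 \to M^* \to 0$ and dualize to obtain $0 \to M^{**} \to G_0^* \to G_1^* \to \cdots$; exactness at $G_i^*$ for $1 \le i \le j-2$ is the hypothesized vanishing of $\Ext_R^i(M^*, R)$, while reflexivity identifies $M$ with $M^{**}$, so truncating yields the desired $j$-syzygy presentation.

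For (1)~$\Rightarrow$~(2) I would induct on~$j$. View the syzygy sequence as a complex $K^\bullet$ with $K^0 = M$ and $K^i = F^i$, exact in cohomological degrees~$\le j - 1$. An $R$-regular element $a_1$ is regular on each free~$F^i$ and (since $M$ embeds in $F^1$) on~$M$, so the short exact sequence $0 \to K^\bullet \xrightarrow{a_1} K^\bullet \to K^\bullet/a_1 K^\bullet \to 0$ of complexes gives, via its cohomology long exact sequence, that $K^\bullet/a_1 K^\bullet$ remains exact in degrees~$\le j-2$. This exhibits $M/a_1 M$ as a $(j-1)$-th syzygy over $R/(a_1)$; the inductive hypothesis then yields that the images $\bar a_2, \ldots, \bar a_j$ form an $(M/a_1 M)$-regular sequence.

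The main obstacle is (2)~$\Rightarrow$~(3), which is the Auslander--Bridger theorem in disguise. The case $j = 1$ is immediate since $R$ is a domain: ``every non-zero element of $R$ is $M$-regular'' is precisely torsion-freeness. The case $j = 2$ is the classical equivalence, over normal Gorenstein rings, between Serre's condition~$(S_2)$ and reflexivity. For $j \ge 3$ one argues inductively: via Proposition~\ref{thm:dim-depth-ext}, the vanishing $\Ext_R^i(M^*, R) = 0$ for $1 \le i \le j-2$ is a depth bound on~$M^*$, which is extracted from the depth information that~(2) imposes on~$M$ by reducing modulo $R$-regular sequences.
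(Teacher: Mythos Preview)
The paper does not prove this proposition at all: it is stated as background commutative algebra, with the reader directed to \cite[Sec.~16E]{BrunsVetter:1988} for details. Your approach via the Auslander--Bridger transpose is precisely the machinery used in that reference, so your outline is the ``right'' one and there is nothing to compare against in the paper itself.

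That said, two places in your sketch deserve tightening. First, your argument for (3)~$\Rightarrow$~(1) invokes reflexivity to identify $M$ with~$M^{**}$, but for $j=1$ condition~(3) only gives torsion-freeness. You should note separately that torsion-freeness makes $M\to M^{**}$ injective, and $M^{**}\hookrightarrow G_0^*$ then already exhibits~$M$ as a first syzygy; the reflexivity step is only needed for $j\ge 2$. Second, your handling of (2)~$\Rightarrow$~(3) for $j\ge 3$ is too compressed to stand as a proof. The assertion that the required $\Ext$-vanishing on~$M^*$ ``is a depth bound on~$M^*$\ldots extracted from the depth information that~(2) imposes on~$M$'' hides the genuine content: one must pass from depth conditions on~$M$ to $\Ext$-vanishing for~$\operatorname{Tr} M$ (equivalently for~$M^*$), and Proposition~\ref{thm:dim-depth-ext} alone does not do this, since it concerns vanishing of $\Ext^{r-i}$ from above rather than of $\Ext^i$ from below. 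The standard route is to localize and show that~(2) is equivalent to $\operatorname{depth} M_{\mathfrak p}\ge\min(j,\operatorname{depth} R_{\mathfrak p})$ for all primes~$\mathfrak p$, then invoke the local Auslander--Bridger criterion. You correctly flag this as the main obstacle, but as written it is a pointer to the theorem rather than a proof of it.
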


\subsection{The Koszul resolution}
\label{sec:koszul-resolution}

The easiest way
to obtain syzygies over a polynomial ring is to use the Koszul resolution
\begin{equation}
  \label{eq:koszul-resolution}
  0 \longrightarrow
  R[\dd r] \stackrel{\delta_{r}}\longrightarrow
  R[\dd(r-1)]^{\binom{r}{r-1}} \longrightarrow
  \cdots \longrightarrow
  R[\dd]^{\binom{r}{1}} \stackrel{\delta_{1}}\longrightarrow
  R \stackrel{\delta_{0}}\longrightarrow \kk  \longrightarrow 0 \; ;
\end{equation}
indeed, the image of~\(\delta_{j}\) is a \(j\)-th syzygy by definition.
We define
\begin{equation}
  K_{j} = \im \delta_{j}[-\dd j] = \ker \delta_{j-1}[-\dd j]
\end{equation}
The degrees shifts ensure that \(K_{j}\) is generated in degree~\(0\).
For example, \(K_{1}=\m[-\dd]\) and \(K_{r}=R\).
(Recall that the indeterminates have degree~\(\dd\).)
We set \(K_{0}=\kk \) and \(K_{r+1}=0\) for convenience.

\goodbreak

In Section~\ref{sec:examples}
we will need the following property of the Koszul syzygies.

\begin{lemma}
  \label{thm:Ext-Koszul}
  For~\(1\le j\le r\),
  \begin{equation*}
    \Ext_{R}^{i}(K_{r-j},R) \cong
    \begin{cases}
      K_{j+1}[\dd] & \text{if \(i=0\),} \\
      \kk [-\dd j] & \text{if \(i=j\),} \\
      0 & \text{otherwise.}
    \end{cases}
  \end{equation*}
  Moreover, any short exact sequence of \(R\)-modules
  \begin{equation*}
    0 \longrightarrow R[l] \longrightarrow M \longrightarrow K_{r-j}[l'] \longrightarrow 0
  \end{equation*}
  splits if \(j\ne1\) or \(l-l'\ne \dd\).
\end{lemma}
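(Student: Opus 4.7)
The plan is to compute $\Ext^\bullet_R(K_{r-j}, R)$ directly from the Koszul resolution. Since $K_{r-j}[\dd(r-j)] = \ker\delta_{r-j-1}$, the tail
\[
0 \to C_r \to C_{r-1} \to \cdots \to C_{r-j} \to K_{r-j}[\dd(r-j)] \to 0,
\]
with $C_k = R[\dd k]^{\binom{r}{k}}$, is a free resolution of $K_{r-j}[\dd(r-j)]$ of length $j$. Hence $\Ext^i_R(K_{r-j}[\dd(r-j)], R)$ equals the cohomology in position $r-j+i$ of the upper truncation $D^{\ge r-j}$ of the dualized Koszul complex $D^\bullet = \Hom_R(C_\bullet, R)$, where $D^k = R[-\dd k]^{\binom{r}{k}}$. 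The full cohomology $H^\bullet(D^\bullet) = \Ext^\bullet_R(\kk, R)$ is concentrated in top degree $r$ and equals $\kk[-\dd r]$ there.

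The short exact sequence of complexes $0 \to D^{\ge r-j} \to D^\bullet \to D^{<r-j} \to 0$ then pins down the cohomology of $D^{\ge r-j}$. Using exactness of $D^\bullet$ below the top, the complex $D^{<r-j}$ has cohomology only in its top position $r-j-1$, where it is isomorphic to $\ker(D^{r-j} \to D^{r-j+1})$. The long exact cohomology sequence forces
\[
H^r(D^{\ge r-j}) = \kk[-\dd r], \qquad H^{r-j}(D^{\ge r-j}) \cong \ker(D^{r-j} \to D^{r-j+1}),
\]
with all other cohomology zero.

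The remaining step, and the one with any real content, is to identify this kernel. Here I would invoke the self-duality of the Koszul complex: the wedge pairing $\bigwedge^k V \otimes \bigwedge^{r-k} V \to \bigwedge^r V$ on the generating $r$-dimensional space $V$ (in degree $\dd$) induces an isomorphism of complexes $D^\bullet \cong C_\bullet[-\dd r]$ that sends $D^{r-j}$ to $C_j[-\dd r]$ and intertwines the differentials up to sign. Hence $\ker(D^{r-j} \to D^{r-j+1})$ corresponds to $\ker\delta_j[-\dd r] = K_{j+1}[\dd(j+1-r)]$. Undoing that shift together with the initial $\dd(r-j)$-shift yields $\Hom_R(K_{r-j}, R) = K_{j+1}[\dd]$ and $\Ext^j_R(K_{r-j}, R) = \kk[-\dd j]$, with all other Ext groups vanishing.

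For the splitting statement, any short exact sequence $0 \to R[l] \to M \to K_{r-j}[l'] \to 0$ is classified as a graded extension by the degree-zero part of $\Ext^1_R(K_{r-j}[l'], R[l]) \cong \Ext^1_R(K_{r-j}, R)[l-l']$, which equals $\Ext^1_R(K_{r-j}, R)_{l'-l}$. By the first part this vanishes unless $j = 1$; when $j=1$, $\Ext^1_R(K_{r-1}, R) = \kk[-\dd]$ is concentrated in degree $-\dd$, so its $(l'-l)$-piece is nonzero exactly when $l - l' = \dd$. Outside this one exceptional case the extension class is forced to be zero and the sequence splits. The only substantive input beyond formal Ext manipulations is the Koszul self-duality; the rest is bookkeeping of graded shifts.
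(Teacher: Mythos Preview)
Your proof is correct and follows essentially the same approach as the paper's: the paper's own proof simply says the first claim is ``an easy calculation based on the self-duality of the Koszul resolution'' and that the second follows from the classification of graded extensions by the degree-$0$ part of $\Ext_R^1(K_{r-j}[l'],R[l])=\Ext_R^1(K_{r-j},R)[l-l']$. You have carried out exactly this computation in detail --- truncating the Koszul resolution to get a free resolution of $K_{r-j}$, dualizing, and then invoking the self-duality $\Hom_R(C_\bullet,R)\cong C_\bullet[-\dd r]$ to identify the resulting kernel with $K_{j+1}$ up to the appropriate shift --- so your argument is a fleshed-out version of what the paper sketches.
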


\begin{proof}
  The first claim is an easy calculation based on the self-duality of
  the Koszul resolution. The second part follows from the first and the fact
  that (graded) extensions of the form above are classified
  by the degree~\(0\) part of
  \begin{equation}
    \Ext_{R}^{1}(K_{r-j}[l'],R[l])=\Ext_{R}^{1}(K_{r-j},R)\bigl[l-l'\bigr].
    \qedhere
  \end{equation}
\end{proof}

The observation that extensions of \(R\)-modules
\begin{equation}
  \label{eq:extension-LMN}
  0 \longrightarrow L \longrightarrow M \longrightarrow N \longrightarrow 0
\end{equation}
(with maps of degree~\(0\)) 
are classified by the degree~\(0\) part of~\(\Ext_{R}^{1}(N,L)\)
is certainly not new, but we could not locate it in the literature.
It can be proven in the same way as in the ungraded case:
Any free resolution~\(F_{2}\to F_{1}\to F_{0}\to N\) gives rives
to a commutative diagram
\begin{equation}
  \begin{tikzcd}[row sep=large, column sep=large]
    F_{2} \dar{f_{2}} \rar & F_{1} \dar{f_{1}} \rar & F_{0} \dar{f_{0}} \rar & N \dar{=} \rar & 0 \\
    0 \rar & L \rar & M \rar & N \rar & 0 \mathrlap{.}
  \end{tikzcd}
\end{equation}
The map~\(f_{1}\)
determines a degree~\(0\) class in~\(\Ext_{R}^{1}(N,L)\). This class is independent
of the choices made, and is zero if and only if the extension~\eqref{eq:extension-LMN}
splits.

\section{Equivariant homology and cohomology}
\label{sec:equiv-cohomology}

\subsection{Standing assumptions}
\label{sec:assumptions}

\(C_{*}(-)\) and \(C^{*}(-)\) denote normalized singular chains and cochains
with coefficients in the field~\(\kk \), and \(H_{*}(-)\)~and~\(H^{*}(-)\)
singular (co)ho\-mol\-ogy.
We adopt a cohomological grading, so that the homology
of a space lies in non-positive degrees; an element~\(c\in H_{i}(X)\)
has cohomological degree~\(-i\).

Throughout, \(T\cong(S^{1})^{r}\) denotes a torus of rank~\(r\).
The cohomology ring
\( 
  R=H^{*}(BT) 
\) 
of its classifying space
is a polynomial algebra on \(r\)~generators of degree~\(2\).

All \(T\)-spaces 
are assumed to be Hausdorff, second-countable, locally compact and locally contractible.
Important examples are topological manifolds, complex algebraic varieties,
and countable, locally finite CW~complexes,
each with a continuous \(T\)-action.

It follows from our assumptions that every subset~\(Y\subset X\) is paracompact, hence singular cohomology
and Alexander--Spanier cohomology are naturally isomorphic for all \(T\)-pairs~\((X,Y)\)
such that \(Y\) is locally contractible.
The latter will be a standing assumption on all \(T\)-pairs we consider;
it holds automatically if \(Y\) is open in~\(X\).

\subsection{The singular Cartan model}
\label{sec:singular-Cartan-model}

It will be convenient to use the ``singular Cartan model''
\cite[Sec.~7.3]{FelixHalperinThomas:1995}
(see also \cite[Lemma~5.1]{Jones:1987}~and~\cite[Sec.~5.1]{Franz:2003}).
We recall the construction. 
For smooth manifolds and real coefficients,
the exposition could be simplified by substituting
the usual Cartan model~\(\Omega_{T}^{*}(X)=\Omega^{*}(X)\otimes R\)
for the singular Cartan model~\(\CT^{*}(X)\).
In particular, this would avoid some technical difficulties addressed
in Remark~\ref{rem:finiteness}.

The normalized singular chain complex~\(C_{*}(T)\) is a (graded) commutative dg~bialgebra
via the Pontryagin product and the Alexander--Whitney diagonal~\(\Delta\).
Moreover \(C_{*}(A,B)\)~and \(C^{*}(A,B)\) are
naturally dg~modules over~\(C_{*}(T)\) for any \(T\)-pair~\((A,B)\) in~\(X\). 

The \(\kk \)-vector spaces \(H_{1}(T)\)~and~\(H^{2}(BT)\)
are canonically dual to each other
by the transgression homomorphism~\(H^{1}(T)\to H^{2}(BT)\).
We choose dual bases \((x_{1},\dots,x_{r})\) of~\(H_{1}(T)\) and
\((t_{1},\dots,t_{r})\) of~\(H^{2}(BT)\)
as well as representatives~\(a_{i}\in C_{1}(T)\) of the homology classes~\(x_{i}\).
We require the~\(a_{i}\) to be primitive, \ie,
\(\Delta a_{i}=a_{i}\otimes1+1\otimes a_{i}\) for all~\(i\).
For example, if the basis~\((x_{i})\) is induced by a basis of~\(\pi_{1}(T,1)\),
then the~\(a_{i}\)'s can be representative loops.

For a \(T\)-pair~\((A,B)\) in~\(X\), consider the free \(R\)-module
\begin{align}
  \label{eq:definition-CT}
  \CT^{*}(A,B) &= C^{*}(A,B)\otimes R.
  \intertext{The assignments}
  \label{eq:definition-d-CT}
  d(\gamma\otimes f) &=
  d\gamma\otimes f + \sum_{i=1}^{r}a_{i}\cdot\gamma\otimes t_{i}f \\
  (\gamma\otimes f)\cup(\gamma'\otimes f') &= \gamma\cup\gamma'\otimes f f'
\end{align}
turn \(\CT^{*}(A,B)\) into a dg~\(R\)-algebra, that is,
a dg~algebra and dg~\(R\)-module such that the product is \(R\)-bilinear.
(Here one uses that \(C_{*}(T)\) is (graded) commutative
and that the representatives~\(a_{i}\) are primitive.)

We define the \(T\)-equivariant cohomology of~\((A,B)\) by
\begin{equation}
  \HT^{*}(A,B) = H^{*}(\CT^{*}(A,B))
\end{equation}
and, following \cite{Franz:2003}, call \eqref{eq:definition-CT}
the \emph{singular Cartan model} of~\((A,B)\).
The \(R\)-algebra \(\HT^{*}(A,B)\) is naturally isomorphic
to~\(H^{*}(A_{T},B_{T})\)
\cite[Thm.~7.5]{FelixHalperinThomas:1995},~\cite[Thm.~5.1]{Franz:2003},
where \(X_{T}=(ET\times X)/T\) denotes the Borel construction
(or homotopy quotient) of~\(X\).
In particular, \(\HT^{*}(A,B)\) does not depend on the choices made above.

Filtering \eqref{eq:definition-CT} by \(R\)-degree leads to a
first quadrant spectral sequence with
\begin{equation}
  \label{eq:ss-HT-E2}
  E_{1} = E_{2} = H^{*}(A,B)\otimes R  \;\Rightarrow\;  \HT^{*}(A,B).
\end{equation}
This spectral sequence
is isomorphic to the Serre spectral sequence for the fibration~\(X\to X_{T}\to BT\)
from the \(E_{2}\)~page on, \cf~the proof of~\cite[Thm.~4.7]{Franz:2003}.

\begin{remark}
  \label{rem:consequences-Serre}
  It follows from the spectral sequence~\eqref{eq:ss-HT-E2}
  (or the minimal Hirsch--Brown model mentioned in Remark~\ref{rem:finiteness})
  that the \(R\)-module~\(\HT^{*}(A,B)\)
  is free if the restriction map~\(\HT^{*}(A,B)\to H^{*}(A,B)\)
  is surjective (Leray--Hirsch) and that it is finitely generated
  if \(H^{*}(A,B)\) is finite-dimensional over~\(\kk\).
  The converses to both statements are true as well; to see this, one can for instance
  invoke 
  an Eilenberg--Moore theorem, \cf~\cite[Ch.~7]{McCleary:2001}.
\end{remark}

\begin{assumption}
  \label{ass:H-finite}
  We additionally assume from now on that \(H^{*}(A,B)\) is finite-dimensional
  for all \(T\)-pairs~\((A,B)\) we consider. By Remark~\ref{rem:consequences-Serre},
  this is equivalent to
  \(\HT^{*}(A,B)\) being finitely generated over~\(R\).
\end{assumption}

\subsection{Equivariant homology}
\label{sec:equiv-homology}

We define the equivariant chain complex~\(\hCT_{*}(A,B)\) of the \(T\)-pair~\((A,B)\)
to be the (graded) \(R\)-dual of the singular Cartan model,
\begin{equation}
  \hCT_{*}(A,B) = \Hom_{R}(\CT^{*}(A,B),R).
\end{equation}
(Recall that a map has degree~\(m\in\Z\) if it shifts degrees by~\(m\).)

The \emph{equivariant homology} of~\((A,B)\)
is defined as~\(\hHT_{*}(A,B)=H_{*}(\hCT_{*}(A,B))\).
Note that \(\hHT_{*}(X)\) is \emph{not} the homology of the Borel construction~\(X_{T}\) in general.
For example, for a point~\(X=*\), \(\hHT_{*}(X)=R\) is free over~\(R\)
whereas \(H_{*}(X_{T})=\Hom_{\kk }(\HT^{*}(X),\kk )\) is torsion.
We will see in Remark~\ref{rem:finiteness}
that \(\hHT_{*}(A,B)\) is bounded below under our assumptions on spaces.

It turns out that \(\hHT_{*}(X)\) is a suitable equivariant homology
in the sense that it enjoys many desirable properties.
For example, equivariant homology is related to equivariant cohomology
via a universal coefficient theorem over~\(R\) (Proposition~\ref{thm:uct})
and, in case of a Poincaré duality space,
also via equivariant Poincaré duality (Proposition~\ref{thm:equiv-PD}).
It is therefore not surprising that other people have considered this or
similar constructions before.
The earliest we are aware of are Jones~\cite[\S 5]{Jones:1987}
(for \(T=S^{1}\)), Brylinski~\cite{Brylinski:1992} (for intersection homology)
and Edidin--Graham~\cite[Sec.~2.8]{EdidinGraham:1998} 
(for algebraic varieties and homology with closed supports).
Equivariant homology is also implicit in~\cite[p.~353]{AlldayPuppe:1993}.

\begin{remark}
  \label{rem:finiteness}
  The dg~\(R\)-module~\(\hCT_{*}(A,B)\) is not bounded below, which will
  make convergence of spectral sequences a delicate issue.
  (In contrast, the \(R\)-dual of the usual Cartan model is bounded below
  as \(\Omega^{*}(X)\) is bounded above.) It is
  therefore useful to observe that both \(\CT^{*}(A,B)\)~and~\(\hCT_{*}(A,B)\)
  are \(R\)-homotopy equivalent to dg~\(R\)-modules
  which are free as \(R\)-modules and bounded below.  
  It follows that \(\hHT_{*}(A,B)\) is bounded below as well.
  
  For~\(\CT^{*}(A,B)\), the ``minimal Hirsch--Brown model''~\(H^{*}(A,B)\otimes R\)
  (with a twisted differential) is one such replacement \cite[Cor.~B.2.4]{AlldayPuppe:1993}.
  Since \(\Hom_{R}(-,R)\) preserves \(R\)-homotopies, the claim for~\(\hCT_{*}(A,B)\) follows.

  Tensoring the \(R\)-homotopy equivalences between \(\CT^{*}(A,B)\) and
  a finitely generated free replacement~\(N\otimes R\) with~\(\kk \) over~\(R\)
  yields homotopy equivalences between \(C^{*}(A,B)\) and~\(N\).
  Hence, if one filters both dg~\(R\)-modules by \(R\)-degree,
  then the maps of spectral sequences induced by the \(R\)-homotopy equivalences
  become isomorphisms from the \(E_{1}\)~page one. The same applies to~\(\hCT_{*}(A,B)\).
  Therefore, in any argument involving a
  comparison of spectral sequences obtained as above,
  one need not worry about convergence issues for~\(\hCT_{*}(A,B)\)
  as one could always replace this complex by one which is bounded below,
  without affecting the pages from \(E_{1}\)~on.

  In particular, there is a convergent spectral sequence
  \begin{equation}
    \label{eq:ss-hHT-E2}
    E_{1} = E_{2} = H_{*}(A,B)\otimes R  \;\Rightarrow\;  \hHT_{*}(A,B)
  \end{equation}
  analogous to~\eqref{eq:ss-HT-E2}.
  Hence any equivariant map of \(T\)-pairs~\((A,B)\to(A',B')\)
  which is a non-equivariant quasi-isomorphism
  induces an isomorphism not only in equivariant cohomology
  (by virtue of the Serre spectral sequence~\eqref{eq:ss-HT-E2}),
  but also in equivariant homology.
  (In fact, it induces an isomorphism between the minimal Hirsch--Brown models.)
\end{remark}

We remarked in Section~\ref{sec:singular-Cartan-model}
that \(\HT^{*}(X)\) does not depend on the choice
of representatives~\(a_{i}\in C_{1}(T)\), hence
neither on the chosen basis~\(x_{1}\),~\ldots,~\(x_{r}\in H_{1}(T)\).
Although we will not need it in the sequel,
we now prove the analogous statement for~\(\hHT_{*}(X)\)
for the sake of completeness;
an alternative proof for~\(\HT^{*}(X)\) will be given along the way.

\begin{proposition}
  \label{thm:hHT-independent-basis}
  Equivariant (co)homology does not depend on the choice of representatives~\(a_{i}\in C_{1}(T)\).
  More precisely:
  Let \(\hat a_{1}\),~\dots,~\(\hat a_{r}\) be another set of representatives,
  and denote by~\(\hatHT^{*}(A,B)\) and \(\hathHT_{*}(A,B)\) the equivariant (co)homology defined via them.
  Then \(\HT^{*}(A,B)\) and \(\hatHT^{*}(A,B)\) are naturally isomorphic as \(R\)\nobreakdash-modules,
  for all \(T\)-pairs~\((A,B)\). 
  The same holds for \(\hHT_{*}(A,B)\) and \(\hathHT_{*}(A,B)\).
\end{proposition}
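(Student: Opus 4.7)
The plan is to construct, for each pair of choices $(a_{i})$ and $(\hat a_{i})$, a natural $R$-linear quasi-isomorphism $\phi\colon\CT^{*}(A,B)\to\hatCT^{*}(A,B)$ of dg $R$-modules. The statement for equivariant homology will follow by applying $\Hom_{R}(-,R)$ after passing to the finitely generated free bounded-below replacements of Remark~\ref{rem:finiteness}, on which $R$-dualization preserves quasi-isomorphisms. Naturality in $(A,B)$ will come for free as long as $\phi$ is built out of data fixed once and for all inside $C_{*}(T)$.

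To build $\phi$, I would start by choosing $b_{i}\in C_{2}(T)$ with $\partial b_{i}=\hat a_{i}-a_{i}$; these exist because $a_{i}$ and $\hat a_{i}$ are homologous and, using an obstruction argument on the Hopf-algebra structure, they can be chosen to remain primitive in $C_{*}(T)$ modulo exact terms. Then I would define $\phi$ by $R$-linearity and
\begin{equation*}
  \phi(\gamma\otimes 1)\;=\;\gamma\otimes 1\,-\,\sum_{i}(b_{i}\cdot\gamma)\otimes t_{i}\,+\,\sum_{|I|\ge 2}\phi_{I}(\gamma)\otimes t^{I},
\end{equation*}
where the higher coefficients $\phi_{I}\colon C^{*}(A,B)\to C^{*}(A,B)$ are iterated actions of suitably symmetrized products of the $b_{i}$ under the $C_{*}(T)$-module structure. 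I would fix the $\phi_{I}$ inductively on $|I|$ by requiring the chain-map equation $\hat d\phi=\phi d$ to hold in each $R$-degree; in $R$-degree $|I|$ the equation reduces to $(d\phi_{I}-\phi_{I}d)(\gamma)$ equalling a universal expression in the lower-order $\phi_{J}$, and the relation $\partial b_{i}=\hat a_{i}-a_{i}$ combined with the graded commutativity of $C_{*}(T)$ (the same ingredient that forces $d^{2}=0$ in the Cartan model) exhibits this expression as $d$ of the required next-order correction.

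To see that $\phi$ is a quasi-isomorphism, I would filter both sides by $R$-degree. On the $E_{0}$ page $\phi$ reduces to the identity on $C^{*}(A,B)\otimes R$, hence the induced map of spectral sequences is the identity from the $E_{1}$ page on, and after replacing $\CT^{*}(A,B)$ and $\hatCT^{*}(A,B)$ by their bounded-below Hirsch--Brown replacements (to make the filtration convergent) the standard comparison theorem forces $\phi$ to be a quasi-isomorphism. Applying $\Hom_{R}(-,R)$ to this replacement-level map then yields the natural isomorphism $\hathHT_{*}(A,B)\cong\hHT_{*}(A,B)$. The main obstacle is the combinatorial design of the higher $\phi_{I}$: one has to choose universal polynomial expressions in the $b_{i}$ whose chain-map defect cancels order by order, a perturbation problem whose solvability rests on the primitivity of the $a_{i}$ (and a careful choice of the $b_{i}$) together with the graded commutativity of the Pontryagin product on $C_{*}(T)$.
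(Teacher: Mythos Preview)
Your approach is essentially the paper's: build an explicit \(R\)-linear dg map \(\phi\colon\CT^{*}(A,B)\to\hatCT^{*}(A,B)\), observe that it reduces to the identity on the \(E_{1}\) page of the \(R\)-degree filtration, conclude it is a quasi-isomorphism, and then dualize for equivariant homology. The paper carries this out far more economically. It simply takes any \(b_{i}\in C_{2}(T)\) with \(d b_{i}=\hat a_{i}-a_{i}\) (no primitivity or obstruction argument) and writes down the single formula
\[
  \gamma\otimes f \;\longmapsto\; \gamma\otimes f - \sum_{i=1}^{r} b_{i}\cdot\gamma\otimes t_{i}f,
\]
asserting that this first-order correction is already a morphism of dg \(R\)-modules; no higher terms \(\phi_{I}\) with \(|I|\ge 2\) are introduced. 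The spectral-sequence comparison and the passage to homology (``follows by dualizing'') are then one line each. So your inductive perturbation scheme and the condition on the \(b_{i}\) are, from the paper's standpoint, unnecessary complications, and your use of the Hirsch--Brown replacement for the dualization step, while correct, is more explicit than what the paper records. The underlying strategy, however, is identical.
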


\begin{proof}
  We have \(\hat a_{i}-a_{i}= d(b_{i})\) for some chains~\(b_{i}\in C_{2}(T)\),
  \(i=1,\dots,r\). The map
  \begin{equation}
    \CT^{*}(A,B) \to \hatCT^{*}(A,B),
    \quad
    \gamma\otimes f \mapsto
    \gamma\otimes f - \sum_{i=1}^{r} b_{i}\cdot \gamma\otimes t_{i}f
  \end{equation}
  is a morphism of dg~\(R\)-modules, and the induced map between the
  \(E_{1}\)~pages~\eqref{eq:ss-HT-E2} is the identity.
  Hence \(\HT^{*}(A,B)\)~and~\(\hatHT^{*}(A,B)\) are naturally isomorphic.
  The claim for~\(\hHT_{*}(A,B)\) follows by dualizing.
\end{proof}

\subsection{Universal coefficient theorem}
\label{sec:universal-coeff}

In the case of an ungraded coefficient ring, universal coefficient theorems
are standard results in homological algebra. We need the following variant.

\begin{proposition} 
  \label{thm:uct}
  Let \((A,B)\) be a \(T\)-pair. 
  Then there are spectral sequences,
  natural in~\((A,B)\),
  \begin{align*}
    E_{2}^{p} &= \Ext_{R}^{p}(\HT^{*}(A,B),R) \;\Rightarrow\; \hHT_{*}(A,B), \\
    E_{2}^{p} &= \Ext_{R}^{p}(\hHT_{*}(A,B),R) \;\Rightarrow\; \HT^{*}(A,B).
  \end{align*}
\end{proposition}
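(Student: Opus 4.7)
The plan is to construct both spectral sequences via the classical universal coefficient spectral sequence for $\Hom_{R}(-,R)$, applied to a bounded, termwise finitely generated free replacement of the singular Cartan model.

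First I would invoke Remark~\ref{rem:finiteness} together with Assumption~\ref{ass:H-finite} to replace $\CT^{*}(A,B)$ by an $R$-homotopy equivalent dg $R$-module $M^{*}$ which is bounded and termwise a finitely generated free $R$-module; the minimal Hirsch--Brown model $H^{*}(A,B)\otimes R$ with its twisted differential does the job. Since $\Hom_{R}(-,R)$ preserves $R$-homotopies, $\hCT_{*}(A,B)$ is then $R$-homotopy equivalent to $N_{*}:=\Hom_{R}(M^{*},R)$, which has the same boundedness and freeness properties; by reflexivity of finitely generated free modules, $\Hom_{R}(N_{*},R)\cong M^{*}$ as dg $R$-modules. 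Both equivariant (co)homology modules are thus computed from $M^{*}$, and naturality of all subsequent constructions in $(A,B)$ is inherited from the naturality up to $R$-homotopy of this replacement.

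To obtain the first spectral sequence, I would choose a Cartan--Eilenberg projective resolution $P^{\bullet,*}\to M^{*}$: a double complex of free $R$-modules whose columns give projective resolutions of the $M^{p}$ and whose rows, after passing to cocycles, coboundaries and cohomology, give projective resolutions of the corresponding objects. Since $R$ has global dimension $r$, this resolution may be chosen bounded in the resolution direction, so the total complex of $\Hom_{R}(P^{\bullet,*},R)$ is bounded and both standard spectral sequences of the double complex converge strongly. Taking cohomology in the resolution direction first, each column collapses (because $M^{p}$ is already projective) to give $\Hom_{R}(M^{p},R)$ concentrated in degree zero, so this spectral sequence abuts to $H^{*}(\Hom_{R}(M^{*},R))=\hHT_{*}(A,B)$. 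The other filtration has $E_{2}^{p}=\Ext_{R}^{p}(H^{*}(M^{*}),R)=\Ext_{R}^{p}(\HT^{*}(A,B),R)$ by the very definition of $\Ext$ through a projective resolution, and its abutment must agree with that of the first filtration.

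For the second spectral sequence, I would run the same argument starting from $N_{*}$ in place of $M^{*}$: take a Cartan--Eilenberg resolution, apply $\Hom_{R}(-,R)$, and identify one abutment as $H^{*}(\Hom_{R}(N_{*},R))=H^{*}(M^{*})=\HT^{*}(A,B)$ via the reflexivity noted above, while the other spectral sequence has $E_{2}^{p}=\Ext_{R}^{p}(H^{*}(N_{*}),R)=\Ext_{R}^{p}(\hHT_{*}(A,B),R)$. The main obstacle is strong convergence: the original $\hCT_{*}(A,B)$ is not bounded below, and without care the spectral sequences might fail to converge. Remark~\ref{rem:finiteness} neutralises precisely this issue, and once one works with $M^{*}$ the rest is the standard universal coefficient spectral sequence for $\Hom_{R}(-,R)$ applied to a bounded complex of projectives.
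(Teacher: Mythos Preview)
Your argument is correct, but it follows a different route from the paper's. The paper does not pass to the minimal Hirsch--Brown model first, nor does it use a Cartan--Eilenberg projective resolution of the source complex. Instead it fixes once and for all a graded injective resolution \(0\to R\to I^{0}\to\cdots\to I^{r}\to0\) of the target, forms the double complex \(\Hom_{R}(M,\III)\) with \(M=\CT^{*}(A,B)\) (respectively \(M=\hCT_{*}(A,B)\)), and reads off the two spectral sequences from the two standard filtrations: one collapses because \(M\) is termwise \(R\)-free, giving \(H^{*}(\Hom_{R}(M,R))\); the other has \(E_{1}^{p}=\Hom_{R}(H^{*}(M),I^{p})\), hence \(E_{2}^{p}=\Ext_{R}^{p}(H^{*}(M),R)\). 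Convergence for the unbounded \(\hCT_{*}(A,B)\) is then justified \emph{a posteriori} via Remark~\ref{rem:finiteness}, and the identification of \(\HT^{*}(A,B)\) with the double dual uses a direct spectral-sequence comparison rather than reflexivity of a finite free model.

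Your approach has the virtue of disposing of convergence issues at the outset and of relying only on textbook Cartan--Eilenberg machinery. The paper's approach is slightly leaner (a single injective resolution of \(R\) rather than a Cartan--Eilenberg resolution for each complex) and, more to the point, introduces the very injective resolution~\(\III\) that is reused throughout the proof of the main Theorem~\ref{thm:exthab-ss}; so the choice there is partly dictated by what comes later.
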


\begin{proof}
  Since \(\hCT_{*}(A,B)\) is defined as the \(R\)-dual of~\(\CT^{*}(A,B)\),
  the existence of the first spectral sequence follows
  from the following claim:
  Let \(M\) be a dg~\(R\)-module which is free over~\(R\)
  if one forgets the differential.
  Then there is a spectral sequence converging
  to~\(H^{*}(\Hom_{R}(M,R))\) with \(E_{2}\)~page~\(\Ext_{R}(H^{*}(M),R)\).

  To prove this claim,
  we take a (graded) injective resolution
  \begin{equation*}
    0\to R\to I^{0}\to\dots\to I^{r}\to 0
  \end{equation*}
  with total complex~\(\III=\bigoplus_{p=0}^{r}I^{p}[p]\)
  and consider the double complex
  \begin{equation}
    C = \Hom_{R}(M,\III)
   \quad\text{with}\quad
    C^{pq} = \Hom_{R}(M,I^{p})^{q}. 
  \end{equation}
  Filtering by \(q\)-degree and using the freeness of~\(M\), we see
  that the canonical injection~\(\Hom_{R}(M,R)\to C\)
  is a quasi-isomorphism.
  Filtering by \(p\)-degree instead leads to a spectral sequence
  with \(E_{1}^{p}=\Hom_{R}(H^{*}(M),I^{p})\), hence with the desired \(E_{2}\)~page.

  This also establishes the existence of a spectral of the second type
  converging to the cohomology of the \(R\)-dual of~\(\hCT_{*}(A,B)\). 
  Since \(H^{*}(A,B)\) is a finite-dimensional vector space,
  another spectral sequence argument shows
  that the canonical inclusion
  \begin{equation*}
    \CT^{*}(A,B) \to \Hom_{R}(\hCT_{*}(A,B),R)
  \end{equation*}
  is a quasi-isomorphism.
  (Note that in this part we are using Remark~\ref{rem:finiteness}
  to ensure naive convergence of all spectral sequences.)
\end{proof}

\begin{example}
  \label{ex:homogeneous-space}
  Consider the homogeneous space~\(X=T/T'\), where \(T'\) is a subtorus of rank~\(r-i\).
  Then \(\HT^{*}(X)=H^{*}(BT')=\vcentcolon R'\), hence
  \begin{equation}
    \Ext_{R}^{j}(\HT^{*}(X),R)=
    \begin{cases}
      R'[-2i] & \text{if \(j=i\),} \\
      0 & \text{otherwise}
    \end{cases}
  \end{equation}
  by a computation similar to Lemma~\ref{thm:Ext-Koszul}.
  The universal coefficient spectral sequence therefore degenerates
  to a single column, and
  \begin{equation}
    \hHT_{*}(X) = R'[-i].
  \end{equation}
  Note that both \(\HT^{*}(X)\)~and~\(\hHT_{*}(X)\) are Cohen--Macaulay \(R\)-modules
  of dimension~\(r-i\).
  A generalization of this example in Section~\ref{sec:orbit-filtration}
  will be a crucial ingredient for our main result.
\end{example}

\subsection{Poincaré duality}

We say that \(X\) is a \emph{\(\kk \)-Poincaré duality space}
(\emph{PD~space}\/ for short) of formal dimension~\(n\)
if \(X\) is non-empty and connected and
if there is a distinguished class~\(o\in H_{n}(X)\), called an orientation,
such that the Poincaré pairing (of degree~\(-n\))
\begin{equation}
  \label{eq:poincare-cup}
  H^{*}(X)\times H^{*}(X) \to \kk ,
  \quad
  (\alpha,\beta) \mapsto
  \pair{\alpha\cup\beta,o}
\end{equation}
is non-degenerate. Equivalently, this pairing is perfect,
which means that the induced map
\begin{equation}
  \label{eq:poincare-cap}
  H^{*}(X) \to \Hom_{\kk }(H^{*}(X),\kk ) = H_{*}(X),
  \quad
  \alpha\mapsto \alpha\cap o
\end{equation}
is an isomorphism of vector spaces (of degree~\(-n\)).
For example, compact oriented \(\kk \)-homology manifolds are PD~spaces,
\cf~\cite[\S 3.6]{Borel:1957}.

In the same way we can dualize
the equivariant cup product 
to a cap product. On the cochain level it is given by
\begin{equation}
  \CT^{*}(X)\otimes\hCT_{*}(X) \xrightarrow{\cap} \hCT_{*}(X),
  \quad
  \pair{\alpha,\beta \cap b} = \pair{\alpha\cup\beta,b}
\end{equation}
for \(\alpha\),~
\(\beta\in\CT^{*}(X)\)~and~\(b\in\hCT_{*}(X)\).

\begin{proposition} 
  \label{thm:equiv-PD}
  Let \(X\) be a PD~space of formal dimension~\(n\).
  The orientation~\(o\) lifts uniquely to an equivariant orientation~\(o_{T}\in \hHT_{n}(X)\)
  under the restriction map 
  \( 
    \hHT_{*}(X) \to H_{*}(X)
  \). 
  Moreover, taking the cap product with~\(o_{T}\) gives an isomorphism of \(R\)-modules
  (of degree~\(-n\))
  \begin{equation*}
    \HT^{*}(X) \xrightarrow{\cap o_{T}} \hHT_{*}(X).
  \end{equation*}
\end{proposition}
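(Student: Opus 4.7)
The plan relies on the spectral sequences~\eqref{eq:ss-HT-E2} and~\eqref{eq:ss-hHT-E2} combined with the equivariant cap product.

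For existence and uniqueness of~$o_{T}$, I would use the homological spectral sequence $E_{2} = H_{*}(X) \otimes R \Rightarrow \hHT_{*}(X)$ from Remark~\ref{rem:finiteness}. A summand $H_{i}(X) \otimes R^{(p)}$, where $R^{(p)}$ denotes the polynomial-degree $p$ component, has cohomological degree $-i + 2p$ and vanishes unless $0\le i\le n$ (by non-equivariant Poincaré duality) and $p\ge 0$. In cohomological degree $-n$ this forces $p=0$ and $i=n$, so the only contribution is $H_{n}(X)\otimes R^{(0)} = H_{n}(X)$, sitting at filtration degree~$0$. The same count shows that all differentials entering or leaving this summand land in zero groups, and no higher-filtration terms contribute to $\hHT_{n}(X)$. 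Consequently the edge map $\hHT_{n}(X)\to H_{n}(X)$ (\ie, the restriction map) is an isomorphism, yielding a unique lift $o_{T}$ of~$o$.

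For the isomorphism, I would fix a cocycle representative of~$o_{T}$ in $\hCT_{n}(X)$ and consider the map $\alpha\mapsto \alpha\cap o_{T}$. By construction it is $R$-linear and defines a chain map $\CT^{*}(X)\to\hCT_{*}(X)$ of cohomological degree~$-n$. Because it is $R$-linear (and $o_{T}$ lives in filtration~$0$), it is compatible with the filtrations by $R$-degree on both complexes and therefore induces a morphism between the spectral sequences~\eqref{eq:ss-HT-E2} and~\eqref{eq:ss-hHT-E2}. On the $E_{2}$ page it coincides with $(\cap o)\otimes\id_{R}\colon H^{*}(X)\otimes R\to H_{*}(X)\otimes R$, which is an isomorphism by non-equivariant Poincaré duality~\eqref{eq:poincare-cap}. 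The comparison theorem for spectral sequences then gives that $\cap o_{T}\colon \HT^{*}(X)\to\hHT_{*}(X)$ is an isomorphism of $R$-modules.

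The main obstacle is purely technical: $\hCT_{*}(X)$ is not bounded below, so naive convergence of~\eqref{eq:ss-hHT-E2} and a direct appeal to the spectral sequence comparison theorem are illegitimate. I would handle this exactly as proposed in Remark~\ref{rem:finiteness}, by replacing $\hCT_{*}(X)$ with an $R$-homotopy equivalent, bounded-below, free dg-$R$-module (for instance, the $R$-dual of the minimal Hirsch--Brown model of~$\CT^{*}(X)$) before running the comparison; this replacement induces the same spectral sequence from $E_{1}$ onward and places the argument in a standard setting.
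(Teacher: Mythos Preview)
Your proposal is correct and follows essentially the same route as the paper: both arguments use the degree count in the spectral sequence~\eqref{eq:ss-hHT-E2} to see that $H_{n}(X)\otimes 1$ is the only term in total degree~$-n$ (giving the unique lift~$o_{T}$), then compare the spectral sequences~\eqref{eq:ss-HT-E2} and~\eqref{eq:ss-hHT-E2} via the chain map~$\cap c_{T}$, which on the $E_{1}=E_{2}$ page is the $R$-linear extension of non-equivariant Poincaré duality. Your explicit handling of the convergence issue via Remark~\ref{rem:finiteness} is exactly what the paper intends implicitly.
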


\begin{proof}
  The map~\(\hHT_{*}(X) \to H_{*}(X)\)
  is the edge homomorphism of the spectral sequence~\eqref{eq:ss-hHT-E2}.
  Hence the first claim holds because
  \(H_{n}(X)\otimes 1\) is the only term of total degree~\(-n\) in the \(E_{1}\)~page.

  Let \(c_{T}\in \hCT_{n}(X)\) be a representative of~\(o_{T}\),
  so that its restriction to~\(C_{n}(X)\) is a representative of~\(o\).
  Consider the morphism of dg~\(R\)-modules
  \begin{equation*}
    \CT^{*}(X) \xrightarrow{\cap c_{T}} \hCT_{*}(X)
  \end{equation*}
  and filter both sides by \(R\)-degree.
  It follows that the induced map between the \(E_{1}\)~pages
  of the associated spectral sequences
  is the \(R\)-linear extension of the non-equivariant Poincaré duality map,
  hence an isomorphism.
\end{proof}

For a different proof of Proposition~\ref{thm:equiv-PD}
which uses the minimal Hirsch--Brown model in an essential way,
see \cite[pp.~352--353]{AlldayPuppe:1993}.

\begin{remark}
  \label{rem:equiv-PD}
  As already observed in the introduction,
  equivariant Poincaré duality
  does not necessarily translate into non-degeneracy or perfection
  of the equivariant Poincaré pairing
  \begin{equation}
    \label{eq:poincare-cup-equiv}
    \HT^{*}(X)\times \HT^{*}(X) \to R,
    \quad
    (\alpha,\beta) \mapsto
    \pair{\alpha\cup\beta,o_{T}}.
  \end{equation}
  For example, if \(\HT^{*}(X)\) is a torsion \(R\)-module (which by the localization theorem
  means that there are no fixed points), then the pairing~\eqref{eq:poincare-cup-equiv}
  is trivial. This happens for instance in Example~\ref{ex:homogeneous-space} unless \(T'=T\).
  We will come back to this point in Section~\ref{sec:applications-PD}.
\end{remark}

\section{The main result}
\label{sec:main-result}

\subsection{The orbit filtration}
\label{sec:orbit-filtration}

For~\(-1\le i\le r\), we define the \emph{equivariant \(i\)-skeleton}~\(X_{i}\) of~\(X\)
to be the union of orbits of dimension at most~\(i\).
In particular, \(X_{-1}=\emptyset\), \(X_{0}=X^{T}\) is the fixed point set, and \(X_{r}=X\).
All~\(X_{i}\) are closed in~\(X\).

\begin{assumption}
  \label{ass:orbit-filtration}
  From now on 
  we assume that the characteristic of our ground field~\(\kk \) is~\(0\).
  In addition to the assumptions stated in Section~\ref{sec:assumptions}
  and Assumption~\ref{ass:H-finite},
  we require that \(X\) be finite-dimensional and that
  the set~\(\{T_{x}^{0} : x\in X\}\) be finite, where \(T_{x}^{0}\) denotes the identity component
  of the isotropy group~\(T_{x}\).
  Moreover, all~\(X_{i}\) are assumed to be locally contractible.
\end{assumption}

By~\cite[Prop.~4.1.14]{AlldayPuppe:1993} all~\(H^{*}(X_{i})\) are finite-dimensional, so that
all pairs~\((X_{i},X_{j})\), \(i\ge j\) satisfy our assumptions on \(T\)-pairs
as stated in Section~\ref{sec:assumptions} and Assumption~\ref{ass:H-finite}. 
See Remark~\ref{rem:loc-contractible} below for a possible weakening of these assumptions.  

\begin{proposition}
  \label{thm:stratum-cm}
  The \(R\)-modules \(\HT^{*}(X_{i},X_{i-1})\)~and~\(\hHT_{*}(X_{i},X_{i-1})\) are zero or
  Cohen--Macaulay of dimension~\(r-i\) for~\(0\le i\le r\).
\end{proposition}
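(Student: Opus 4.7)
The strategy is to decompose $Y_i := X_i \setminus X_{i-1}$ into its orbit-type strata, analyze each stratum separately, and then reassemble via a filtration argument. By Assumption~\ref{ass:orbit-filtration} the set of identity components of isotropy groups is finite, so $Y_i$ is a finite disjoint union of locally closed $T$-invariant subsets $Y^{(K)} = \{x \in Y_i : T_x^0 = K\}$, indexed by the subtori $K \subset T$ of dimension $r-i$ that actually occur.

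On each stratum $Y^{(K)}$ the action of $T$ factors through $T/K$ modulo the finite isotropy quotients $T_x/K$, which are rationally invisible since $\Char\kk = 0$. A Leray--Hirsch argument for the Borel fibration $(Y^{(K)})_T \to Y^{(K)}/T$, whose rational fiber is $BK$, combined with Assumption~\ref{ass:H-finite}, exhibits both $\HT^*(Y^{(K)})$ and $\hHT_*(Y^{(K)})$ as finite direct sums of shifted copies of $R/\p_K = H^*(BK)$. These are Cohen--Macaulay of dimension $r-i$ as $R$-modules, generalizing Example~\ref{ex:homogeneous-space}.

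To pass to the pair $(X_i, X_{i-1})$, totally order the strata $S_1, \dots, S_m$ so that each $Z_j := S_1 \cup \cdots \cup S_j$ is closed in $Y_i$ (using the closure partial order on orbit types) and set $W_j := X_{i-1} \cup Z_j$, closed in $X_i$. The short exact sequence of cochain complexes
\begin{equation*}
  0 \to \CT^*(X_i, W_p) \to \CT^*(X_i, W_{p-1}) \to \CT^*(W_p, W_{p-1}) \to 0
\end{equation*}
gives a long exact sequence in equivariant cohomology; by excision, the rightmost term computes the equivariant cohomology of the single stratum $S_p$ (with suitable supports), Cohen--Macaulay of dimension $r-i$ by the previous step. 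Inducting downward from $\HT^*(X_i, W_m) = 0$, the key closure property is the following: if two of the three terms in a short exact sequence of cochain complexes have cohomology that is zero or Cohen--Macaulay of common dimension $d$, then so does the third. Indeed, applying $R\Hom_R(-, R)$ to the short exact sequence yields a distinguished triangle whose long exact sequence of cohomology propagates the vanishing of $\Ext^k_R(-, R)$ for $k \ne r-d$ characterizing Cohen--Macaulayness of dimension $d$ (Proposition~\ref{thm:dim-depth-ext}(3)). This yields the claim for $\HT^*(X_i, X_{i-1})$; the corresponding statement for $\hHT_*(X_i, X_{i-1})$ then follows from the degeneration of the universal coefficient spectral sequence (Proposition~\ref{thm:uct}) on a Cohen--Macaulay module, together with the Gorenstein self-duality of~$R$.

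The principal technical obstacle is the excision identification of $\HT^*(W_p, W_{p-1})$ with an equivariant cohomology supported on the single stratum $S_p$, as this requires an equivariant slice or tubular-neighborhood argument compatible with the singular Cartan model of Section~\ref{sec:singular-Cartan-model}; related subtleties concern ensuring that the spaces $Y^{(K)}$ and $Y^{(K)}/T$ meet the local-contractibility and finiteness hypotheses needed to apply our Borel-fibration analysis.
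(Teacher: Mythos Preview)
Your overall strategy coincides with the paper's---stratify \(X_i\setminus X_{i-1}\) by the identity component~\(K\) of the isotropy, and on each piece use that the residual \(T/K\)-action is almost free to reduce (over a field of characteristic~\(0\)) to \(H^*(BK)\) tensored with a finite-dimensional vector space. There is, however, a genuine error: the claim that \(\HT^*(Y^{(K)})\) and \(\hHT_*(Y^{(K)})\) are \emph{direct sums} of shifted copies of \(R/\p_K\) is false. Writing \(R=R'\otimes R''\) with \(R'=H^*(BK)\), one obtains \(R'\otimes V\) for some finite-dimensional \(R''\)-module~\(V\), but the \(R''\)-action on~\(V\) need not be trivial. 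For instance, with \(r=1\), \(i=1\) and \(X=S^3\) carrying the Hopf action, one has \(K=\{1\}\), \(R/\p_K=\kk\), yet \(\HT^*(S^3)=H^*(\CP^1)\cong\kk[t]/(t^2)\), a nonsplit extension of~\(\kk\) by~\(\kk[2]\) over \(R=\kk[t]\). The fix is precisely what the paper does: observe only that \(V\) admits a finite filtration with trivial \(R''\)-subquotients, so \(R'\otimes V\) is an iterated extension of Cohen--Macaulay modules of dimension~\(r-i\), and conclude via the long exact \(\Ext\)-sequence (the same device you invoke later in your inductive step).

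Two further simplifications from the paper are worth noting. First, your filtration by the~\(W_j\) is unnecessary: since each \(Y^{(K)}\subset X^K\) and \(X^K\) is closed, two strata \(Y^{(K)}\) and \(Y^{(K')}\) with \(K\ne K'\) of the same rank cannot meet each other's closure inside \(Y_i\); hence the \(Y^{(K)}\) are clopen in \(Y_i\) and \(\HT^*(X_i,X_{i-1})\) decomposes as a genuine direct sum over the strata. Second, the paper avoids your absolute-versus-relative mismatch (and the tubular-neighborhood issue you flag at the end) by computing the pair directly via tautness and excision,
\[
\HT^*(X_i,X_{i-1})=\dirlim_U\,\HT^*(X_i,U)=\dirlim_U\,\HT^*(Y,Y\cap U)
\]
over \(T\)-invariant open neighborhoods~\(U\) of~\(X_{i-1}\), which yields \(R'\otimes H^*(X_i/T'',X_{i-1}/T'')\) without ever needing the absolute cohomology \(\HT^*(Y^{(K)})\) on its own.
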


This Cohen--Macaulay property
was already crucial
for Atiyah~\cite[Lecture~7]{Atiyah:1974}, hence also
for Bredon~\cite{Bredon:1974} and Franz--Puppe~\cite{FranzPuppe:2011}.

\begin{proof}
  Assume first that for all~\(x\in Y=X_{i}\setminus X_{i-1}\) the identity component~\(T_{x}^{0}\)
  is a fixed subtorus~\(T'\subset T\) and choose a torus complement~\(T''\subset T\).
  This gives a decomposition~\(R=H^{*}(BT')\otimes H^{*}(BT'')=:R'\otimes R''\).
  By tautness 
  and excision,
  \begin{align*}
    \HT^{*}(X_{i},X_{i-1}) &= \dirlim \HT^{*}(X_{i},U)
    = \dirlim \HT^{*}(Y,Y\cap U), \\
    \intertext{the direct limit being taken over all \(T\)-invariant open neighbourhoods~\(U\) of~\(X_{i-1}\) in~\(X_{i}\),
    which are cofinal among all open neighbourhoods.
    Using that \(\kk \) is a field of characteristic~\(0\) and that \(T''\) acts with finite isotropy, we get}
    &= \dirlim R' \otimes H_{T''}^{*}(Y,Y\cap U)
    = \dirlim R' \otimes H^{*}(Y/T'',(Y\cap U)/T'') \\
    &= R' \otimes H^{*}(X_{i}/T'',X_{i-1}/T'').
  \end{align*}
  (Here $H^{*}(-)$ denotes Alexander--Spanier cohomology because the orbit spaces 
  may fail to satisfy our assumptions on spaces.)
  The \(R''\)-action on  \(H^{*}(X_{i}/T'',X_{i-1}/T'')\) need not be trivial, but
  because this module 
  is finite-dimensional over~\(\kk \),
  there is a finite filtration of~\(\HT^{*}(X_{i},X_{i-1})\) such that
  each successive quotient is free over~\(R'\) with trivial \(R''\)-action.
  Such a quotient clearly is Cohen--Macaulay of dimension \(r-i\), hence
  so is \(\HT^{*}(X_{i},X_{i-1})\) by the long exact sequence for~\(\Ext\).
  Moreover, by the universal coefficient theorem (Proposition~\ref{thm:uct}),
  \begin{align*}
    \hHT_{*}(X_{i},X_{i-1}) &= R' \otimes \Ext_{R''}^{i}(H^{*}(X_{i}/T'',X_{i-1}/T''),R'')[i] \\
    &= R' \otimes H_{*}(X_{i}/T'',X_{i-1}/T'')[-i]
  \end{align*}
  is of the same algebraic type, so that this module is Cohen--Macaulay
  of dimension~\(r-i\) as well.
  (\Cf~Example~\ref{ex:homogeneous-space}.)

  In the general case,
  \(X_{i}\setminus X_{i-1}\) is
  the disjoint union of finitely many spaces~\(Y_{\alpha}\), \(\alpha\in A_{p}\)
  such that \(T_{x}^{0}= T_{\alpha}\) for all~\(x\in Y_{\alpha}\) and some
  subtorus~\(T_{\alpha}\subset T\) of rank~\(r-i\).
  Hence
  \begin{equation*}
    \HT^{*}(X_{i},X_{i-1}) = \bigoplus_{\alpha\in A_{p}} \HT^{*}(X_{i},X_{i}\setminus Y_{\alpha}),
  \end{equation*}
  and we can conclude by the same reasoning as before.
\end{proof}

We record a corollary of the preceding proof
for later use in Section~\ref{sec:partial-exactness}.
Part~\eqref{gg2} is a special case of the
localization theorem in equivariant cohomology.

\begin{lemma}
  \label{thm:localization-orbit-filtration}
  Let \(S\subset R\) be a multiplicative subset and \(0\le i\le r\).
  For~\(x\in X\), let \(J_{x}\) be the kernel of the projection~\(R\to H^{*}(BT_{x})\).
  \begin{enumerate}
  \item \label{gg1} If \(S\cap J_{x}=\emptyset\) for all~\(x\in X_{i}\setminus X_{i-1}\),
    then \(\HT^{*}(X_{i},X_{i-1})\) has no \(S\)-torsion.
  \item \label{gg2} If \(S\cap J_{x}\neq\emptyset\) for all~\(x\in X_{i}\setminus X_{i-1}\),
    then \(S^{-1}\HT^{*}(X_{i},X_{i-1})=0\).
  \end{enumerate}
\end{lemma}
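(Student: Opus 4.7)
The plan is to leverage the stratum decomposition and filtration constructed in the proof of Proposition~\ref{thm:stratum-cm}. Recall that $X_{i}\setminus X_{i-1}$ is a finite disjoint union of strata $Y_{\alpha}$ on which the identity component $T_{x}^{0}$ equals a fixed subtorus $T_{\alpha}\subset T$ of rank $r-i$, giving
\[
  \HT^{*}(X_{i},X_{i-1}) = \bigoplus_{\alpha} \HT^{*}(X_{i},X_{i}\setminus Y_{\alpha}),
\]
and each summand carries a finite filtration whose successive quotients are of the form $R'_{\alpha}\otimes V$, free over $R'_{\alpha}=H^{*}(BT_{\alpha})$, with the chosen polynomial complement $R''_{\alpha}$ acting through its augmentation. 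A preliminary observation is that, in characteristic zero, $J_{x}$ depends only on $T_{x}^{0}$: the map $H^{*}(BT_{x})\to H^{*}(BT_{x}^{0})$ is injective because $T_{x}/T_{x}^{0}$ is finite and we use rational coefficients, so $J_{x}$ equals the ideal $R\cdot R''_{\alpha,+}\lhd R$ whenever $T_{x}^{0}=T_{\alpha}$. In particular, the annihilator of each successive quotient contains this common ideal $J_{\alpha}$.

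For part~\eqref{gg1}, the hypothesis gives $S\cap J_{\alpha}=\emptyset$ for every stratum. On a successive quotient $R'_{\alpha}\otimes V$ the $R$-action factors through the domain $R/J_{\alpha}=R'_{\alpha}$; the image of any $s\in S$ in $R'_{\alpha}$ is nonzero, so multiplication by $s$ on the free $R'_{\alpha}$-module $R'_{\alpha}\otimes V$ is injective. Since the property of having no $S$-torsion is preserved under extensions (a two-line diagram chase), an induction along the finite filtration shows that every summand $\HT^{*}(X_{i},X_{i}\setminus Y_{\alpha})$ is $S$-torsion free, hence so is the direct sum.

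For part~\eqref{gg2}, pick $s_{\alpha}\in S\cap J_{\alpha}$ for each $\alpha$. Because $J_{\alpha}$ annihilates every successive quotient of the filtration of $\HT^{*}(X_{i},X_{i}\setminus Y_{\alpha})$, the element $s_{\alpha}$ acts as zero on each quotient, and therefore multiplication by $s_{\alpha}$ drops the filtration level by one at each application. If the filtration has length $N_{\alpha}$, then $s_{\alpha}^{N_{\alpha}}$ annihilates the entire summand, and since $s_{\alpha}^{N_{\alpha}}\in S$ this summand becomes zero after localizing at $S$. Summing over the finitely many strata yields $S^{-1}\HT^{*}(X_{i},X_{i-1})=0$.

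The main obstacle, modest as it is, lies in the preliminary identification $J_{x}=R\cdot R''_{\alpha,+}$, which crucially uses the characteristic-zero assumption on $\kk$ to replace $T_{x}$ by $T_{x}^{0}$ without enlarging the kernel; once this is secured, both statements reduce to routine module-theoretic propagation through the filtration already established in Proposition~\ref{thm:stratum-cm}.
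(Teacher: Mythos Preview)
Your proof is correct and follows essentially the same approach as the paper's: both use the filtration from Proposition~\ref{thm:stratum-cm}, whose successive quotients are copies of~\(H^{*}(BT_{x}^{0})\), and then propagate injectivity (for~\eqref{gg1}) or annihilation (for~\eqref{gg2}) of elements of~\(S\) along the filtration. Your version is simply more explicit, in particular about the identification \(J_{x}=\ker\bigl(R\to H^{*}(BT_{x}^{0})\bigr)\) in characteristic zero, which the paper uses tacitly when it passes from the hypothesis on~\(J_{x}\) to the action on~\(H^{*}(BT_{x}^{0})\).
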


\begin{proof}
  We have just seen that \(\HT^{*}(X_{i},X_{i-1})\) has a finite filtration
  by \(R\)-modules of the form~\(H^{*}(BT_{x}^{0})\) for some~\(x\in X_{i}\setminus X_{i-1}\).
  In the first case, any~\(a\in S\) acts injectively on each piece of the filtration,
  hence on~\(\HT^{*}(X_{i},X_{i-1})\).
  In the second case, each piece of the filtration is annihilated by some~\(a\in S\).
  The product of these elements then annihilates \(\HT^{*}(X_{i},X_{i-1})\).
\end{proof}

The filtration of~\(X\) by equivariant skeletons leads to a spectral sequence
converging to~\(\HT^{*}(X)\). The \(E_{1}\)~page is the non-augmented Atiyah--Bredon sequence 
\begin{equation}
  \label{eq:atiyah-bredon-truncated}
  \let\longrightarrow\rightarrow
  \HT^{*}(X_0)
  \longrightarrow \HT^{*}(X_1, X_0)[-1]
  \longrightarrow \cdots
  \longrightarrow \HT^{*}(X_r, X_{r-1})[-r],
\end{equation}
denoted by~\(\AB^{*}(X)\),
and the \(E_{2}\)~page therefore \(H^{*}(\AB^{*}(X))\). The corresponding spectral sequence
for equivariant homology is much easier to understand:

\begin{corollary}
  \label{thm:hHT-orbit-degeneration}
  The spectral sequence associated with the orbit filtration of~\(\hCT_{*}(X)\)
  and converging to~\(\hHT_{*}(X)\)
  degenerates at~\(E^{1}_{p}=\hHT_{*}(X_{p},X_{p-1})\).
\end{corollary}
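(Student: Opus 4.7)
The plan is to show that every differential on every page of this spectral sequence is forced to vanish by a Krull dimension comparison, relying on the Cohen--Macaulay structure furnished by Proposition~\ref{thm:stratum-cm} and the vanishing criterion of Lemma~\ref{thm:CM-map-0}. Since $E^1_p = \hHT_*(X_p, X_{p-1})$ is, by the proposition, either zero or Cohen--Macaulay of dimension~$r - p$, and the differential on the $k$-th page has shape $d_k \colon E^k_p \to E^k_{p-k}$, the source sits inside a module of dimension at most~$r - p$ while the target is a subquotient of a Cohen--Macaulay module of dimension~$r - (p - k) = r - p + k > r - p$. This dimension gap is what makes the argument run.

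I would proceed by induction on $k \ge 1$, with the inductive claim that $E^k = E^1$ (so in particular each $E^k_p$ is still zero or Cohen--Macaulay of dimension~$r - p$) and that $d_k = 0$. The base case $k = 1$ is immediate: Lemma~\ref{thm:CM-map-0} says any map from an $R$-module of dimension~$< r - p + 1$ into the Cohen--Macaulay module $E^1_{p-1}$ of dimension~$r - p + 1$ is trivial, and $E^1_p$ has dimension $\le r - p$. For the inductive step, once all lower differentials vanish, the pages are not modified, so $E^k_p = E^1_p$ remains Cohen--Macaulay of dimension~$r - p$; the same Lemma~\ref{thm:CM-map-0} applied to~$d_k$ against the Cohen--Macaulay target~$E^k_{p-k} = E^1_{p-k}$ of strictly larger dimension~$r - p + k$ again forces $d_k = 0$. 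Hence $E^\infty = E^1$, which is exactly degeneration at~$E^1$.

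The one thing I need to be careful about is convergence: because $\hCT_*(X)$ is not bounded below, naive convergence is not automatic. I would handle this by the replacement trick already recorded in Remark~\ref{rem:finiteness}, substituting an $R$-free, bounded-below dg~$R$-module which is $R$-homotopy equivalent to $\hCT_*(X)$ and which produces the same spectral sequence from~$E^1$ onward. I do not anticipate a real obstacle beyond this bookkeeping point; the entire argument is a direct consequence of two already-proved facts (the Cohen--Macaulay property of the strata and the triviality of maps into Cohen--Macaulay modules from small-dimensional sources), and no analogous statement for $\HT^*(X)$ is needed or expected, since on the cohomological side the dimensions go the wrong way and the differentials are genuinely nontrivial in general.
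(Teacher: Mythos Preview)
Your proposal is correct and follows essentially the same approach as the paper: the paper's proof is a two-sentence version of your explicit induction, invoking Proposition~\ref{thm:stratum-cm} and Lemma~\ref{thm:CM-map-0} to kill each differential~\(d^{s}\colon E^{1}_{p}\to E^{1}_{p-s}\), with the implicit understanding that once the lower differentials vanish the pages do not change. Your added remark about convergence via Remark~\ref{rem:finiteness} is appropriate bookkeeping that the paper leaves tacit at this point.
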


\begin{proof}
  By Proposition~\ref{thm:stratum-cm}, \(E^{1}_{p}\) is zero or Cohen--Macaulay of dimension~\(r-p\).
  Lemma~\ref{thm:CM-map-0} therefore rules out
  any non-zero higher differential~\(d^{s}\colon E^{1}_{p}\to E^{1}_{p-s}\). 
\end{proof}

\begin{remark}
  \label{rem:duflot}
  By extending Poincaré--Alexander--Lefschetz duality to the equivariant setting,
  Corollary~\ref{thm:hHT-orbit-degeneration} can be related
  to a result of J.~Duflot~\cite[Thm.~1]{Duflot:1983}
  involving the equivariant cohomology
  of the complements~\(X\setminus X_{i}\) of the orbit filtration.
  We will elaborate on this in future work.
\end{remark}

\begin{corollary}
  \label{thm:Ext-hHT-zero}
  \(\Ext_{R}^{q}(\hHT_{*}(X_{p},X_{p'}),R)=0\)  for~\(q\le p'\) or~\(q>p\).
\end{corollary}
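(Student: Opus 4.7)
The plan is to combine the Cohen--Macaulay property of the stratum pieces (Proposition~\ref{thm:stratum-cm}) with a spectral-sequence decomposition of $\hHT_{*}(X_{p},X_{p'})$, then apply Proposition~\ref{thm:dim-depth-ext} to each subquotient.

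First, I would apply the orbit filtration not to $\hCT_{*}(X)$ but to the relative complex $\hCT_{*}(X_{p},X_{p'})$. The subcomplexes $\hCT_{*}(X_{i},X_{p'})$ for $p'\le i\le p$ form an increasing filtration whose associated spectral sequence has
\begin{equation*}
    E^{1}_{i} = \hHT_{*}(X_{i},X_{i-1}) \qquad (p'<i\le p),
\end{equation*}
and zero otherwise, converging to $\hHT_{*}(X_{p},X_{p'})$. By Proposition~\ref{thm:stratum-cm}, each $E^{1}_{i}$ is zero or Cohen--Macaulay of dimension $r-i$. The argument of Corollary~\ref{thm:hHT-orbit-degeneration} applies verbatim: any higher differential $d^{s}\colon E^{1}_{i}\to E^{1}_{i-s}$ has source of dimension $r-i$ strictly less than the dimension $r-i+s$ of the target, which is Cohen--Macaulay, so Lemma~\ref{thm:CM-map-0} forces $d^{s}=0$. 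Consequently the spectral sequence degenerates at $E^{1}$, so $\hHT_{*}(X_{p},X_{p'})$ carries an $R$-module filtration $0=F_{p'}\subset F_{p'+1}\subset\dots\subset F_{p}=\hHT_{*}(X_{p},X_{p'})$ with $F_{i}/F_{i-1}\cong\hHT_{*}(X_{i},X_{i-1})$.

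Next, I would translate the Cohen--Macaulay property into $\Ext$-vanishing. By Proposition~\ref{thm:dim-depth-ext}\,(3), for a Cohen--Macaulay $R$-module of dimension $r-i$ the only potentially non-zero $\Ext^{q}(-,R)$ is at $q=i$. Hence $\Ext_{R}^{q}(F_{i}/F_{i-1},R)=0$ for $q\ne i$ and for every $i$ in the range $p'<i\le p$.

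Finally, I would argue by induction on $i$ using the long exact $\Ext$-sequences coming from the short exact sequences $0\to F_{i-1}\to F_{i}\to F_{i}/F_{i-1}\to0$. Starting from $F_{p'}=0$, the inductive hypothesis is that $\Ext_{R}^{q}(F_{i-1},R)=0$ unless $p'<q\le i-1$. Combining this with the vanishing of $\Ext_{R}^{q}(F_{i}/F_{i-1},R)$ outside $q=i$, the long exact sequence forces $\Ext_{R}^{q}(F_{i},R)=0$ unless $p'<q\le i$. Taking $i=p$ gives the claim.

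The only non-routine point is the degeneration step in the relative setting; once one observes that the dimension-increasing direction of the differentials is exactly what Lemma~\ref{thm:CM-map-0} obstructs, everything else is an induction along the filtration.
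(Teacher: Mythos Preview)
Your proof is correct and follows essentially the same route as the paper: obtain a filtration of \(\hHT_{*}(X_{p},X_{p'})\) with successive quotients \(\hHT_{*}(X_{i},X_{i-1})\), then use the Cohen--Macaulay property (Proposition~\ref{thm:stratum-cm}) together with the long exact \(\Ext\)-sequence. The only difference is cosmetic: the paper invokes Corollary~\ref{thm:hHT-orbit-degeneration} to assert the existence of the filtration, whereas you re-run the degeneration argument explicitly in the relative setting; both amount to the same application of Lemma~\ref{thm:CM-map-0}.
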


\begin{proof}
  By Corollary~\ref{thm:hHT-orbit-degeneration}, \(\hHT_{*}(X_{p},X_{p'})\) has a filtration
  whose successive quotients are of the form~\(\hHT_{*}(X_{i},X_{i-1})\), \(p\le i<p'\).
  The claim therefore follows from Proposition~\ref{thm:stratum-cm} and the long exact sequence
  for~\(\Ext\).
\end{proof}

\begin{remark}
  \label{rem:loc-contractible}
  For a continuous torus action on a space~\(X\) that is not known to be a
  \(T\)-CW~complex it is not true, in general, that each~\(X_i\) is locally 
  contractible even when \(X\) is a topological manifold. One can easily avoid 
  this technical difficulty as follows.

  The Alexander--Spanier cohomology of a pair~\((A,B)\) of closed invariant subspaces
  can be expressed as
  \begin{equation}
    \veeHT^{*}(A, B) = \dirlim \HT^{*}(U, V)
  \end{equation}
  where \((U, V)\) ranges over the open invariant
  neighbourhood pairs of~\((A, B)\). So one defines
  \begin{align}
    \veeCT^{*}(A, B) &= \dirlim \CT^{*}(U, V) \\
    \intertext{and}
    \veehCT_{*}(A, B) &= \Hom_{R}(\veeCT^{*}(A, B), R).
  \end{align}
  The proofs in
  this paper then proceed in exactly the same way, because the
  localization theorem requires Alexander--Spanier cohomology in this
  generality. (See, \eg, \cite[Ex.~3.2.11]{AlldayPuppe:1993} for details.)
  Thus the results of this paper hold using this version of equivariant
  Alexander--Spanier cohomology and homology if \(X\) is Hausdorff, second-countable,
  locally compact and locally contractible even if some of the equivariant
  skeletons are not locally contractible.
\end{remark}

\subsection{Comparing the spectral sequences}
\label{sec:comparison-ss}

The ground field~\(\kk\) is still assumed to be of characteristic~\(0\), and
the assumptions on the orbit filtration~\((X_{i})\) stated in Section~\ref{sec:orbit-filtration}
remain in force.

The aim of this and the following sections is to prove the following result:

\begin{theorem}
  \label{thm:exthab-ss}
  For any \(T\)-space~\(X\), 
  the following two spectral sequences converging to~\(\HT^{*}(X)\) are naturally isomorphic from the \(E_{2}\)~page on:
  \begin{enumerate}
  \item The one induced by the orbit filtration with~\(E_{1}^{p}=\HT^{*}(X_{p},X_{p-1})\),
  \item The universal coefficient spectral sequence with~\(E_{2}^{p}=\Ext_{R}^{p}(\hHT_{*}(X),R)\).
  \end{enumerate}
\end{theorem}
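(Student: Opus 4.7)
My plan is to realize both spectral sequences as arising from two filtrations on a common double complex and to produce the comparison using the Cohen--Macaulay collapse of each orbit stratum. Let $\III$ denote the bounded injective resolution of $R$ used in the proof of Proposition~\ref{thm:uct}, and set $D = \Hom_R(\hCT_*(X),\III)$. Since $\hCT_*(X)$ is free over~$R$, the quasi-isomorphism $R\to\III$ induces a quasi-isomorphism $\Hom_R(\hCT_*(X),R)\to D$; composing with the canonical quasi-isomorphism $\CT^*(X)\to\Hom_R(\hCT_*(X),R)$ from the proof of Proposition~\ref{thm:uct} identifies $H^*(\mathrm{Tot}\,D)$ with $\HT^*(X)$. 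The double complex $D$ carries two filtrations: the $\III$-filtration and the orbit filtration $F^pD=\Hom_R(\hCT_*(X)/F_{p-1}\hCT_*(X),\III)$ inherited from $\hCT_*(X)$.

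Filtering $D$ by $\III$-degree reproduces the universal coefficient spectral sequence with $E_2^p=\Ext_R^p(\hHT_*(X),R)$, exactly as in the proof of Proposition~\ref{thm:uct}, giving spectral sequence~(2). Filtering by the orbit filtration instead has associated graded $\Hom_R(\hCT_*(X_p,X_{p-1}),\III)$ at level $p$. Its cohomology is computed by a second application of Proposition~\ref{thm:uct} to the pair $(X_p,X_{p-1})$; by Proposition~\ref{thm:stratum-cm}, the resulting $\Ext$-page is concentrated in a single degree and collapses to
\[
\Ext_R^p(\hHT_*(X_p,X_{p-1}),R)\cong \HT^*(X_p,X_{p-1}).
\]
By naturality, the $E_1$ page is therefore the Atiyah--Bredon complex $\AB^*(X)$, and $E_2=H^*(\AB^*(X))$. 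The quasi-isomorphism $\CT^*(X)\to D$ preserves the orbit filtrations (an element of $\CT^*(X,X_{p-1})$ vanishes on $\hCT_*(X_{p-1})\subset\hCT_*(X)$, hence lies in $F^pD$) and is an isomorphism on $E_1$, so this spectral sequence coincides with the orbit filtration spectral sequence~(1) from $E_1$ on.

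The key remaining step is to produce a natural isomorphism between the two spectral sequences on $D$ from the $E_2$ page on. To this end, I would apply $\Ext_R^*(-,R)$ to the filtration of $\hHT_*(X)$ whose associated graded is $\bigoplus_p\hHT_*(X_p,X_{p-1})$ by Corollary~\ref{thm:hHT-orbit-degeneration}. This produces a spectral sequence with
\[
E_1^{p,q}=\Ext_R^q(\hHT_*(X_p,X_{p-1}),R)\Rightarrow\Ext_R^{p+q}(\hHT_*(X),R),
\]
which by Proposition~\ref{thm:stratum-cm} is concentrated on the diagonal $q=p$ and collapses at $E_2$. A naturality argument identifies its $d_1$ with the Atiyah--Bredon differential, yielding the natural $R$-module isomorphism $\Ext_R^p(\hHT_*(X),R)\cong H^p(\AB^*(X))$ between the $E_2$ pages. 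A diagram chase through the bifiltration on $D$ by $\III$-degree and orbit filtration then shows this isomorphism intertwines all higher differentials.

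The main obstacle, I expect, is this final comparison: showing that the $E_2$ isomorphism extends compatibly to an isomorphism of spectral sequences from $E_2$ on, rather than being merely a pointwise isomorphism of $E_2$ pages. I anticipate this to follow by careful tracking of the bifiltration on $D$ and the fact that the inner spectral sequences (governed by Cohen--Macaulayness) collapse early enough to determine the higher differentials unambiguously.
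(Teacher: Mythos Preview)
Your setup is exactly the paper's: the same complex $D=\Hom_R(\hCT_*(X),\III)$ with the orbit filtration~$\OO$ and the $\III$-degree filtration~$\II$, and the same identification of each spectral sequence separately (including the filtration-preserving quasi-isomorphism $\CT^*(X)\to D$). The gap is precisely the one you flag yourself, and your proposed fix does not close it.

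Having two filtrations on one complex does \emph{not} give a map of spectral sequences; in general the spectral sequences for~$\OO$ and~$\II$ are related only through their common abutment. Your auxiliary argument---filtering $\hHT_*(X)$ by Corollary~\ref{thm:hHT-orbit-degeneration} and applying $\Ext_R^*(-,R)$---does produce an abstract $R$-module isomorphism of $E_2$~pages, but there is no mechanism linking this isomorphism to the higher differentials on~$D$. ``Tracking the bifiltration'' is not a technique; what a bifiltration typically yields is a spectral sequence from one $E_2$ to the other's $E_\infty$, not an identification of the two spectral sequences.

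The paper's device is a third, \emph{diagonal} filtration
\[
\DD^p = \OO^p\cap\II^p = \Hom_R\bigl(\hCT_*(X,X_{p-1}),\III^{\ge p}\bigr).
\]
The inclusions $\DD\hookrightarrow\OO$ and $\DD\hookrightarrow\II$ are honest maps of filtered complexes, hence induce maps of spectral sequences. The paper then shows, via intermediate spectral sequences on each $E_0^p$ and the vanishing $\Hom_R(\hHT_*(X,X_p),I^q)=0$ for $p\ge q$ (which uses the Cohen--Macaulay property), that $\DD\to\OO$ is an isomorphism from~$E_1$ on and $\DD\to\II$ is an isomorphism from~$E_2$ on. This zig-zag through~$\DD$ is the missing idea; once you have it, the rest of your outline is correct.
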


\begin{remark}
  The assumptions on~\(\kk\) and on the filtration~\((X_{i})\)
  are only required to apply Proposition~\ref{thm:stratum-cm}
  and Corollary~\ref{thm:hHT-orbit-degeneration}.
  All results in Sections \ref{sec:orbit-filtration}~and~\ref{sec:comparison-ss}
  are valid more generally for any field~\(\kk\) and any filtration~\((X_{i})\), \(1\le i\le r\),
  satisfying our assumption on \(T\)-spaces
  and such that \(\HT^{*}(X_{i},X_{i-1})\)~and~\(\hHT_{*}(X_{i},X_{i-1})\) are zero or Cohen--Macaulay of dimension~\(r-i\).
  We will use this elsewhere to treat actions of~\(p\)-tori~\((\Z_{p})^{r}\), which need some further considerations.
\end{remark}


To begin with the proof, let
\begin{equation}
  \label{resolution-I}
  0 \longrightarrow R
    \longrightarrow I^{0}
    \longrightarrow I^{1}
    \longrightarrow \cdots
    \longrightarrow I^{r}
    \longrightarrow 0
\end{equation}
be the minimal (graded) injective resolution of~\(R\), \cf~\cite[Sec.~3.6]{BrunsHerzog:1998}.
Recall that this resolution is constructed inductively
by setting \(M^{0}=R\), \(I^{i}\) equal to the (graded) injective hull of~\(M^{i}\)
and \(M^{i+1}=I^{i}/M^{i}\). In particular, there are short exact sequences
\begin{equation}\label{M-I-M}
  0 \longrightarrow M^{i}
    \longrightarrow I^{i}
    \longrightarrow M^{i+1}
    \longrightarrow 0.
\end{equation}
Let \(\III^{p}=I^{p}[p]\) and \(\III=\bigoplus_{p=0}^{r}\III^{p}\) be the total complex associated to the injective resolution~\eqref{resolution-I}.

On the dg~\(R\)-module~%
\(
  \Hom_{R}(\hCT_{*}(X),\III)
\)
we introduce two decreasing filtrations \(\OO\)~and~\(\II\) with
\begin{subequations}
\begin{align}
  \label{eq:def-OO}
  \OO^{p} &= \Hom_{R}(\hCT_{*}(X,X_{p-1}),\III), \\
  \label{eq:def-II}
  \II^{p} &= \Hom_{R}(\hCT_{*}(X),\III^{\ge p}),
\intertext{where the subcomplex~\(\III^{\ge p}=\bigoplus_{i\ge p}\III^{i}\) of~\(\III\)
is the total complex of the minimal injective resolution of~\(M^{p}[p]\).
Because we cannot compare these filtrations 
directly, we additionally consider the ``diagonal'' filtration~\(\DD\) with}
  \label{eq:def-DD}
  \DD^{p} &= \OO^{p}\cap \II^{p} = \Hom_{R}(\hCT_{*}(X,X_{p-1}),\III^{\ge p}).
\end{align}
\end{subequations}
We are going to show
in Sections \ref{sec:comparing-D-O}~and~\ref{sec:comparing-D-I}
that the maps of spectral sequences
associated to the inclusions \(\DD\to\OO\) and \(\DD\to\II\)
become isomorphisms from, respectively, the \(E_{1}\)~and~\(E_{2}\) pages on.

The orbit filtration also induces a decreasing filtration on~\(\CT^{*}(X)\),
which by abuse of notation we denote by~\(\OO\) as well.

\begin{lemma}
  \label{quiso-Hom-R-I}
  The canonical map
  \begin{equation*}
    \CT^{*}(X) \to \Hom_{R}(\hCT_{*}(X),\III)
  \end{equation*}
  is a quasi-isomorphism preserving the filtrations~\(\OO\).
  Moreover, the associated map of spectral sequences
  is an isomorphism from the \(E_{1}\)~page on.
\end{lemma}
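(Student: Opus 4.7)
The plan is to factor the canonical map as
\[ \CT^{*}(X) \xrightarrow{\phi} \Hom_{R}(\hCT_{*}(X), R) \xrightarrow{\psi} \Hom_{R}(\hCT_{*}(X), \III), \]
where $\phi$ is the double-$R$-dual map $\alpha \mapsto (\beta \mapsto \beta(\alpha))$ and $\psi$ is induced by the injective resolution $R \to \III$. The strategy is to verify that each factor is a quasi-isomorphism preserving the $\OO$-filtrations, and then transfer the same argument to every associated graded piece to obtain the $E_{1}$ isomorphism.

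First I would observe that $\phi$ is a quasi-isomorphism: this is precisely the argument at the end of the proof of Proposition~\ref{thm:uct}, which rests on Assumption~\ref{ass:H-finite}. Next, $\psi$ is a quasi-isomorphism because, by Remark~\ref{rem:finiteness}, $\hCT_{*}(X)$ is $R$-homotopy equivalent to a bounded-below, $R$-free replacement $F$; on such a replacement $\Hom_{R}(F, -)$ sends the quasi-isomorphism $R \to \III$ of bounded-below complexes to a quasi-isomorphism. For filtration preservation I would use that $\CT^{*} = C^{*} \otimes R$ makes $0 \to \CT^{*}(X, X_{p-1}) \to \CT^{*}(X) \to \CT^{*}(X_{p-1}) \to 0$ an $R$-split short exact sequence, whose $R$-dual $0 \to \hCT_{*}(X_{p-1}) \to \hCT_{*}(X) \to \hCT_{*}(X, X_{p-1}) \to 0$ is thus still short exact; any $\alpha \in \CT^{*}(X, X_{p-1})$ then defines an $R$-functional on $\hCT_{*}(X)$ vanishing on $\hCT_{*}(X_{p-1})$, placing its image in $\OO^{p}$ on the right.

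For the $E_{1}$ statement I would identify $\mathrm{gr}^{p}\, \OO$ on the right. Since each $I^{q}$ is an injective $R$-module, $\Hom_{R}(-, I^{q})$ sends the short exact sequence $0 \to \hCT_{*}(X_{p}, X_{p-1}) \to \hCT_{*}(X, X_{p-1}) \to \hCT_{*}(X, X_{p}) \to 0$ to a short exact sequence, identifying $\mathrm{gr}^{p}\, \OO$ on the right with $\Hom_{R}(\hCT_{*}(X_{p}, X_{p-1}), \III)$; on the left $\mathrm{gr}^{p}\, \OO = \CT^{*}(X_{p}, X_{p-1})$ by definition. The induced map on $\mathrm{gr}^{p}$ is then the canonical double-dual map for the pair $(X_{p}, X_{p-1})$, and the same two-step factorization applies verbatim (using Assumption~\ref{ass:H-finite} for the pair). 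I expect the main technical concern to be justifying that $\psi$ is a quasi-isomorphism despite $\hCT_{*}(X)$ being unbounded below and not obviously free as an $R$-module; this is exactly where Remark~\ref{rem:finiteness} is required, both for the global statement and for every pair $(X_{p}, X_{p-1})$.
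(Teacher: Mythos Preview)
Your approach is correct and essentially the same as the paper's: the factorization through \(\Hom_{R}(\hCT_{*}(X),R)\) is precisely what the paper's ``cf.~the proof of Proposition~\ref{thm:uct}'' unpacks, and your identification of the graded pieces via injectivity of the~\(I^{q}\) matches the paper's. The paper is slightly more economical in that it argues only on the \(E_{0}\)~pages --- since the filtration~\(\OO\) is bounded, the \(E_{1}\)~isomorphism already forces the global map to be a quasi-isomorphism, so your separate treatment of the global case is redundant (though not wrong).
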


\begin{proof}
  The map above is clearly filtration-preserving.
  Because \(\III\) is the direct sum of injective modules,
  the induced map on the \(E_{0}\)~page is
  \begin{equation*}
    \CT^{*}(X_{p},X_{p-1}) \to \Hom_{R}(\hCT_{*}(X_{p},X_{p-1}),\III).
  \end{equation*}
  This map is a quasi-isomorphism, \cf~the proof of Proposition~\ref{thm:uct}.
\end{proof}

Note that
\begin{equation}
  E_{2}^{p}(\II)=\Ext_{R}^{p}(\hHT_{*}(X),R)
\end{equation}
because each module~\(I^{p}\) is injective.
Together with the comparisons of \(\DD\),~\(\OO\) and~\(\II\) below
this completes the proof of Theorem~\ref{thm:exthab-ss}
as the naturality in~\(X\) is clear by construction.
To prepare for the comparisons, we state two lemmas.

\begin{lemma}\label{X-Xp-Iq-zero}
  \(\Hom_{R}(\hHT_{*}(X,X_{p}),I^{q})=0\) for~\(p\ge q\).
\end{lemma}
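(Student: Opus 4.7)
The plan is to filter $\hHT_*(X,X_p)$ and reduce the statement to a purely algebraic vanishing for the minimal graded injective resolution of~$R$. The argument of Corollary~\ref{thm:hHT-orbit-degeneration} applies verbatim to the relative chain complex~$\hCT_*(X,X_p)$: the orbit filtration yields a spectral sequence with~$E^1_i=\hHT_*(X_i,X_{i-1})$ for~$p<i\le r$ converging to~$\hHT_*(X,X_p)$, and Proposition~\ref{thm:stratum-cm} together with Lemma~\ref{thm:CM-map-0} forces this spectral sequence to collapse. This equips $\hHT_*(X,X_p)$ with a finite filtration whose successive quotients are the modules~$\hHT_*(X_i,X_{i-1})$, $p<i\le r$. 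Since $I^q$ is injective, $\Hom_R(-,I^q)$ is exact, so it is enough to show~$\Hom_R(\hHT_*(X_i,X_{i-1}),I^q)=0$ whenever $q\le p<i$.

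Iterating the filtration argument in the proof of Proposition~\ref{thm:stratum-cm}, each $\hHT_*(X_i,X_{i-1})$ admits a further finite filtration whose successive quotients have the form~$R/\mathfrak{p}$ (up to degree shifts), where~$\mathfrak{p}\subset R$ is the kernel of the restriction~$R\to H^*(BT')$ for the identity component~$T'$ of some stratum-isotropy group of rank~$r-i$; in particular $\mathrm{ht}(\mathfrak{p})=i$. Using exactness of~$\Hom_R(-,I^q)$ one more time, the lemma is reduced to~$\Hom_R(R/\mathfrak{p},I^q)=0$ whenever $\mathrm{ht}(\mathfrak{p})=i$ and~$q<i$.

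This last statement is a standard property of the minimal graded injective resolution, which for the polynomial ring~$R$ decomposes as~$I^q=\bigoplus_{\mathrm{ht}(\mathfrak{q})=q}E(R/\mathfrak{q})$ summed over homogeneous primes. For any homogeneous primes~$\mathfrak{p},\mathfrak{q}$ one has $\Hom_R(R/\mathfrak{p},E(R/\mathfrak{q}))=\ann_{E(R/\mathfrak{q})}(\mathfrak{p})$, which vanishes unless~$\mathfrak{p}\subseteq\mathfrak{q}$ (otherwise any~$a\in\mathfrak{p}\setminus\mathfrak{q}$ acts invertibly on~$E(R/\mathfrak{q})$). A non-zero contribution therefore forces $\mathrm{ht}(\mathfrak{q})\ge\mathrm{ht}(\mathfrak{p})=i$, i.e.~$q\ge i$, contradicting~$q<i$.

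The only step that calls for care is transferring Corollary~\ref{thm:hHT-orbit-degeneration} to the pair~$(X,X_p)$, but this is the same spectral sequence argument applied to a subcomplex; everything else is bookkeeping and a classical support computation. No step is a real obstacle, though one must use the minimality (and hence the explicit prime-decomposition) of the injective resolution of~$R$, not merely some arbitrary injective resolution — this is the reason~$I^q$ was defined via the minimal one in~\eqref{resolution-I}.
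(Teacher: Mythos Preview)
Your proof is correct, but it takes a different route from the paper's. Both arguments begin by filtering \(\hHT_{*}(X,X_{p})\) via the orbit filtration so that the successive quotients are the Cohen--Macaulay modules~\(\hHT_{*}(X_{i},X_{i-1})\) with \(i>p\); this is precisely the content of Corollary~\ref{thm:Ext-hHT-zero} in the paper. The divergence is in how the vanishing of \(\Hom_{R}(-,I^{q})\) is extracted from this. The paper argues abstractly: since \(I^{q}\) is the injective hull of~\(M^{q}\), any nonzero homomorphism can be pushed into~\(M^{q}\) by multiplying by a suitable~\(a\in R\), and then the short exact sequences~\eqref{M-I-M} give a dimension shift \(\Hom_{R}(-,M^{q})\cong\Ext_{R}^{q}(-,R)\), which vanishes by Corollary~\ref{thm:Ext-hHT-zero}. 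You instead invoke the explicit Bass-number decomposition \(I^{q}=\bigoplus_{\mathrm{ht}\,\mathfrak{q}=q}E(R/\mathfrak{q})\), valid because \(R\) is Gorenstein, and then a support argument on~\(E(R/\mathfrak{q})\). Your approach is more transparent from the commutative-algebra side and makes the role of minimality of the resolution visible, whereas the paper's dimension-shifting avoids the structure theory of injectives entirely and uses only the essential-extension property. Your further filtration of each stratum into copies of~\(R/\mathfrak{p}\) is correct but not strictly needed: once you know the stratum is Cohen--Macaulay of dimension~\(r-i\), its support already lies in height~\(\ge i\), which is all the final step requires.
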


\begin{proof}
  Assume that \(f\in\Hom_{R}(\hHT_{*}(X,X_{p}),I^{q})\) is non-zero.
  Because \(I^{q}\) is the graded injective hull of~\(M^{q}\),
  there is an~\(a\in R\) such that \(a f(y)\in M^{q}\) for all \(y\in\hHT_{*}(X,X_{p})\)
  and \(0\ne af\in\Hom_{R}(\hHT_{*}(X,X_{p}),M^{q})\).
  But this is impossible:
  Using Corollary~\ref{thm:Ext-hHT-zero} and
  the long exact \(\Ext\)~sequence coming from~\eqref{M-I-M},
  one can show by induction on~\(q\) that
  \begin{equation*}
    \Hom_{R}(\hHT_{*}(X,X_{p}),M^{q}) = \Ext_{R}^{1}(\hHT_{*}(X,X_{p}),M^{q-1})
    = \cdots 
    = \Ext_{R}^{q}(\hHT_{*}(X,X_{p}),R)
  \end{equation*}
  vanishes for~\(p\ge q\).
\end{proof}

\begin{lemma}
  \label{thm:Ext-Hom-M}
  The inclusion~\(M^{p}\to I^{p}\) induces a quasi-isomorphism
  \begin{equation*}
    \Hom_{R}(\hHT_{*}(X_{p},X_{p-1}),M^{p}[p]) \to \Hom_{R}(\hHT_{*}(X_{p},X_{p-1}),\III).
  \end{equation*}
\end{lemma}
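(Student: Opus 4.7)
The plan is to prove this by showing both complexes have cohomology concentrated in cohomological degree~$p$, and that the induced map in that degree is an isomorphism. Write $N=\hHT_*(X_p,X_{p-1})$ for brevity, and assume $N\ne0$ (otherwise the statement is vacuous). Since $\III$ is the total complex of an injective resolution of~$R$, the cohomology of the target is $\Ext_R^*(N,R)$; by Proposition~\ref{thm:stratum-cm} the module~$N$ is Cohen--Macaulay of dimension~$r-p$, so Proposition~\ref{thm:dim-depth-ext} tells us $\Ext_R^i(N,R)=0$ for $i\ne p$. The source $\Hom_R(N,M^p[p])$ carries no differential and sits in cohomological degree~$p$. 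Thus everything reduces to checking that the induced map on $H^p$ is an isomorphism.

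In cohomological degree~$p$ the image of the source lies in $\ker(\Hom_R(N,I^p)\to\Hom_R(N,I^{p+1}))=\Hom_R(N,M^p)$, and $\Ext_R^p(N,R)$ is precisely this kernel modulo the image of $\Hom_R(N,I^{p-1})\to\Hom_R(N,M^p)$. Hence the claim reduces to the single vanishing $\Hom_R(N,I^{p-1})=0$.

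This is the direct analog of Lemma~\ref{X-Xp-Iq-zero} with $\hHT_*(X,X_p)$ replaced by~$N$, and the same argument proves it. By Corollary~\ref{thm:Ext-hHT-zero} we have $\Ext_R^q(N,R)=0$ for all $q\le p-1$; one then inductively establishes $\Hom_R(N,I^q)=0$ for $q\le p-1$ by invoking the injective hull property (to convert a hypothetical nonzero element of $\Hom_R(N,I^q)$ to a nonzero element of $\Hom_R(N,M^q)$) together with the long exact $\Ext$~sequences coming from~\eqref{M-I-M} (which, using the inductive hypothesis to kill the preceding $\Hom$ term, identify $\Hom_R(N,M^q)$ with $\Ext_R^q(N,R)$). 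The main obstacle is no more than threading this induction correctly; all the necessary machinery is already in place, and once $\Hom_R(N,I^{p-1})=0$ is established the quasi-isomorphism follows at once.
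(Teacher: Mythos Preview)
Your proof is correct and follows essentially the same route as the paper's. The paper invokes Lemma~\ref{X-Xp-Iq-zero} (applied with~\(X_{p}\) in place of~\(X\)) to kill \(\Hom_{R}(N,\III^{<p})\), then observes that \(M^{p}[p]\to\III^{\ge p}\) is an injective resolution so that the remaining cohomology is \(\Hom_{R}(N,M^{p})\) in degree~\(p\); you unpack the same mechanism by hand, reducing to the single vanishing \(\Hom_{R}(N,I^{p-1})=0\) and re-deriving it via the injective-hull argument.
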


\begin{proof}
  The \(R\)-module~%
  \(\Hom_{R}(\hHT_{*}(X_{p},X_{p-1}),\III^{<p})\) vanishes by Lemma~\ref{X-Xp-Iq-zero},
  and so does \(\Ext_{R}^{q}(\hHT_{*}(X_{p},X_{p-1}),R)\) for~\(q\ne p\)
  by Proposition~\ref{thm:stratum-cm}.
  As \(M^{p}[p]\to\III^{\ge p}\) is an injective resolution,
  we find
  \(\Ext_{R}^{p}(\hHT_{*}(X_{p},X_{p-1}),R)=\Hom_{R}(\hHT_{*}(X_{p},X_{p-1}),M^{p})\).
\end{proof}

\subsection{Comparing \texorpdfstring{\(\DD\)}{D}~and~\texorpdfstring{\(\OO\)}{O}}
\label{sec:comparing-D-O}

Recall that each~\(\III^{p}=I^{q}[q]\) is an injective \(R\)-module, so that \(\Hom_{R}(-,\III^{q})\) is an exact functor
and hence commutes with taking homology.
We will use this throughout the proof without mentioning.
For example, there is a short exact sequence
\begin{multline}
  0 \to
  \Hom_{R}(\hCT_{*}(X,X_{p}),\III^{q}) \to
  \Hom_{R}(\hCT_{*}(X,X_{p-1}),\III^{q}) \\ \to
  \Hom_{R}(\hCT_{*}(X_{p},X_{p-1}),\III^{q}) \to
  0
\end{multline}
(which in this case also follows from the freeness of equivariant chain complexes over~\(R\)).
From this and the definition of the filtration~\(\OO\) we get
the second of the following two \(E_{0}\)~pages. The first follows by additionally
observing that elements in~\(\DD^{p+1}\) do not map to~\(\III^{p}\).
\begin{align}
  \label{eq:E0-DD}
  E_{0}^{p}(\DD)[p] &= \Hom_{R}(\hCT_{*}(X,X_{p-1}),\III^{p})
    \oplus 
    \Hom_{R}(\hCT_{*}(X_{p},X_{p-1}),\III^{>p}), \\
  E_{0}^{p}(\OO)[p] &= \Hom_{R}(\hCT_{*}(X_{p},X_{p-1}),\III).
\end{align}
Filtering both \(E_{0}\)~pages by \(I\)-degree gives spectral sequences converging
to \(E_{1}^{p}(\DD)\) and~\(E_{1}^{p}(\OO)\), respectively.
For the \(E_{1}\)~pages of these intermediate spectral sequences
we obtain
\begin{align}
  \label{eq:E1-E0-DD-II}
  E_{1}^{q}E_{0}^{p}(\DD)[p] &=
    \begin{cases}
      0 & \text{if \(q<p\),} \\
      \Hom_{R}(\hHT_{*}(X,X_{p-1}),I^{p}) & \text{if \(q=p\),} \\
      \Hom_{R}(\hHT_{*}(X_{p},X_{p-1}),I^{q}) & \text{if \(q>p\),}
    \end{cases} \\
  \label{eq:E1-E0-OO-II}
  E_{1}^{q}E_{0}^{p}(\OO)[p] &= \Hom_{R}(\hHT_{*}(X_{p},X_{p-1}),I^{q}) \\
    \notag &= 
    \begin{cases}
      0 & \text{if \(q<p\),} \\
      \Hom_{R}(\hHT_{*}(X_{p},X_{p-1}),I^{q}) &  \text{if \(q\ge p\),}
    \end{cases}
\end{align}
where the last identity follows from Lemma~\ref{X-Xp-Iq-zero}.
Applying \(\Hom_{R}(-,I^{p})\) to the long exact sequence for the triple~\((X,X_{p},X_{p-1})\)
and observing again Lemma~\ref{X-Xp-Iq-zero}, we find
that the map
\begin{equation}
  \Hom_{R}(\hHT_{*}(X,X_{p-1}),I^{p})
  \to \Hom_{R}(\hHT_{*}(X_{p},X_{p-1}),I^{p})
\end{equation}
is an isomorphism,
hence so is the map~\(E_{1}^{p}E_{0}^{q}(\DD)\to E_{1}^{p}E_{0}^{q}(\OO)\)
and therefore \(E_{1}^{q}(\DD)\to E_{1}^{q}(\OO)\) as well.

\subsection{Comparing \texorpdfstring{\(\DD\)}{D}~and~\texorpdfstring{\(\II\)}{I}}
\label{sec:comparing-D-I}

In addition to~\eqref{eq:E0-DD}, which we now write as
\begin{align}
  E_{0}^{q}(\DD)[q] &= \Hom_{R}(\hCT_{*}(X,X_{q-1}),\III^{q})
  \oplus \Hom_{R}(\hCT_{*}(X_{q},X_{q-1}),\III^{>q}), \\
\intertext{we have}
  E_{0}^{q}(\II)[q] &= \Hom_{R}(\hCT_{*}(X),\III^{q}).
\end{align}
The filtration~\(\OO\) induces filtrations on both \(E_{0}^{q}(\DD)\) and \(E_{0}^{q}(\II)\). 
This gives spectral sequences converging
to \(E_{1}^{q}(\DD)\) and~\(E_{1}^{q}(\II)\), respectively,
with \(E_{0}\)~pages
\begin{align}
  E_{0}^{p}E_{0}^{q}(\DD)[p+q] &= \begin{cases}
    0 & \text{if \(p<q\),} \\
    \Hom_{R}(\hCT_{*}(X_{p},X_{p-1}),\III^{\ge p}) & \text{if \(p=q\),} \\
    \Hom_{R}(\hCT_{*}(X_{p},X_{p-1}),\III^{q}) & \text{if \(p>q\),}
  \end{cases} \\
  E_{0}^{p}E_{0}^{q}(\II)[p+q] &= \Hom_{R}(\hCT_{*}(X_{p},X_{p-1}),\III^{q}).
\end{align}

For the second spectral sequence, we have
\begin{equation}
  E_{1}^{p}E_{0}^{q}(\II)[p+q] = \Hom_{R}(\hHT_{*}(X_{p},X_{p-1}),\III^{q}).
\end{equation}
Corollary~\ref{thm:hHT-orbit-degeneration} now implies that this spectral sequence degenerates,
\(E_{1}^{p}E_{0}^{q}(\II)=E_{\infty}^{p}E_{0}^{q}(\II)\Rightarrow E_{1}^{p}(\II)\).

For the first spectral sequence, we claim that
\begin{equation}
  E_{1}^{p}E_{0}^{q}(\DD)[p+q] = \begin{cases}
    \Hom_{R}(\hHT_{*}(X_{p},X_{p-1}),M^{p}[p]) & \text{if \(p=q\),} \\
    0 & \text{otherwise.}
  \end{cases}
\end{equation}
The claim is trivial for~\(p<q\), and
Lemma~\ref{X-Xp-Iq-zero} proves it for~\(p>q\).
For the case~\(p=q\), we filter \(\Hom_{R}(\hHT_{*}(X_{p},X_{p-1}),\III^{\ge p})\)
by \(I\)-degree.
This leads to an intermediate spectral sequence
whose \(E_{1}\)~page
\begin{equation}
  E_{1}^{s} =
  \begin{cases}
    \Hom_{R}(\hHT_{*}(X_{p},X_{p-1}),\III^{s}) & \text{if \(s\ge p\),} \\
    0 & \text{otherwise}
  \end{cases}
\end{equation}
can in fact be written
in the first form
for all~\(s\) by Lemma~\ref{X-Xp-Iq-zero}.
Hence, the claim follows from Lemma~\ref{thm:Ext-Hom-M},
and the original spectral sequence 
degenerates at the \(E_{1}\)~level, too:
\(E_{1}^{p}E_{0}^{q}(\DD)=E_{\infty}^{p}E_{0}^{q}(\DD)\Rightarrow E_{1}^{q}(\DD)\).

So far, we know the associated graded modules of~\(E_{1}(\DD)\)~and~\(E_{1}(\II)\)
induced by the filtration~\(\OO\). Because the filtration of~\(E_{1}(\DD)\)
is compatible with the differential~\(d^{1}\), it gives rise to another spectral sequence
starting at~\(E_{0}E_{1}(\DD)=E_{\infty}E_{0}(\DD)\) and converging to~\(E_{2}(\DD)\),
and similarly for~\(\II\). The map \(E_{0}^{p}E_{1}(\DD)\to E_{0}^{p}E_{1}(\II)\)
is the quasi-isomorphism
\begin{equation}
  \Hom_{R}(\hHT_{*}(X_{p},X_{p-1}),M^{p}[p]) \to \Hom_{R}(\hHT_{*}(X_{p},X_{p-1}),\III)
\end{equation}
from Lemma~\ref{thm:Ext-Hom-M}.
This forces \(E_{\infty}^{p}E_{1}(\DD)=E_{\infty}^{p}E_{1}(\II)\),
hence \(E_{2}(\DD)=E_{2}(\II)\). In other words, the
spectral sequences for the filtrations \(\DD\)~and~\(\II\)
are isomorphic from the \(E_{2}\)~page on.

\section{Consequences and applications}
\label{sec:applications}

Note that Assumption~\ref{ass:orbit-filtration} remains in force.

\subsection{Immediate consequences}

Theorem~\ref{thm:exthab-ss} implies in particular
that the \(E_{2}\) pages of the two spectral sequences coincide.
Due to its importance, we state this result separately.

\begin{theorem}
  \label{thm:exthab}
  For any~\(j\ge0\), the \(j\)-th cohomology of the (non-augmented) Atiyah--Bredon sequence
  is 
  \begin{equation*}
    H^{j}(\AB^{*}(X)) = \Ext_{R}^{j}(\hHT_{*}(X),R).
  \end{equation*}
  Under this isomorphism, the map~\(\HT^{*}(X)\to H^{0}(\AB^{*}(X))\)
  corresponds to the
  canonical map~\(\HT^{*}(X)\to\Hom_{R}(\hHT_{*}(X),R)\).
\end{theorem}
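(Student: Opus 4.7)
The plan is to derive Theorem~\ref{thm:exthab} as an immediate consequence of Theorem~\ref{thm:exthab-ss} by identifying the $E_{2}$ pages of the two spectral sequences and then matching their edge homomorphisms into the abutment.

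First I would identify the $E_{2}$ page of the orbit filtration spectral sequence. By construction $E_{1}^{p}=\HT^{*}(X_{p},X_{p-1})[-p]$, and the differential $d_{1}$ is the composition of the restriction $\HT^{*}(X_{p},X_{p-1})\to\HT^{*}(X_{p})$ with the connecting map of the triple~$(X_{p+1},X_{p},X_{p-1})$. This is exactly the differential in $\AB^{*}(X)$, so $E_{2}^{p}=H^{p}(\AB^{*}(X))$. Next, by the computation in the proof of Proposition~\ref{thm:uct} (which is precisely the spectral sequence attached to the filtration $\II$ in the proof of Theorem~\ref{thm:exthab-ss}), the $E_{2}$ page of the universal coefficient spectral sequence is $\Ext_{R}^{p}(\hHT_{*}(X),R)$, using that each $I^{p}$ is injective. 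Applying Theorem~\ref{thm:exthab-ss} then delivers a natural isomorphism $H^{j}(\AB^{*}(X))\cong\Ext_{R}^{j}(\hHT_{*}(X),R)$ for every $j\ge0$.

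For the second assertion, both maps $\HT^{*}(X)\to H^{0}(\AB^{*}(X))$ and $\HT^{*}(X)\to\Hom_{R}(\hHT_{*}(X),R)$ arise as edge homomorphisms: namely, the compositions of the quotient $\HT^{*}(X)\twoheadrightarrow E_{\infty}^{0}$ with the inclusion $E_{\infty}^{0}\hookrightarrow E_{2}^{0}$ in each of the two spectral sequences. The isomorphism of Theorem~\ref{thm:exthab-ss} is produced at the chain level by the filtration-preserving quasi-isomorphism $\CT^{*}(X)\to\Hom_{R}(\hCT_{*}(X),\III)$ of Lemma~\ref{quiso-Hom-R-I}, compared via the diagonal filtration $\DD$. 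Since edge homomorphisms are preserved by morphisms of filtered complexes that induce isomorphisms from $E_{2}$ onward, the two edge maps correspond under our isomorphism. To finish, one must recognize that the edge map into $\Ext_{R}^{0}(\hHT_{*}(X),R)=\Hom_{R}(\hHT_{*}(X),R)$ is the canonical evaluation: this follows because it is induced at the chain level by the tautological $R$-bilinear pairing $\CT^{*}(X)\otimes\hCT_{*}(X)\to R$ witnessing $\hCT_{*}(X)$ as the $R$-dual of $\CT^{*}(X)$.

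The main obstacle is the edge-map identification rather than the abstract isomorphism. Matching $E_{2}$ pages is essentially bookkeeping, but to confirm that the zigzag of quasi-isomorphisms through $\DD$, $\OO$, and $\II$ sends the restriction-type map $\HT^{*}(X)\to H^{0}(\AB^{*}(X))$ to the canonical evaluation, one has to trace through the chain-level construction carefully, ensuring that the $E_{\infty}^{0}$ contributions agree and that the zero-column edge sees the tautological pairing and not some twist.
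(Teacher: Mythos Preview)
Your proposal is correct and follows the same route as the paper: the paper states Theorem~\ref{thm:exthab} as an immediate consequence of Theorem~\ref{thm:exthab-ss}, with the preceding sentence noting only that ``the \(E_{2}\)~pages of the two spectral sequences coincide.'' You actually supply more detail than the paper does, particularly in tracking the edge homomorphisms through the zigzag \(\OO\leftarrow\DD\to\II\) and identifying the \(\II\)-edge with the canonical evaluation; the paper leaves this identification implicit.
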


\begin{remark}
  It is crucial for Theorem~\ref{thm:exthab}
  that the \(T\)-space~\(X\) satisfies
  Assumption~\ref{ass:orbit-filtration}.
  For example, the conclusion does not hold
  if one replaces an allowed \(T\)-space~\(X\)
  by~\(ET\times X\) 
  unless \(T\) acts locally freely on~\(X\).
\end{remark}

\begin{corollary}
  \label{thm:HT-dual-to-hHT}
  The Chang--Skjelbred sequence~\eqref{eq:chang-skjelbred}
  is exact if and only if the canonical map
  \( 
    \HT^{*}(X) \to \Hom_{R}(\hHT_{*}(X),R)
  \) 
  is an isomorphism.
\end{corollary}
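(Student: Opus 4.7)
The plan is essentially to unwind Theorem~\ref{thm:exthab} in the case $j=0$, so the real content has already been supplied by the comparison of spectral sequences.

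First, I would reformulate exactness of the Chang--Skjelbred sequence in a way that makes $H^{0}(\AB^{*}(X))$ appear explicitly. By definition, $H^{0}(\AB^{*}(X))$ is the kernel of the leftmost differential $\HT^{*}(X_{0}) \to \HT^{*+1}(X_{1},X_{0})$. The composite $\HT^{*}(X) \to \HT^{*}(X_{0}) \to \HT^{*+1}(X_{1},X_{0})$ vanishes: this is built into the $E_{1}$~page of the orbit filtration spectral sequence, of which the augmented Atiyah--Bredon sequence is the edge. Hence the restriction map factors through a canonical map
\begin{equation*}
  \HT^{*}(X) \to H^{0}(\AB^{*}(X)).
\end{equation*}
Exactness of the Chang--Skjelbred sequence at $\HT^{*}(X)$ is injectivity of $\HT^{*}(X)\to\HT^{*}(X_{0})$, and exactness at $\HT^{*}(X_{0})$ is surjectivity of $\HT^{*}(X)$ onto this kernel. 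Combining these two conditions shows that the Chang--Skjelbred sequence is exact if and only if the displayed map is an isomorphism.

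Next I would invoke Theorem~\ref{thm:exthab} with $j=0$. It provides the identification
\begin{equation*}
  H^{0}(\AB^{*}(X)) = \Ext_{R}^{0}(\hHT_{*}(X),R) = \Hom_{R}(\hHT_{*}(X),R),
\end{equation*}
and, crucially, asserts that under this identification the map $\HT^{*}(X)\to H^{0}(\AB^{*}(X))$ corresponds to the canonical evaluation map $\HT^{*}(X)\to\Hom_{R}(\hHT_{*}(X),R)$. Substituting this into the equivalence established in the first step yields the corollary.

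There is essentially no obstacle here, since all the work sits inside Theorem~\ref{thm:exthab}. The only point that is worth writing out carefully, and which might be called the one non-routine detail, is the second sentence of Theorem~\ref{thm:exthab}: one needs the identification of $H^{0}(\AB^{*}(X))$ with $\Hom_{R}(\hHT_{*}(X),R)$ to be the canonical evaluation map rather than some abstract isomorphism. This naturality is guaranteed by the construction in Section~\ref{sec:comparison-ss}, where the edge map $\HT^{*}(X)\to E_{2}^{0}(\II)=\Hom_{R}(\hHT_{*}(X),R)$ of the universal coefficient spectral sequence is the canonical one by definition, and the comparison maps between the filtrations $\DD$, $\OO$, $\II$ are compatible with the augmentation from $\HT^{*}(X)$.
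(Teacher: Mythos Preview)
Your proposal is correct and is exactly the unwinding the paper intends: the corollary is stated without proof immediately after Theorem~\ref{thm:exthab}, as a direct consequence of the case~\(j=0\) together with the identification of the augmentation with the canonical map. Your explicit remark that one needs the second sentence of Theorem~\ref{thm:exthab} (the compatibility with the canonical map, not just an abstract isomorphism) is spot on and is precisely why that sentence is included in the theorem.
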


\begin{corollary}
  \label{thm:quasi-iso-same-HAB}
  Let \(X\)~and~\(X'\) be two \(T\)-spaces having isomorphic equivariant homology.
  Then \(H^{*}(\AB^{*}(X))\cong H^{*}(\AB^{*}(X'))\).
\end{corollary}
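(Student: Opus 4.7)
The plan is to deduce this corollary directly from Theorem~\ref{thm:exthab}, which is the main computational tool just established. That theorem identifies the cohomology of the (non-augmented) Atiyah--Bredon complex of any admissible \(T\)-space~\(Y\) with a purely algebraic invariant of its equivariant homology, namely
\begin{equation*}
    H^{j}(\AB^{*}(Y)) \cong \Ext_{R}^{j}(\hHT_{*}(Y),R).
\end{equation*}
The right-hand side depends only on the isomorphism class of \(\hHT_{*}(Y)\) as a graded \(R\)-module, since \(\Ext_{R}^{j}(-,R)\) is a functor on graded \(R\)-modules.

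Given this, the argument has essentially one line. Suppose \(\varphi\colon\hHT_{*}(X)\to\hHT_{*}(X')\) is an isomorphism of graded \(R\)-modules. Applying \(\Ext_{R}^{j}(-,R)\) yields an isomorphism
\begin{equation*}
    \Ext_{R}^{j}(\hHT_{*}(X'),R) \xrightarrow{\;\varphi^{*}\;} \Ext_{R}^{j}(\hHT_{*}(X),R)
\end{equation*}
for each \(j\ge 0\). Composing with the identifications of Theorem~\ref{thm:exthab} applied to \(X\) and to \(X'\) produces the desired isomorphism \(H^{*}(\AB^{*}(X'))\cong H^{*}(\AB^{*}(X))\).

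There is essentially no obstacle here: the entire content of the corollary is already packaged in Theorem~\ref{thm:exthab}. The only point worth flagging is that the hypothesis should be read as an isomorphism of graded \(R\)-modules (not merely an abstract isomorphism of graded \(\kk\)-vector spaces), which is the natural notion given that \(\hHT_{*}(-)\) is defined as a graded \(R\)-module; under this reading the proof reduces to functoriality of \(\Ext_{R}^{*}(-,R)\).
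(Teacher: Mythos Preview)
Your proof is correct and matches the paper's approach: the paper does not give an explicit proof for this corollary, treating it as an immediate consequence of Theorem~\ref{thm:exthab}, which is exactly what you do. Your observation that the isomorphism must be one of graded \(R\)-modules is well taken and consistent with the paper's conventions.
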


Note that Corollary~\ref{thm:quasi-iso-same-HAB} applies
if there exists an equivariant map~\(X\to X'\) inducing an
isomorphism in non-equivariant cohomology, hence also
in equivariant homology by Remark~\ref{rem:finiteness}.

\begin{remark}
  \label{rem:local-duality}
  \def\HH{\mathcal{H}^{T}}
  Let \(M\) be a finitely generated \(R\)-module and
  \(H_{\m}^{*}(M)\) its local cohomology
  with respect to the maximal ideal~\(\m\lhd R\).
  Local duality in this case refers to an isomorphism of \(R\)-modules
  \begin{equation}
    \label{eq:local-duality}
    H_{\m}^{r-j}(M) = \Hom_{\kk }\bigl(\Ext_{R}^{j}(M,R[r]), \kk \bigr),
  \end{equation}
  \cf~\cite[Thm.~A1.9]{Eisenbud:2005}.
  
  Let \(\HH_{*}(A,B)=H_{*}(A_{T},B_{T})\) denote the \emph{homology}
  of the Borel construction of the \(T\)-pair~\((A,B)\) and
  \(H_{j}(\AB_{*}(X))=\Hom_{\kk }(H^{j}(\AB^{*}(X)),\kk )\)
  the homology of the non-augmented homological Atiyah--Bredon-sequence
  \begin{equation}
    \let\longrightarrow\leftarrow
    \HH_*(X_0)
    \longrightarrow \HH_{*+1}(X_1, X_0)
    \longrightarrow \cdots
    \longrightarrow \HH_{*+r}(X_r, X_{r-1}).
  \end{equation}
  Then, by local duality, the isomorphism~\(H^{j}(\AB^{*}(X))=\Ext_{R}^{j}(\hHT_{*}(X),R)\)
  is equivalent to an isomorphism of \(R\)-modules
  \begin{equation}
    H_{j}(\AB_{*}(X)) = H_{\m}^{r-j}(\hHT_{*}(X))[r].
  \end{equation}
\end{remark}

\subsection{Partial exactness}
\label{sec:partial-exactness}

In this section we characterize when
a front piece of the Atiyah--Bredon sequence
\begin{multline}\label{eq:atiyah-bredon-local}
  \let\longrightarrow\rightarrow
  0
  \longrightarrow \HT^{*}(X)
  \longrightarrow \HT^{*}(X_0)
  \longrightarrow \HT^{*}(X_1, X_0)[-1]
  \longrightarrow \\ \cdots
  \let\longrightarrow\rightarrow
  \longrightarrow \HT^{*}(X_r, X_{r-1})[-r]
  \longrightarrow 0
\end{multline}
is exact. We write \(\barAB^{*}(X)\) for this complex of \(R\)-modules
(with \(\barAB^{-1}(X)=\HT^{*}(X)\)),
in contrast to the non-augmented Atiyah--Bredon complex~\(\AB^{*}(X)\)
introduced in~\eqref{eq:atiyah-bredon-truncated}.

\begin{lemma}
  \label{thm:exact-2}
  The Atiyah--Bredon complex~\(\barAB^{*}(X)\) is the \(E_{1}\)~page
  of a spectral sequence converging to~\(0\).
  In particular, if it is exact
  at all but possibly two adjacent terms, then it is exact everywhere.
  The analogous statement holds if one localizes \(\barAB^{*}(X)\)
  with respect to any multiplicative set~\(S\subset R\).
\end{lemma}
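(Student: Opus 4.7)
The plan is to realize the augmented Atiyah--Bredon complex~$\barAB^*(X)$ as the $E_{1}$~page of a spectral sequence of $R$-modules converging to zero. Once this is achieved, the second assertion of the lemma is a formal degeneration argument, and the localized version is immediate because $S^{-1}(-)$ is exact.

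To construct the spectral sequence, let $\tilde{C}^{*}$ denote the mapping cone of the identity map on~$\CT^*(X)$; this is an acyclic complex of $R$-modules, with $\tilde{C}^{*}=\CT^*(X)[-1]\oplus\CT^*(X)$ as a graded $R$-module and the standard cone differential. Filter it by $F^{-1}\tilde{C}^{*}=\tilde{C}^{*}$ and
\begin{equation*}
  F^{p}\tilde{C}^{*}=0\oplus\CT^*(X,X_{p-1})
  \quad\text{for \(0\le p\le r+1\),}
\end{equation*}
so that $F^{0}=0\oplus\CT^*(X)$ and $F^{r+1}=0$. The associated graded pieces are $F^{-1}/F^{0}\cong\CT^*(X)[-1]$ and, via the short exact sequences attached to the triples $(X,X_{p},X_{p-1})$, $F^{p}/F^{p+1}\cong\CT^*(X_{p},X_{p-1})$ for $p\ge 0$. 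Taking cohomology column by column yields $E_{1}^{-1}=\HT^*(X)$ and $E_{1}^{p}=\HT^*(X_{p},X_{p-1})[-p]$ for $0\le p\le r$, matching $\barAB^{*}(X)$ term by term. A direct calculation of the connecting homomorphism of
\begin{equation*}
  0\to F^{0}/F^{1}\to F^{-1}/F^{1}\to F^{-1}/F^{0}\to 0
\end{equation*}
identifies $d_{1}^{-1}$ with the restriction $\HT^*(X)\to\HT^*(X_{0})$, while the differentials $d_{1}^{p}$ for $p\ge 0$ coincide with the usual Atiyah--Bredon differentials. Thus the $E_{1}$~page, as a complex of graded $R$-modules, is precisely~$\barAB^{*}(X)$; since $\tilde{C}^{*}$ is acyclic, the spectral sequence converges to zero.

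For the second assertion, assume $H^{p}(\barAB^{*}(X))=0$ outside some pair $\{i,i+1\}$ of adjacent indices. Then $E_{2}^{p}=0$ except possibly at $p=i$ and $p=i+1$, and for any $r\ge 2$ the differential $d_{r}\colon E_{r}^{p}\to E_{r}^{p+r}$ can be non-zero only if both $p$ and $p+r$ lie in $\{i,i+1\}$, which is impossible for $r\ge 2>1$. Hence $E_{\infty}=E_{2}$; since the abutment vanishes, $E_{2}=0$ everywhere, so $\barAB^{*}(X)$ is in fact exact at $i$ and~$i+1$ as well.

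The localized version is obtained by applying $S^{-1}(-)$ to the entire construction: $S^{-1}\tilde{C}^{*}$ remains acyclic, the filtration and its graded pieces localize compatibly, and the resulting spectral sequence with $E_{1}=S^{-1}\barAB^{*}(X)$ again converges to zero, so the same degeneration argument applies. The only non-formal step in the whole argument is the explicit identification of $d_{1}^{-1}$ with the restriction map, which is essentially a bookkeeping exercise involving the paper's degree-shift convention $M[l]^{l'}=M^{l'-l}$; everything else is routine spectral-sequence manipulation.
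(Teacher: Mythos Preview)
Your proof is correct and follows essentially the same approach as the paper's: both realize $\barAB^{*}(X)$ as the $E_{1}$~page of a spectral sequence with vanishing abutment, then use that higher differentials cannot connect adjacent columns. The paper does this topologically, taking the cone~$Y=CX$ with filtration~$Y_{i}=X\cup CX_{i}$ (so the spectral sequence converges to~$\HT^{*}(Y,*)=0$), whereas you use the algebraic mapping cone of the identity on~$\CT^{*}(X)$; these are two incarnations of the same construction, and your chain-level version avoids having to verify that the cone satisfies the standing assumptions on $T$-spaces.
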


\begin{proof}
  Let \(Y=C X\) be the cone over~\(X\) with apex~\(*\), and
  consider the filtration~\(Y_{i}=X\cup C X_{i}\), \(-1\le i\le r\).
  The complex~\(S^{-1}\barAB^{*}(X)\)
  is, up to degree shift, the
  \(E_{1}\)~page of the spectral sequence associated to this filtration
  and converging to~\(S^{-1}\HT^{*}(Y,*)=0\). Since higher differentials
  cannot connect adjacent columns, non-zero terms in only one or two adjacent
  columns of the \(E_{2}\)~page would lead to~\(E_{\infty}\ne0\), a contradiction.
\end{proof}

\begin{theorem}
  \label{thm:conditions-partial-exactness}
  The following conditions are equivalent for any~\(0\le j\le r\):
  \begin{enumerate}
  \item \label{q1} The Atiyah--Bredon sequence~\eqref{eq:atiyah-bredon-local}
    is exact at all positions~\(-1\le i\le j-2\).
  \item \label{q4} The restriction map~\(\HT^{*}(X)\to H_{T'}^{*}(X)\)
    is surjective for all subtori~\(T'\) of~\(T\) of rank~\(r-j\).
  \item \label{q3} \(\HT^{*}(X)\) is free over all subrings~%
    \(H^{*}(BT'')\subset H^{*}(BT)=R\), where \(T''\) is a quotient of~\(T\) of rank~\(j\).
  \item \label{q2} \(\HT^{*}(X)\) is a \(j\)-th syzygy.
  \end{enumerate}
\end{theorem}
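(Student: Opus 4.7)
My plan is to establish the cycle $(2)\Rightarrow(3)\Rightarrow(4)\Rightarrow(1)\Rightarrow(2)$, each implication using different machinery.

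For $(2)\Rightarrow(3)$: By Proposition~\ref{thm:torsionfree}, a $j$-th syzygy admits every $R$-regular sequence of length at most~$j$ as an $H_T^*(X)$-regular sequence. For a rank-$j$ quotient~$T''$ of~$T$, the degree-$2$ generators of $R''=H^*(BT'')$ are linearly independent in $H^2(BT)$, hence form an $R$-regular sequence that is consequently $H_T^*(X)$-regular. The Koszul criterion for freeness over a graded polynomial subring (noting that $R$ is free over $R''$ and $H_T^*(X)$ is bounded below) then yields freeness of $H_T^*(X)$ over~$R''$.

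For $(3)\Rightarrow(4)$: Use the Serre spectral sequence of the principal $T''$-bundle $X_{T'}\to X_T$ with fiber $T''=T/T'$. Its $E_2$-page $H_T^*(X)\otimes H^*(T'')$ is the Koszul complex whose differential is determined by the transgressions $\xi_i\mapsto t_i$ ($\xi_i$ a generator of $H^1(T'')$, $t_i$ the corresponding generator of $R''\subset R$). Assuming (3), the Koszul complex on $(t_1,\dots,t_j)$ acting on $H_T^*(X)$ is acyclic in positive Koszul degrees, so $E_\infty^{p,q}=0$ for $q>0$, and the edge map $H_T^*(X)\to H_{T'}^*(X)$ is surjective.

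For $(1)\Rightarrow(2)$: Theorem~\ref{thm:exthab} identifies $H^i(\AB^*(X))$ with $\Ext_R^i(\hHT_*(X),R)$ and the edge $H_T^*(X)\to H^0(\AB^*(X))$ with the canonical map $H_T^*(X)\to\Hom_R(\hHT_*(X),R)$. Thus exactness of~\eqref{eq:atiyah-bredon-local} at positions $-1\le i\le j-2$ is equivalent to this canonical map being an isomorphism (for $j\ge 2$; merely injective for $j=1$) together with $\Ext_R^i(\hHT_*(X),R)=0$ for $1\le i\le j-2$. Applying $\Hom_R(-,R)$ to a free resolution $\cdots\to P_1\to P_0\to\hHT_*(X)\to 0$, these vanishings give an exact sequence
\begin{equation*}
0\longrightarrow \Hom_R(\hHT_*(X),R)\longrightarrow P_0^*\longrightarrow P_1^*\longrightarrow\cdots\longrightarrow P_{j-1}^*
\end{equation*}
of finitely generated free $R$-modules, exhibiting $H_T^*(X)\cong\Hom_R(\hHT_*(X),R)$ as a $j$-th syzygy. (The case $j=1$ is the standard identification of torsion-freeness with injectivity into the free module $H_T^*(X_0)$.)

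The main obstacle is $(4)\Rightarrow(1)$: turning the ``pointwise'' surjectivity of (4) into the global exactness of (1). My plan is to localize on $\mathrm{Spec}\,R$. Each prime $\mathfrak{p}$ of height $\le j$ is contained in $\ker(R\to H^*(BT'))$ for some rank-$(r-j)$ subtorus~$T'$, so Lemma~\ref{thm:localization-orbit-filtration} together with the Koszul description of (4) from the second paragraph controls the localization of $\AB^*(X)$ at~$\mathfrak{p}$. A support argument using the dimension bounds in Proposition~\ref{thm:stratum-cm} and Corollary~\ref{thm:Ext-hHT-zero} then upgrades these pointwise vanishings to global vanishing of $H^i(\AB^*(X))$ for $1\le i\le j-2$ and to the required isomorphism $H_T^*(X)\cong\Hom_R(\hHT_*(X),R)$. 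By Theorem~\ref{thm:exthab}, this is (1).
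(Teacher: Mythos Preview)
Your implications $(2)\Rightarrow(3)$, $(3)\Rightarrow(4)$, and $(1)\Rightarrow(2)$ are fine and essentially match the paper's arguments (the paper does $(3)\Leftrightarrow(4)$ via the identity $\HT^{*}(X)=H_{T/T'}^{*}(X_{T'})$ and Remark~\ref{rem:consequences-Serre}, which is the same Koszul/Serre argument you describe).

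The gap is in $(4)\Rightarrow(1)$. Your key claim, that every prime~\(\mathfrak p\) of height~\(\le j\) is contained in~\(\ker(R\to H^{*}(BT'))\) for some rank-\((r-j)\) subtorus~\(T'\), is false: for \(r=2\), \(j=1\), the prime~\((t_{1}^{2}+t_{2}^{2})\) is not contained in any ideal~\((at_{1}+bt_{2})\) with \(a,b\in\Q\). More fundamentally, the paper's own Remark after Theorem~\ref{thm:conditions-partial-exactness} observes that $(3)\Rightarrow(2)$ is \emph{not} a purely algebraic fact: \(R/(f)\) for irreducible~\(f\) of degree~\(\ge2\) is free over every~\(\kk[a]\) but has torsion. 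So any correct proof of $(4)\Rightarrow(1)$ must exploit the specific topological structure---namely that only finitely many isotropy subtori occur and the corresponding ideals~\(J_{x}\) are generated by \emph{linear} forms. Your ``support argument'' sketch does not invoke this, and localizing at arbitrary primes cannot by itself close the gap.

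The paper's route for the hard implication (stated as $(3)\Rightarrow(1)$) is quite different from your localization-at-primes plan: it is an induction on~\(j\), simultaneously over all tori and all spaces. One picks a generic rational linear form~\(a\in H^{2}(BT)\) transverse to all the (finitely many, linear) \(M_{x}\) with \(x\in X_{j-1}\); then the codimension-one subtorus~\(K\) with \(H^{*}(BK)=R/a\) has the same orbit filtration as~\(T\) up through dimension~\(j-2\), and \(\barAB_{K}^{i}(X)=\barAB^{i}(X)\otimes_{\kk[a]}\kk\) in that range. The inductive hypothesis for~\(K\) together with a \(\Tor^{\kk[a]}\) universal-coefficient argument shows~\(a\) is a non-zero-divisor on~\(H^{j-2}(\barAB^{*}(X))\); combined with a localization that kills everything beyond position~\(j-1\) and Lemma~\ref{thm:exact-2}, this forces \(H^{j-2}(\barAB^{*}(X))=0\). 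The finiteness and linearity of the~\(M_{x}\) are exactly what allow the transversal choice of~\(a\).
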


Recall that the entire Atiyah--Bredon sequence is exact
if condition~\eqref{q1} holds for \(j=r\) (Lemma~\ref{thm:exact-2}), and that
an 
\(R\)-module is an \(r\)-th syzygy if and only if it free over~\(R\).
Theorem~\ref{thm:conditions-partial-exactness} therefore
contains the Atiyah--Bredon result~\cite[Main Lemma]{Bredon:1974}
and its converse as special cases.

\begin{remark}
We know from Corollary~\ref{thm:HT-dual-to-hHT}
that the Atiyah--Bredon sequence
is exact at the first two positions 
if and only if the canonical map
\( 
  \HT^{*}(X) \to \Hom_{R}(\hHT_{*}(X),R)
\) 
is an isomorphism. By condition~\eqref{q2} above this is equivalent
to \(\HT^{*}(X)\) being a second syzygy, \ie, a reflexive \(R\)-module.
If this holds, then \(\HT^{*}(X)\) is a \(j\)-th syzygy for~\(j\ge3\) if and only
if
\begin{equation}
    H^{i}(\AB^{*}(X)) = \Ext_{R}^{i}(\hHT_{*}(X),R) = 0
\end{equation}
for all~\(1\le i\le j-2\). Note that this is not simply a reformulation
of condition~\eqref{tf3}
in Proposition~\ref{thm:torsionfree} because
\(\hHT_{*}(X)\) may not be the dual of~\(\HT^{*}(X)\)
even if this is true the other way around.
See Remark~\ref{rem:hHT-not-dual-to-HT} for an example.

Also observe that
the implication~\(\hbox{\eqref{q3}}\Rightarrow\hbox{\eqref{q2}}\) is not a purely algebraic fact:
For \(j=1\)  and \(f\in R\) not a product of linear polynomials,
the \(R\)-module~\(R/(f)\) is free over~\(\kk[a]\) for all \(0\ne a\in H^{2}(BT)\),
but it obviously has \(R\)-torsion.
\end{remark}

\begin{proof}
  For~\(j=0\) all conditions are true.
  
  \(\hbox{\eqref{q1}}\Rightarrow\hbox{\eqref{q2}}\):
  If \(j=1\), then the sequence~\(0\to\HT^{*}(X)\to\HT^{*}(X_{0})\) is exact,
  which means that \(\HT^{*}(X)\) is a first syzygy since \(\HT^{*}(X_0)\) is a free \(R\)-module.
  For~\(j\ge2\), consider a finitely generated free resolution
  \begin{equation*}
    F_{j-1} \to\dots\to F_0 \to \hHT_{*}(X) \to 0
  \end{equation*}
  Our assumption implies
  \(\Hom_R(\hHT_{*}(X),R) = \HT^{*}(X)\) and
  \(\Ext_R^i(\hHT_{*}(X),R)=0\) for~\(1\le i\le j-2\)
  by Theorem~\ref{thm:exthab}. Hence the sequence
  \begin{equation*}
    0 \to \HT^{*}(X) \to \Hom_R(F_0,R) \to\dots\to \Hom_R(F_{j-1},R)
  \end{equation*}
  is exact, exhibiting \(\HT^{*}(X)\) as a \(j\)-th syzygy.

  \(\hbox{\eqref{q2}}\Rightarrow\hbox{\eqref{q3}}\):
  If \(\HT^{*}(X)\) is finitely-generated over~\(H^{*}(BT'')\),
  this is a special case
  of the implication~\(\hbox{\eqref{tf1}}\Rightarrow\hbox{\eqref{tf2}}\)
  in Proposition~\ref{thm:torsionfree}.
  Alternatively, it follows from Hilbert's Syzygy Theorem,
  which shows that it is true for not finitely-generated modules as well.

  \(\hbox{\eqref{q3}}\Leftrightarrow\hbox{\eqref{q4}}\):
  Since \(\HT^{*}(X)=H_{T/T'}^{*}(X_{T'})\), this equivalence reduces to the statement
  that \(\HT^{*}(X)\) is free over~\(R\) if and only if the restriction map \(\HT^{*}(X)\to H^{*}(X)\)
  is surjective, \cf~Remark~\ref{rem:consequences-Serre}.
  
  \(\hbox{\eqref{q3}}\Rightarrow\hbox{\eqref{q1}}\):
  We do induction on~\(j\), for all~\(r\) and~\(X\) simultaneously. 
  By induction, we can assume \(H^{i}(\barAB^{*}(X))=0\) for all~\(i<j-2\).

  Choose a rational subspace~\(L\subset H^{2}(BT)\) of dimension~\(r-j+1\)
  transverse to \(M_{x}=\ker \bigl( H^{2}(BT)\to H^{2}(BT_x)\bigr)\) for all~\(x\in X_{j-1}\).
  (``Rational'' means that it has a basis lying in~\(H^{2}(BT;\Q)\).
  Such an~\(L\) exists because only finitely many tori~\(T_{x}^{0}\) occur,
  and \(M_{x}\) has dimension~\(\leq j-1\) for all~\(x\in X_{j-1}\).)
  Let \(S\subset R\) be the multiplicative set generated by~\(\tilde{L}=\bigl(L\cap H^{2}(BT;\Q)\bigr)\setminus\{0\}\).
  Since \(\dim M_{x}\ge j\)
  for all~\(x\notin X_{j-1}\),
  Lemma~\ref{thm:localization-orbit-filtration}\,\eqref{gg2} implies
  \(S^{-1}\HT^{*}(X_{i}, X_{i-1}) = 0\)  for~\(i\ge j\)
  and therefore \(S^{-1}H^{i}(\barAB^{*}(X))=0\) for all~\(i\notin\{j-2,j-1\}\).
  By Lemma~\ref{thm:exact-2}, this forces \(S^{-1}H^{j-2}(\barAB^{*}(X))=0\), too.
  Thus, it suffices to show that the localization map
  \begin{equation*}
    H^{j-2}(\barAB^{*}(X)) \to S^{-1} H^{j-2}(\barAB^{*}(X))
  \end{equation*}
  is injective, \ie, that no element~\(a\in S\) is a zero-divisor in~\(H^{j-2}(\barAB^{*}(X))\).

  We may assume \(a\in\tilde{L}\).
  Because \(a\) is rational, there is a subtorus~\(K\subset T\) such that \(H^{*}(BK)=R/a\).
  Since \(K\) is of rank~\(r-1\), the filtration degree of any point in~\(X\) decreases by at most~\(1\) as one compares the
  actions of \(T\)~and~\(K\). By our choice of~\(a\) as being transverse to~\(M_{x}\) for all~\(x\in X_{j-1}\),
  the orbit filtrations for~\(T\) and \(K\) coincide up to orbit dimension~\(j-2\).
  By assumption, \(\HT^{*}(X)\) is free over~\(\kk [a]\), hence \(H_{K}^{*}(X)=\HT^{*}(X)/a\).
  By Lemma~\ref{thm:localization-orbit-filtration}\,\eqref{gg1}
  the same holds for all pairs~\((X_{i},X_{i-1})\), \(0\le i\le j-2\), instead of~\(X\).
  Hence,  for \(-1\le i\le j-2\) the \(i\)-th term of the Atiyah--Bredon sequence 
  of~\(X\) with respect to~\(K\) is
  \(\barAB_{K}^{i}(X)=\barAB^{i}(X)\otimes_{\kk [a]}\kk \).
  Note that \(H_{K}^{*}(X)\) is free over all subrings \(H^{*}(BK'')\subset H^{*}(BK)\),
  where \(K''\) is a quotient of~\(K\) of rank~\(j-1\).
  We therefore have \(H^{i}(\barAB_{K}^{*}(X))=0\) for all~\(i<j-2\) by induction.
  Hence the middle term vanishes in the short exact sequence
  \begin{multline}
    0
    \longrightarrow
    H^{j-3}(\barAB^{*}(X))\otimes_{\kk[a]}\kk
    \longrightarrow
    H^{j-3}(\barAB_{K}^{*}(X)) \\
    \longrightarrow
    \Tor^{\kk [a]}_{1}(H^{j-2}(\barAB^{*}(X)),\kk )
    \longrightarrow
    0
  \end{multline}
  coming from the universal coefficient theorem,
  and so must do the \(\Tor\)~term.
  This implies that \(a\) is not a zero-divisor in~\(H^{j-2}(\barAB^{*}(X))\).
\end{proof}

\subsection{Poincaré duality spaces}
\label{sec:applications-PD}

Let \(X\) be a PD~space of formal dimension~\(n\), and let
and \(o_{T}\) be its equivariant orientation.
As in Remark~\ref{rem:equiv-PD},
we consider the equivariant Poincaré pairing
\begin{equation}
  \label{eq:PD-pairing}
  \HT^{*}(X)\times\HT^{*}(X)\to R,
  \quad
  (\alpha,\beta) \mapsto
  \pair{\alpha\cup\beta,o_{T}}
\end{equation}
which is (graded) symmetric and of degree~\(-n\). Recall that it 
is \emph{non-degenerate} if the induced morphism of \(R\)-modules
\begin{equation}
  \label{eq:HT-to-HomHTR}
  \HT^{*}(X) \to \Hom_{R}(\HT^{*}(X),R)
\end{equation}
is injective, and \emph{perfect} if \eqref{eq:HT-to-HomHTR}
is an isomorphism (of degree~\(-n\)). Note that non-degeneracy is equivalent
to the perfection of the localized pairing
\begin{equation}
  S^{-1}\HT^{*}(X)\times S^{-1}\HT^{*}(X)\to S^{-1}R,
\end{equation}
where \(S=R\setminus\{0\}\).

Our first observation could alternatively be deduced
from~\cite[Thm.~5.2.5]{AlldayPuppe:1993} or, in the smooth case,
from~\cite[Prop.~C.67]{GuilleminGinzburgKarshon:2002}.

\begin{proposition}
 The equivariant Poincaré pairing~\eqref{eq:PD-pairing} is non-degenerate
 if and only if \(\HT^{*}(X)\) is torsion-free.
\end{proposition}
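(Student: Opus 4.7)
The plan is to reduce non-degeneracy of the equivariant Poincaré pairing to injectivity of the canonical evaluation map \(\psi\colon \HT^{*}(X)\to\Hom_{R}(\hHT_{*}(X),R)\), and then identify the kernel of \(\psi\) with the \(R\)-torsion submodule of~\(\HT^{*}(X)\) via localization.

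First I would use the adjunction between the equivariant cup and cap products, that is, the identity \(\pair{\alpha\cup\beta,o_{T}}=\pair{\alpha,\beta\cap o_{T}}\), which holds on the chain level by construction. This lets me factor the pairing map \(\phi\colon\HT^{*}(X)\to\Hom_{R}(\HT^{*}(X),R)\) as the composite
\begin{equation*}
  \HT^{*}(X) \xrightarrow{\;\psi\;} \Hom_{R}(\hHT_{*}(X),R) \xrightarrow{(\cap o_{T})^{*}} \Hom_{R}(\HT^{*}(X),R),
\end{equation*}
where the second arrow is \(\Hom_{R}(-,R)\) applied to the equivariant Poincaré duality isomorphism of Proposition~\ref{thm:equiv-PD}, hence an isomorphism. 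Consequently, \(\phi\) is injective if and only if \(\psi\) is.

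The next step is to observe that \(\ker\psi\) contains the \(R\)-torsion submodule of~\(\HT^{*}(X)\) trivially, because the target \(\Hom_{R}(\hHT_{*}(X),R)\) is torsion-free (the polynomial ring~\(R\) being an integral domain). For the reverse inclusion, I would localize at \(S=R\setminus\{0\}\) and set \(Q=S^{-1}R\). Since \(\HT^{*}(X)\) is finitely generated and \(Q\) is a field, the localized second universal coefficient spectral sequence from Proposition~\ref{thm:uct} reads \(E_{2}^{p}=\Ext_{Q}^{p}(S^{-1}\hHT_{*}(X),Q)\), which vanishes for \(p\ge 1\), so it collapses and the edge map \(S^{-1}\psi\colon S^{-1}\HT^{*}(X)\to\Hom_{Q}(S^{-1}\hHT_{*}(X),Q)\) is an isomorphism. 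Therefore \(\ker\psi\) is annihilated by localization at~\(S\), i.e.\ it is \(R\)-torsion. Combining with the previous inclusion gives \(\ker\psi = \ker\phi = \mathrm{torsion}\), and the conclusion follows.

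The main obstacle is the first paragraph: making sure that \(\phi\) really does factor through \(\psi\) in the form above, which requires the cup/cap adjunction to descend to the identification of \(\psi\) with the canonical edge map of the UC spectral sequence used in Theorem~\ref{thm:exthab} (this was essentially verified in the proof of Proposition~\ref{thm:uct}, where the canonical inclusion \(\CT^{*}(X)\to\Hom_{R}(\hCT_{*}(X),R)\) is shown to be a quasi-isomorphism). Everything else — the localization step and the collapse of the spectral sequence over a field — is routine.
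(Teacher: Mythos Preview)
Your proof is correct and takes a genuinely different route from the paper's.

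The paper's argument invokes Theorem~\ref{thm:exthab} (the main result) to identify the canonical map \(\psi\colon\HT^{*}(X)\to\Hom_{R}(\hHT_{*}(X),R)\) with the restriction map \(\HT^{*}(X)\to\HT^{*}(X_{0})\) (both landing in \(H^{0}(\AB^{*}(X))\)), and then appeals to the localization theorem (or Theorem~\ref{thm:conditions-partial-exactness}) to conclude that the latter is injective exactly when \(\HT^{*}(X)\) is torsion-free. Your argument bypasses the orbit filtration entirely: after the same Poincaré-duality factorization \(\phi=(\cap o_{T})^{*}\circ\psi\), you identify \(\ker\psi\) with the \(R\)-torsion submodule directly, by localizing the universal coefficient spectral sequence of Proposition~\ref{thm:uct} at \(S=R\setminus\{0\}\) and observing that the higher \(\Ext\)~terms vanish over the field~\(S^{-1}R\). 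What the paper's route buys is a geometric interpretation (the kernel is exactly the kernel of restriction to fixed points), tying the statement into the running theme of the Atiyah--Bredon sequence. What your route buys is independence from the heavy machinery: you use only Propositions~\ref{thm:uct} and~\ref{thm:equiv-PD}, so in particular your argument does not require Assumption~\ref{ass:orbit-filtration} on the orbit structure.
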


\begin{proof}
  By Theorem~\ref{thm:exthab} and Poincaré duality, the map~\eqref{eq:HT-to-HomHTR}
  and the restriction map~\(\HT^{*}(X)\to\HT^{*}(X_{0})\) have the
  same kernel. By Theorem~\ref{thm:conditions-partial-exactness}
  (or the localization theorem),
  the latter map is injective if and only if \(\HT^{*}(X)\) is torsion-free.
\end{proof}

\begin{proposition}
  The following conditions are equivalent:
  \begin{enumerate}
  \item \label{p1} The Chang--Skjelbred sequence~\eqref{eq:chang-skjelbred} is exact.
  \item \label{p3} The  \(R\)-module~\(\HT^{*}(X)\) is reflexive.
  \item \label{p2} The equivariant Poincaré pairing~\eqref{eq:PD-pairing} is perfect.
  \end{enumerate}
  They are also equivalent to the conditions in Theorem~\ref{thm:conditions-partial-exactness} for~\(j=2\).
\end{proposition}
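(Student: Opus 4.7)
The plan is to assemble the three-way equivalence from results already established in the paper. First I would dispatch (1)$\Leftrightarrow$(2) as a direct instance of Theorem~\ref{thm:conditions-partial-exactness}, and then derive (1)$\Leftrightarrow$(3) by showing that the adjoint of the equivariant Poincaré pairing factors as the canonical map from Corollary~\ref{thm:HT-dual-to-hHT} composed with an isomorphism coming from equivariant Poincaré duality. This will also yield the last sentence of the proposition for free.

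For (1)$\Leftrightarrow$(2), invoke Theorem~\ref{thm:conditions-partial-exactness} with $j=2$: condition~\eqref{q1} at $j=2$ asserts exactness of \eqref{eq:atiyah-bredon-local} at the positions $i=-1$ and $i=0$, which is literally the exactness of the Chang--Skjelbred sequence, while condition~\eqref{q2} at $j=2$ is that $\HT^{*}(X)$ be a second syzygy. By Proposition~\ref{thm:torsionfree}\,\eqref{tf3}, second syzygies over $R$ are exactly the reflexive modules. This simultaneously gives the equivalence with the conditions of Theorem~\ref{thm:conditions-partial-exactness} at $j=2$ promised in the last sentence.

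For (1)$\Leftrightarrow$(3), denote by
\(
  \phi\colon \HT^{*}(X) \to \Hom_{R}(\hHT_{*}(X),R),
  \; \alpha \mapsto \bigl(c \mapsto \pair{\alpha,c}\bigr)
\)
the canonical evaluation map. By Corollary~\ref{thm:HT-dual-to-hHT}, (1) is equivalent to $\phi$ being an isomorphism. By Proposition~\ref{thm:equiv-PD}, capping with the equivariant orientation gives a degree $-n$ isomorphism $\psi\colon \HT^{*}(X) \to \hHT_{*}(X)$, whose $R$-dual $\psi^{*}\colon \Hom_{R}(\hHT_{*}(X),R) \to \Hom_{R}(\HT^{*}(X),R)$ is again an isomorphism of degree $-n$. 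A direct computation using the cup/cap adjunction $\pair{\alpha\cup\beta,c}=\pair{\alpha,\beta\cap c}$ shows
\[
  (\psi^{*}\phi(\alpha))(\beta)
  = \phi(\alpha)(\beta\cap o_{T})
  = \pair{\alpha,\beta\cap o_{T}}
  = \pair{\alpha\cup\beta,o_{T}},
\]
so $\psi^{*}\circ\phi$ is precisely the adjoint of the pairing~\eqref{eq:PD-pairing}. Since $\psi^{*}$ is an isomorphism, perfectness of the pairing is equivalent to $\phi$ being an isomorphism, completing the circle.

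No step presents a serious obstacle: the only substantive verification is the factorization $\psi^{*}\circ\phi = \mathrm{adj}(\text{pairing})$, a one-line consequence of the defining adjunction between cup and cap product. The conceptual content of the proposition lies entirely in the preceding results being combined, so this is really a packaging statement rather than a new theorem; the mild bookkeeping to track is only that the degree shifts of $\psi$ and $\psi^{*}$ match the degree $-n$ of the Poincaré pairing.
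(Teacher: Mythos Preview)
Your proof is correct and follows essentially the same route as the paper's: the equivalence \(\eqref{p1}\Leftrightarrow\eqref{p3}\) via Theorem~\ref{thm:conditions-partial-exactness} and the identification of second syzygies with reflexive modules, and \(\eqref{p1}\Leftrightarrow\eqref{p2}\) via Theorem~\ref{thm:exthab} (through its Corollary~\ref{thm:HT-dual-to-hHT}) together with equivariant Poincar\'e duality. The only difference is that you spell out explicitly the factorization \(\psi^{*}\circ\phi\) of the adjoint of the pairing, which the paper leaves implicit.
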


\begin{proof}
  \(\hbox{\eqref{p1}}\Leftrightarrow\hbox{\eqref{p3}}\) is a rephrasing
  of Theorem~\ref{thm:conditions-partial-exactness} because
  reflexive \(R\)-modules are exactly the second syzygies.
  \(\hbox{\eqref{p1}}\Leftrightarrow\hbox{\eqref{p2}}\)
  follows from Theorem~\ref{thm:exthab} and equivariant Poincaré duality.
\end{proof}

\begin{remark}
  \label{rem:ggk}
  Similarly, any \(R\)-linear map~\(\HT^{*}(X)\to R\) is the Poincaré pairing with some class~\(\alpha\in\HT^{*}(X)\)
  if and only if the Chang--Skjelbred (or Atiyah--Bredon) sequence is exact at~\(\HT^{*}(X_{0})\).
  The example given in Section~\ref{sec:mutants} shows that this is not always the case.
  This clarifies a point raised by Guillemin--Ginzburg--Karshon~\cite[App.~C.8.2]{GuilleminGinzburgKarshon:2002}.
\end{remark}

\medbreak

In~\cite[Prop.]{Allday:1985}, Allday proved the following
for a PD~space~\(X\):
If \(\HT^{*}(X)\) has homological dimension \(1\),
then it has \(R\)-torsion.
In particular, if~\(r=2\), \ie, if \(T=S^{1}\times S^{1}\),
then \(\HT^{*}(X)\) is torsion-free if and only if it is free.
In other words, if \(r=2\), then the Atiyah--Bredon sequence for~\(X\)
is exact if and only if it is exact at~\(\HT^{*}(X)\).
This equivalence breaks down for~\(r>2\);
see Section~\ref{sec:mutants} for a counterexample.
The correct generalization of Allday's result is as follows.

\goodbreak

\begin{proposition} \label{thm:generalization-allday} \( \) 
  \begin{enumerate} 
  \item \label{thm:torsion-free-hd-1}
    If \(\HT^{*}(X)\) is a \(j\)-th syzygy and of homological dimension at most~\(j\),
    then it is free over~\(R\).
  \item \label{thm:exactness-PD}
    If \(\HT^{*}(X)\) is a \(j\)-th syzygy for some~\(j\ge r/2\), then it is free over~\(R\). Equiv\-a\-lently,
    if the Atiyah--Bredon sequence
    for~\(X\) is exact at all positions \(i<r/2-1\),
    then \(\HT^{*}(X)\) is free over~\(R\).
  \end{enumerate}
\end{proposition}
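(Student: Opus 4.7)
The plan is to reduce part~\eqref{thm:exactness-PD} to part~\eqref{thm:torsion-free-hd-1}, then prove~\eqref{thm:torsion-free-hd-1} via the universal coefficient spectral sequence together with equivariant Poincaré duality. For the reduction, suppose $M=\HT^{*}(X)$ is a non-zero $j$-th syzygy with $j\ge r/2$. A standard induction using the depth lemma and the defining exact sequence $0\to M\to F^{1}\to\cdots\to F^{j}$ yields $\depth_{R} M\ge j$, so the Auslander--Buchsbaum formula forces the projective dimension of $M$ to be at most $r-j\le r/2\le j$. This places $M$ squarely in the hypotheses of~\eqref{thm:torsion-free-hd-1}.

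For~\eqref{thm:torsion-free-hd-1}, the case $j=0$ is vacuous; assume $j\ge 1$. I first locate the columns where $\Ext_{R}^{p}(M,R)$ can be non-zero. The $j$-th syzygy hypothesis combined with Theorems~\ref{thm:conditions-partial-exactness} and~\ref{thm:exthab} gives $\Ext_{R}^{p}(\hHT_{*}(X),R)=0$ for $1\le p\le j-2$, and equivariant Poincaré duality (Proposition~\ref{thm:equiv-PD}) identifies $\hHT_{*}(X)\cong M$ as $R$-modules up to shift, so $\Ext_{R}^{p}(M,R)=0$ in the same range. The hypothesis on projective dimension further yields $\Ext_{R}^{p}(M,R)=0$ for $p>j$. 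Thus for $p\ge 1$ the only potentially non-zero columns are $p=j-1$ and $p=j$.

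I would then apply the second universal coefficient spectral sequence of Proposition~\ref{thm:uct}, which via the Poincaré duality identification reads $E_{2}^{p}=\Ext_{R}^{p}(M,R)\Rightarrow M$. Non-zero columns are confined to $\{0,j-1,j\}$, so the only possibly non-zero differentials are $d_{j-1}$ and $d_{j}$ emanating from column~$0$; any differential out of columns $j-1$ or $j$ lands in a zero column. For $p\ge 1$ each $\Ext_{R}^{p}(M,R)$ has positive grade and is torsion, hence the filtration piece $F^{1}M\subseteq M$, being an iterated extension of such pieces, is itself torsion. But $M$ is torsion-free (being a first syzygy), so $F^{1}M=0$, and $E_{\infty}^{p}=0$ for $p\ge 1$ while $E_{\infty}^{0}\cong M$.

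To finish, for $j\ge 2$ I use that the edge map $M=E_{\infty}^{0}\hookrightarrow E_{2}^{0}=\Hom_{R}(M,R)$ coincides with the canonical map (Theorems~\ref{thm:exthab} and~\ref{thm:exthab-ss}); it is an isomorphism by Corollary~\ref{thm:HT-dual-to-hHT}, since $M$ being a second syzygy is equivalent to Chang--Skjelbred exactness. Hence $E_{\infty}^{0}=E_{2}^{0}$, which forces $d_{j-1}$ and $d_{j}$ to vanish on column~$0$; but these differentials must also be surjective (so that $E_{\infty}^{j-1}=E_{\infty}^{j}=0$), which forces $\Ext_{R}^{j-1}(M,R)=\Ext_{R}^{j}(M,R)=0$. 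Together with the earlier vanishing, $\Ext_{R}^{p}(M,R)=0$ for all $p\ge 1$, and $M$ is free by Auslander--Buchsbaum. The case $j=1$ is the degenerate two-column version: the spectral sequence collapses at $E_{2}$, yielding a short exact sequence $0\to\Ext_{R}^{1}(M,R)\to M\to\Hom_{R}(M,R)\to 0$ in which the torsion submodule $\Ext_{R}^{1}(M,R)$ of the torsion-free module $M$ must vanish. The main subtlety will be verifying that the universal coefficient edge map genuinely equals the canonical map $M\to\Hom_{R}(M,R)$ after the Poincaré duality identification, which follows from the compatibility statement of Theorem~\ref{thm:exthab} together with the spectral sequence comparison of Theorem~\ref{thm:exthab-ss}.
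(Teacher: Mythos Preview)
Your proof is correct, and the reduction of~\eqref{thm:exactness-PD} to~\eqref{thm:torsion-free-hd-1} via Auslander--Buchsbaum is exactly what the paper does. For~\eqref{thm:torsion-free-hd-1}, however, the paper takes a noticeably shorter route. Using Poincar\'e duality, Theorem~\ref{thm:exthab} and Theorem~\ref{thm:conditions-partial-exactness} just as you do, it observes that the \emph{augmented} Atiyah--Bredon complex~\(\barAB^{*}(X)\) can fail to be exact only at the two adjacent positions \(j-1\) and~\(j\), and then invokes Lemma~\ref{thm:exact-2}: that lemma realizes \(\barAB^{*}(X)\) as the \(E_{1}\)~page of a spectral sequence converging to~\(0\), so two adjacent non-exact positions are impossible, and exactness everywhere gives freeness. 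Your argument instead runs the universal coefficient spectral sequence converging to~\(M\) and argues via torsion-freeness of~\(F^{1}M\) and identification of the edge map with the canonical map; this is valid but requires more bookkeeping. One small imprecision: when \(j=2\) the differential~\(d_{j-1}=d_{1}\) does not exist in an \(E_{2}\) spectral sequence, so column~\(1\) is handled not by ``\(d_{1}\) both zero and surjective'' but simply by \(E_{\infty}^{1}=E_{2}^{1}=0\). The paper's packaging via Lemma~\ref{thm:exact-2} sidesteps both this wrinkle and the separate treatment of \(j=1\).
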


\begin{proof}
  If \(\HT^{*}(X)=\hHT_{*}(X)[n]\) is of homological dimension~\(\le j\), then \(H^{i}(\AB^{*}(X))=\Ext_{R}^{i}(\hHT_{*}(X),R)\)
  vanishes for~\(i>j\). On the other hand, if it is a \(j\)-th syzygy, then \(H^{i}(\AB^{*}(X))=0\) for~\(i\le j-2\)
  by Theorem~\ref{thm:conditions-partial-exactness}. Lemma~\ref{thm:exact-2} now proves the first claim.

  The two hypotheses in the second claim are equivalent by Theorem~\ref{thm:conditions-partial-exactness}.
  They imply that \(\HT^{*}(X)\)
  admits a regular sequence of length~\(j\),
  so that its homological dimension is bounded by~\(r-j\).
  Now use the first part with~\(r-j\le j\).
\end{proof}


\section{Examples}
\label{sec:examples}

\subsection{Non-compact examples}
\label{sec:toric}

In this final section we apply
our results, in particular the criteria
for partial exactness of the Atiyah--Bredon sequence
given in
Theorem~\ref{thm:conditions-partial-exactness}
and Proposition~\ref{thm:generalization-allday}\,\eqref{thm:exactness-PD},
to several orientable smooth manifolds.
By Theorem~\ref{thm:conditions-partial-exactness},
partial exactness is related to syzygies.
The syzygies in our examples will be the ``Koszul syzygies''
discussed in Section~\ref{sec:koszul-resolution}.

Let \(X\) be a \(T\)-space such that \(\HT^{*}(X)\) is not free over~\(R\).
By Theorem~\ref{thm:conditions-partial-exactness} 
and Lemma~\ref{thm:exact-2},
this means that the Atiyah--Bredon sequence must be
non-exact at two non-adjacent positions. In this section
we present, for any~\(r\ge1\), a ``minimally non-exact'' example in the sense that
\begin{equation}
  \label{eq:minimal-non-exact}
  H^{i}(\barAB^{\,*}(X)) \cong
  \begin{cases}
    \kk  & \text{if \(i=r-2\),} \\
    \kk [-1] & \text{if \(i=r\),} \\
    0 & \text{otherwise,}
  \end{cases}
\end{equation}
where \(\barAB^{\,*}(X)\) denotes the (augmented) Atiyah--Bredon sequence~\eqref{eq:atiyah-bredon}.
In particular, \(\HT^{*}(X)\) will necessarily be
an \((r-1)\)-st syzygy. 

Our example is
\begin{equation}
  X=(\CP^{1})^{r}\setminus\{(N,\dots,N),(S,\dots,S)\},
\end{equation}
where \(N\)~and~\(S\) are
the two fixed points for the standard action of~\(S^{1}\) on~\(\CP^{1}=S^{2}\),
and \(T=(S^{1})^{r}\) acts on~\(X\) by the restriction of the componentwise action.

Both \(X\)~and~\(Y=(\CP^{1})^{r}\) are smooth toric varieties,
hence their equivariant cohomology can be described as
Stanley--Reisner rings, see~\cite[Thm.~8]{BifetDeConciniProcesi:1990}
or~\cite[Prop.~1.3 \&~2.2]{Brion:1996}.
The \(\kk\)-algebra~\(\HT^{*}(Y)\) is generated
by degree~\(2\) elements \(u_{i}\)~and~\(v_{i}\) subject to the
relations~\(u_{i}v_{i}=0\) for~\(1\le i\le r\). It is an \(R\)-module
via the map of algebras~\(R\to\HT^{*}(Y)\) sending \(t_{i}\) to~\(u_{i}-v_{i}\).
Moreover, \(\HT^{*}(X)\) is isomorphic to the quotient of~\(\HT^{*}(Y)\)
by the ideal (or, equivalently, submodule) generated by
\begin{equation}
  U = u_{1}\cdots u_{r}
  \qquad\text{and}\qquad
  V = v_{1}\cdots v_{r} = \prod_{i=1}^{r}(u_{i}-t_{i}).
\end{equation}

In what follows, isomorphisms refer to isomorphisms of \(R\)-modules; product structures are not considered.
We will use the isomorphism 
\begin{equation}
  \HT^{*}(Y) = \bigoplus_{I\subset[r]} R\,u_{I},
\end{equation}
where \([r]=\{1,\dots,r\}\) and \(u_{I}\) is the product of the~\(u_{i}\) with~\(i\in I\).
Let \(N\) be the submodule spanned by the \(u_{I}\) with~\(|I|\le r-2\),
and let \(N'\) be its isomorphic image in~\(\HT^{*}(Y)/(U,V)=\HT^{*}(X)\).
Then \(\HT^{*}(X)/N'\) is isomorphic to the quotient of
\begin{equation}
  \label{iso-U-N}
  \HT^{*}(Y)\bigm/\bigl((U)+N\bigr) \cong \bigoplus_{i=1}^{r} R\,u_{[r]\setminus\{i\}}
\end{equation}
by the image of~\(V\). Under the isomorphism~\eqref{iso-U-N},
\(V\) corresponds to the element
\begin{equation}
  -\sum_{i=1}^{r} t_{i}\,u_{[r]\setminus\{i\}},
\end{equation}
which also generates the image of the differential~\(\delta_{r}\)
in the Koszul resolution~\eqref{eq:koszul-resolution}.
We therefore have established a short exact sequence
\begin{equation}
  \label{eq:short-exact-HTX}
  0 \longrightarrow
  \bigoplus_{i=0}^{r-2} R[2i]^{\binom{r}{i}} \longrightarrow
  \HT^{*}(X) \longrightarrow
  K_{r-1}[2(r-1)] \longrightarrow 0.
\end{equation}
It splits by Lemma~\ref{thm:Ext-Koszul}, and we obtain
\begin{equation}
  \label{eq:toric-example-HT}
  \HT^{*}(X) \cong \bigoplus_{i=0}^{r-2} R[2i]^{\binom{r}{i}} \oplus K_{r-1}[2(r-1)].
\end{equation}
In particular, \(\HT^{*}(X)\) is an \((r-1)\)-st syzygy,
and the Atiyah--Bredon sequence for~\(X\) is exact at all positions~\(i\le r-3\).

We compute the equivariant homology of~\(X\) via the universal coefficient spectral sequence
and Lemma~\ref{thm:Ext-Koszul}. We get 
\begin{equation}
  \Ext_{R}^{j}(\HT^{*}(X),R) \cong
  \begin{cases}
    \bigoplus_{i=0}^{r-2} R[-2i]^{\binom{r}{i}} \oplus K_{2}[-2(r-2)] & \text{if \(j = 0\),} \\
    \kk[-2r]  & \text{if \(j = 1\),} \\
    0 & \text{otherwise.}
  \end{cases}
\end{equation}
Hence, no extension problem arises and
\begin{equation}
  \label{eq:toric-example-hHT}
  \hHT_{*}(X) \cong \bigoplus_{i=0}^{r-2} R[-2i]^{\binom{r}{i}} \oplus K_{2}[-2(r-2)] \oplus \kk [1-2r].
\end{equation}
To confirm \eqref{eq:minimal-non-exact} for the remaining cases~\(r-2\le j\le r\),
we invoke Lemma~\ref{thm:Ext-Koszul} again and obtain for~\(r\ge3\)
\begin{equation}
  \Ext_{R}^{j}(\hHT_{*}(X),R) \cong
  \begin{cases}
    \bigoplus_{i=0}^{r-2} R[2i]^{\binom{r}{i}} \oplus K_{r-1}[2(r-1)] & \text{if \(j = 0\),} \\
    \kk  & \text{if \(j = r-2\),} \\
    \kk [-1] & \text{if \(j = r\),} \\
    0 & \text{otherwise.}
  \end{cases}  
\end{equation}
For~\(r=2\) one has
\begin{equation}
  \Ext_{R}^{j}(\hHT_{*}(X),R) \cong
  \begin{cases}
    R^{2} & \text{if \(j = 0\),} \\
    0  & \text{if \(j = 1\),} \\
    \kk [-1] & \text{if \(j = 2\).} \\
  \end{cases}  
\end{equation}
The case~\(i=0\) of~\eqref{eq:minimal-non-exact} follows here from Lemma~\ref{thm:exact-2}.
In the case~\(r=1\) one of course finds
\begin{equation}
  \Ext_{R}^{j}(\hHT_{*}(X),R) \cong
  \begin{cases}
    0  & \text{if \(j = 0\),} \\
    \kk [-1] & \text{if \(j = 1\).} \\
  \end{cases}  
\end{equation}

\smallskip

This example also illustrates the following point:

\begin{remark}
  \label{rem:hHT-not-dual-to-HT}
  Assume that \(\HT^{*}(X)\) is not free.
  Then the equivariant homology~\(\hHT_{*}(X)\) may well have \(R\)-torsion,
  even if \(\HT^{*}(X)\) is ``as close to being free as possible'',
  that is, an \((r-1)\)-st syzygy.
  In particular, that \(\HT^{*}(X)\) is reflexive
  does not imply that \(\hHT_{*}(X)\) is so,
  nor that it is the \(R\)-dual of~\(\HT^{*}(X)\).
\end{remark}

\begin{remark}
  Instead of two fixed points
  one could remove small \(T\)-stable open neighbourhoods of them from~\((S^{2})^{r}\).
  This way one would obtain a smooth manifold~\(Y\) equivariantly homotopy-equivalent to~\(X\)
  which is compact and with boundary instead of non-compact without boundary.
  It follows from Proposition~\ref{thm:generalization-allday}\,\eqref{thm:exactness-PD}
  that there is no Poincaré duality space satisfying
  \eqref{eq:minimal-non-exact} for~\(r\ge2\).
\end{remark}

\subsection{The mutant}
\label{sec:mutants}

Let \(X\) be the \(7\)-dimensional ``mutant'' constructed in~\cite{FranzPuppe:2008}.
This is a compact orientable manifold with a smooth action of the torus~\(T=(S^{1})^{3}\),
homeomorphic to the connected sum of \(3\)~copies of~\(S^{3}\times S^{4}\).
As shown in~\cite{FranzPuppe:2008},
the equivariant cohomology of~\(X\) is the \(R\)-module
\begin{align}
  \label{eq:mutant-HT}
  \HT^{*}(X) &\cong 
  R\oplus\m[1]\oplus R[6]\oplus R[7], \\
\intertext{which is torsion-free, but not free.
By Poincaré duality, one gets}
  \hHT_{*}(X) &\cong R\oplus R[-1]\oplus\m[-6]\oplus R[-7],
\end{align}
hence
\begin{equation}
  \label{eq:mutant-hHT}
  \Ext_{R}^{j}(\hHT_{*}(X),R) \cong
  \begin{cases}
    R\oplus R[1]\oplus R[6]\oplus R[7] & \text{if \(j = 0\),} \\
    \kk  & \text{if \(j = 2\),} \\
    0 & \text{otherwise.}
  \end{cases}
\end{equation}
As \(\HT^{*}(X)\) is not free over~\(R\), the Atiyah--Bredon sequence cannot be exact.
In fact, one finds
that its cohomology is
\begin{equation}
  H^{i}(\barAB^{*}(X)) \cong
  \begin{cases}
    \kk [1] & \text{if \(i = 0\),} \\
    \kk  & \text{if \(i = 2\),} \\
    0 & \text{otherwise,}
  \end{cases}
\end{equation}
which matches \eqref{eq:mutant-hHT}~and~\eqref{eq:mutant-HT}.

\end{document}